\documentclass{amsart}
\usepackage{amssymb}

\newtheorem{thm}{Theorem}[section]
\newtheorem{lem}[thm]{Lemma}
\newtheorem{prp}[thm]{Proposition}

\theoremstyle{definition}

\newtheorem{rmk}{Remark}[section]
\numberwithin{equation}{section}

 \DeclareMathOperator{\RE}{Re}

  \newcommand{\tri}{\triangle}
 \newcommand{\eps}{\varepsilon}
 \newcommand{\vpi}{\varphi}
 \newcommand{\p}{\mathcal{P}}
 
 \newcommand{\D}{\mathcal{D}}
 \newcommand{\Real}{\mathbb{R}}
 
 \newcommand{\Natural}{\mathbb{N}}
 \newcommand{\nrho}{\inner{\frac{N}{\rho}}}
 \newcommand{\norm}[1]{\left\Vert#1\right\Vert}
 \newcommand{\bignorm}[1]{\big\Vert#1\big\Vert}
 \newcommand{\abs}[1]{\left\vert#1\right\vert}
 \newcommand{\set}[1]{\left\{#1\right\}}
 
 \newcommand{\bigcom}[1]{\big[#1\big]}
 \newcommand{\bigset}[1]{\big\{#1\big\}}
 \newcommand{\inner}[1]{\left(#1\right)}
 \newcommand{\biginner}[1]{\big(#1\big)}
 \newcommand{\Biginner}[1]{\Big(#1\Big)}
 
 \newcommand{\dol}[1]{$#1$}
 \newcommand{\mc}[1]{{\mathcal #1}}

 \newcommand{\bb}[1]{{\mathbb #1}}
 \newcommand{\reff}[1]{(\ref{#1})}

\newcommand\cD{\mathcal D}

\newcommand\cS{{\mathcal S}}

\newcommand\NN{{\mathbb N}}
\newcommand\RR{{{\mathbb R}}}

\begin{document}

\title[The Gevrey hypoellipticity for kinetic equations]
{The Gevrey hypoellipticity for a class\\
of kinetic equations}
\author[H. Chen]{Hua CHEN}
\address{School of Mathematics and Statistics, Wuhan University,
Wuhan 430072, China} \email{chenhua@whu.edu.cn}
\author[W.-X. Li]{Wei-Xi LI}
\address{School of Mathematics and Statistics, Wuhan University,
Wuhan 430072, China} \email{wei-xi.li@whu.edu.cn}
\author[C.-J. Xu]{Chao-Jiang XU}
\address{Universit\'e de Rouen, UMR 6085-CNRS, Math\'ematiques,
Avenue de l'Universit\'e,\,\, BP.12, 76801 Saint Etienne du Rouvray,
France
\newline
and School of mathematics, Wuhan University, 430072, Wuhan, China}
\email{Chao-Jiang.Xu\@@univ-rouen.fr}

\date{2008-03-14\,\,\,  Research supported partially by the NSFC grant 10631020}
\subjclass[2000]{35 B, 35 H, 35 N}

\keywords{Gevrey regularity, kinetic equation, microlocal analysis}

\begin{abstract}
In this paper, we study the Gevrey regularity of weak solutions for
a class of linear and semi-linear kinetic equations, which are the
linear model of spatially inhomogeneous Boltzmann equations without
an angular cutoff.
\end{abstract}

\maketitle

\section{Introduction}\label{sect1}

In this paper, we study  the following  kinetic operator:
\begin{equation}\label{Fokker-Planck}
\mc{P}=\partial_t+v\cdot\partial_x+a(t,x,v)(-\widetilde\triangle_v)^\sigma
,\quad(t,x, v)\in{\Real\times\Real^n\times\bb R^{n}},
\end{equation}
where $ 0<\sigma < 1$,
$v\cdot\partial_x=\Sigma_{j=1}^nv_j\partial_{x_j}$, $a(t,x,v)\in
C^\infty(\bb R^{2n+1})$ and $a(t,x,v)>0$ on
${\Real\times\Real^n\times\bb R^{n}}$, the notation
$(-\widetilde\triangle_v)^\sigma$ denotes the Fourier multiplier of
symbol
$p(\eta)=\bigset{\abs\eta^{\sigma}\omega(\eta)+\abs\eta(1-\omega(\eta))}^2,$
 with $\omega(\eta)\in C^\infty(\Real^n)$, $0\leq\omega\leq 1$.
 Moreover, we have
 $\omega=1$ if $\abs\eta\geq 2$ and $\omega=0$ if $\abs\eta\leq1$.
Throughout the paper, we denote by $\hat u(\tau,\xi,\eta)$ the
Fourier transform of $u$ with respect to the variables $(t,x,v)$.
$\mc{P}$ is not a classical pseudo-differential operator in $\bb
R^{2n+1}$; for the coefficient in the kinetic part is not bounded in
$\bb R^{2n+1}$. When $\sigma=1$, the operator (1.1) is the so-called
Vlasov-Fokker-Planck operator (see \cite{helffer-nier,herau-nier}),
it is then a H\"ormander type operators, and we can apply the Gevrey
hypoellipticity results of M. Derridj and C. Zuily \cite{DZ} and M.
Durand \cite{Dur}, see also [5] for the optimal $G^3$-hypoelliptic
results.

As  is well known, the operator (1.1) is a linear model of the
spatially inhomogeneous Boltzmann equation without an angular cutoff
(cf. \cite{MX}). This is the main motivation for the study of the
regularizing properties of the operator (1.1) in  this paper. In the
past several years,  a lot of progress has been made in the study of
the spatially homogeneous Boltzmann equation without an angular
cutoff, (see \cite{al-1,al-2,desv-wen1,villani} and references
therein), in which the authors have proved that the singularity of
the collision cross-section  yields certain gain on the regularity
for the weak solution of the Cauchy problem in the Sobolev space
frame. That implies that there exists a $C^\infty$ smoothness effect
of the Cauchy problem for the spatially homogeneous Boltzmann
equation without an angular cutoff. The Gevrey regularity of the
local solutions has been constructed in \cite{ukai} for the initial
data having the same Gevrey regularity, and the propagation of
Gevrey regularity is proved recently in \cite{DFT}. In \cite{MUXY1},
the Gevrey smoothness effect of the Cauchy problem has been
established for the spatially homogeneous linear Boltzmann equation.
In \cite{MX2}, they  obtain the ultra-analytical effect results for
the non linear homogeneous Landau equations and inhomogeneous linear
Landau equations.

However, there is no general result for the smoothness effect of the
spatially inhomogeneous problem, which is actually related with the
regularity of the kinetic equation with its diffusion part  a
nonlinear operator in the  velocity variable $v$. Under the
singularity assumption on the collision cross section, the behavior
of the Boltzmann collision operator is similar to a fractional power
of the Laplacian $\inner{-\tri _v}^\sigma.$ In \cite{aumxy}, by
using the uncertainty principle of the micro-local analysis,  the
authors obtained $C^\infty$ regularity for the weak solution of the
linear spatially inhomogeneous Boltzmann equation without an angular
cutoff.

On the other hand, in \cite{MX}, the existence and the $C^\infty$
regularity have been proved for the solutions of the Cauchy problem
for  linear and semi-linear equations associated with the kinetic
operators (\ref{Fokker-Planck}). In this paper, we shall consider
the Gevrey regularity for such  problems.

Let us first recall the definition for the  functions in the Gevery
class. Let $U$ be an open subset of $\mathbb{R}^d$ and $1\leq
s<+\infty$, we say that $f\in G^s(U)$ if $f\in C^\infty(U)$ and for
any compact subset $K$ of $U$, there exists a constant (say Gevrey
constant of $f$) $C=C_K$, depending only on $K$ and $f$, such that
for all multi-indices $\alpha\in\mathbb{N}^d$,
\begin{eqnarray}\label{gevrey}
\|\partial^\alpha{f}\|_{L^\infty(K)} \leq
C_K^{|\alpha|+1}(\alpha!)^s.
\end{eqnarray}
If $W$ is a closed subset of $\bb{R}^d$, $G^s(W)$ denote the
restriction of $G^s(\tilde{W})$ on $W$ where $\tilde{W}$ is an open
neighborhood of $W$. The condition {\rm (\ref{gevrey})} is
equivalent to the following estimate (e.g. see \cite{CR} or
\cite{Ro}):
$$
\|\partial^\alpha{f}\|_{L^2(K)}\leq C_K^{|\alpha|+1}(|\alpha|!)^{s}.
$$

We say that an operator $P$ is $G^s$ hypoelliptic in $U$ if $u\in
\D'(U)$ and $ Pu\in G^s(U)$, then it follows that  $u\in G^s(U).$
Likewise, we say that  the operator $P$ is $C^\infty$ hypoelliptic
in $U$ if $u\in \D'(U)$ and $Pu\in C^\infty(U)$, then it follows
that  $u\in C^\infty(U).$

\vspace{0.5ex}In \cite{MX}, Morimoto-Xu  proved that the operator
(1.1) is $C^\infty$ hypoelliptic if $1/3<\sigma\leq 1$. Our first
main result of this paper is the following:

\begin{thm}\label{th2}
Let $0<\sigma<1$ and
$\delta=\max\set{\frac{\sigma}{4},~\frac{\sigma}{2}-\frac 1 6}$.
Then the operator $\mc{P}$ given by (\ref{Fokker-Planck}) is $G^s$
hypoelliptic in $\mathbb{R}^{2n+1}$ for any $s\geq \frac{2}{\delta}$
, provided the coefficient $ a(t,x,v) \in G^s(\mathbb{R}^{2n+1})$
and $a(t,x,v)>0$.
\end{thm}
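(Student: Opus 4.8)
The plan is to follow the classical microlocal strategy for Gevrey hypoellipticity, combining an a priori subelliptic estimate for $\mc P$ with a Gevrey bootstrap argument based on carefully controlling commutators. First, I would establish the basic subelliptic estimate: there exists $\kappa>0$ (one expects $\kappa=\delta$, matching the exponent in the statement) such that for $u$ supported near a fixed point,
\begin{equation*}
\norm{\com{\Lambda^\kappa_{x}+\Lambda^\kappa_{v}}\,u}_{L^2}^2+\norm{\com{-\widetilde\tri_v}^{\sigma/2}u}_{L^2}^2\leq C\inner{\norm{\mc P u}_{L^2}^2+\norm u_{L^2}^2},
\end{equation*}
where $\Lambda_x,\Lambda_v$ are the standard first-order elliptic operators in the $x$ and $v$ variables respectively. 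The diffusive part $a(-\widetilde\tri_v)^\sigma$ directly yields control of derivatives in $v$; the gain in the $x$ (and $t$) directions is recovered from the transport field $\partial_t+v\cdot\partial_x$ by the commutator $\com{v\cdot\partial_x,\,\partial_v}$, i.e. an uncertainty-principle / averaging lemma argument as in \cite{aumxy}, and the fractional loss there accounts for the $\max\set{\sigma/4,\sigma/2-1/6}$ shape of $\delta$. This step is essentially contained in, or a refinement of, the $C^\infty$-hypoellipticity analysis of \cite{MX}.

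Next, using a dyadic (Littlewood--Paley) decomposition in the frequency variable $\eta$ dual to $v$, I would localize $\mc P$ and reduce matters, on each dyadic shell, to a pseudodifferential operator to which the standard theory applies; on low frequencies everything is trivial, and the operator $\mc P$ is microlocally elliptic away from the characteristic set $\set{\xi=0,\ \eta=0}$, where $v\cdot\partial_x$ already gives ellipticity in $x$ when $\xi\neq0$. Thus it suffices to prove the Gevrey estimate microlocally near $\tau=0,\xi=0,\eta=0$ combined with the degenerate directions, and this is exactly where the subelliptic estimate above is used.

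The heart of the argument is the Gevrey iteration. Fix a cutoff $\vpi\in G^s$ equal to $1$ near the point of interest, and introduce for $N\in\NN$ large the test function $u_N=\vpi\,\phi_N(D)\,u$, where $\phi_N(D)$ is a frequency-truncated version of $(\text{suitable derivative})^{N}$ designed so that $\norm{u}_{G^s}$ is controlled once $\norm{u_N}_{L^2}\leq C^{N+1}(N!)^{s}$ holds uniformly in $N$. Applying the subelliptic estimate to $u_N$, the main term is bounded, and one must estimate the commutator $\com{\mc P,\vpi\phi_N(D)}u$. Three kinds of terms arise: (i) commutators of $\vpi$ and $\phi_N(D)$ with $\partial_t+v\cdot\partial_x$, which are classical and lose one derivative but are compensated by the subelliptic gain after raising $N$; (ii) commutators with the diffusion $a(-\widetilde\tri_v)^\sigma$, for which one must control derivatives of the coefficient $a$ — here the hypothesis $a\in G^s$ is used to absorb the $(\alpha!)^s$ factors coming from Leibniz expansions; and (iii) the interplay between the non-standard symbol class (the coefficient of the kinetic part is unbounded in $(x,v)$) and the truncation $\phi_N(D)$, which requires the careful frequency localization of the previous paragraph. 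The constraint $s\geq 2/\delta$ emerges precisely from balancing the number of extra derivatives one must pay in steps (i)–(ii) against the fractional subelliptic gain $\delta$ per iteration: roughly, each unit loss is recovered at the cost of raising $N$ by $1/\delta$, and two units of loss per iteration (one from the transport commutator, one from the non-local operator bookkeeping) force $s\geq 2/\delta$.

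The main obstacle I expect is controlling the commutator of the non-local diffusion $a(t,x,v)(-\widetilde\tri_v)^{\sigma}$ with the high-power frequency truncation $\phi_N(D)$ while keeping all constants of the Gevrey form $C^{N+1}(N!)^s$, uniformly in $N$. Because $(-\widetilde\tri_v)^\sigma$ is a genuinely nonlocal operator whose symbol grows and whose coefficient is unbounded in the spatial variables, the usual symbolic-calculus remainder estimates must be redone by hand, tracking how the Gevrey constant of $a$ propagates through arbitrarily many commutators; this is the technical crux and is where the explicit value of $\delta$ and the threshold $s=2/\delta$ must be verified rather than merely guessed.
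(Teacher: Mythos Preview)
Your high-level architecture---a subelliptic estimate followed by a Gevrey bootstrap driven by commutator control---is exactly the strategy of the paper. But two parts of your outline diverge from what the paper does, and one of them contains a genuine gap.

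The microlocal reduction you propose is neither in the paper nor correct as stated. The operator $\mc P$ is \emph{not} microlocally elliptic away from $\{\xi=0,\eta=0\}$: the transport symbol $i(\tau+v\cdot\xi)$ vanishes whenever $v\perp\xi$ (and for all $\tau$ on that set when $\xi=0$), while the diffusive part involves only $\eta$, so there is no region of phase space where $\mc P$ is elliptic in any useful sense. The paper simply does not attempt such a reduction: the subelliptic estimate (Proposition~\ref{prp2}) already delivers a uniform gain of $\delta$ in the full Sobolev scale, and the entire argument thereafter stays in physical space. Your Littlewood--Paley step should be dropped.

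The iteration itself is implemented differently. Rather than frequency truncations $\phi_N(D)$, the paper follows Durand~\cite{Dur} and uses a nested family of \emph{physical} cutoffs $\Phi_{\rho,N}$ (Section~\ref{sect+3}) together with ordinary derivatives $D^\alpha$, $|\alpha|=N$, proving by induction on $N$ an estimate of the shape
\[
\norm{\Phi_{\rho,N}D^\alpha u}_{r+n+1}+\norm{\Phi_{\rho,N}\widetilde\Lambda^\sigma D^\alpha u}_{r-\delta/2+n+1}
\leq \frac{L^{|\alpha|-1}}{\rho^{(s+n)(|\alpha|-3)}}\bigl((|\alpha|-3)!\bigr)^{s}\Bigl(\frac{N}{\rho}\Bigr)^{sr}
\]
uniformly in $r\in[0,1]$ and $0<\rho<1$ (Proposition~\ref{prp4}). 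You are right that the commutator with the nonlocal diffusion is the crux, but the paper handles it not by symbolic calculus: Lemma~\ref{+lem9} bounds $[\widetilde\Lambda_v^\sigma,\varphi_{\rho,N}]$ directly from the singular-integral representation $|D_v|^\sigma g(v)=C_\sigma\int\frac{g(v)-g(v-\tilde v)}{|\tilde v|^{n+\sigma}}\,d\tilde v$, giving $\norm{[\widetilde\Lambda_v^\sigma,\varphi_{\rho,N}]f}_\kappa\leq C\{(N/\rho)^\sigma\norm f_\kappa+(N/\rho)^{\kappa+\sigma}\norm f_0\}$ with constants independent of $N$.

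Finally, your ``two units of loss'' heuristic for $s\geq 2/\delta$ is not how the threshold appears. The induction climbs the Sobolev index in steps of $\delta/2$ (Lemmas~\ref{r1/3}--\ref{r1}); at each step the commutator with $X_0=\partial_t+v\cdot\partial_x$ costs one full derivative of the cutoff, producing a factor $(N/\rho)^1$, and the condition allowing this to be absorbed into the running factor $(N/\rho)^{s\delta/2}$ is precisely $s\delta/2\geq 1$, i.e.\ $s\geq 2/\delta$. The fractional-Laplacian commutator contributes only $(N/\rho)^\sigma$, which is weaker.
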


Compared with what is obtained  in \cite{MX}, the result of Theorem
1.1 implies that the operator (1.1) is also $C^\infty$ hypoelliptic
in the case of $0<\sigma\leq 1/3$.

Next, we consider the following semi-linear equation:
\begin{equation}\label{++1.2}
\partial_t u+v \cdot \nabla _x u+ a
 (-\widetilde\triangle_v)^{\sigma}u=F(t, x, v;u)
\end{equation}
where $F$ is a nonlinear function of the real variables $(t, x,
v,q)$. The following is the second main result of the paper, which
implies that the weak solution of equation (1.3) has Gevrey
regularity:

\begin{thm}\label{th3}

Let $0<\sigma<1$ and
$\delta=\max\set{\frac{\sigma}{4},~\frac{\sigma}{2}-\frac 1 6}$.
Suppose that $u\in L_{loc}^{\infty}(\bb R^{2n+1})$ is a weak
solution of Equation (\ref{++1.2}). Then $u\in
G^s(\mathbb{R}^{2n+1})$ for any $s\geq \frac{2}{\delta}$, provided
that the coefficient $ a\in G^s(\mathbb{R}^{2n+1})$, $a(t,x,v)>0$
and the nonlinear function $ F(t, x, v, q)\in
G^s(\mathbb{R}^{2n+2}).$
\end{thm}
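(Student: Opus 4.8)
The plan is to reduce the semi-linear problem to the linear one already covered by Theorem \ref{th2}, via a bootstrap in the Sobolev scale followed by a Gevrey bootstrap. First I would invoke the $C^\infty$ hypoellipticity part: since Theorem \ref{th2} with $s\geq 2/\delta$ in particular gives $C^\infty$ hypoellipticity of $\mc P$, and since $u\in L^\infty_{loc}$ together with the composition $F(t,x,v;u)$ makes the right-hand side locally $L^2$, a standard elliptic-type iteration on the subelliptic estimate for $\mc P$ (the operator gains $2\sigma$ derivatives in $v$ and, through the transport commutator $[\partial_t+v\cdot\partial_x,\ \cdot\,]$, a fractional gain in $(t,x)$) shows $u\in C^\infty_{loc}(\bb R^{2n+1})$. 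At this stage $F(t,x,v;u(t,x,v))$ is a smooth function, being the composition of the $G^s$ (hence $C^\infty$) function $F$ with the smooth map $(t,x,v)\mapsto(t,x,v,u)$.

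Next I would freeze the nonlinearity and view $u$ as a solution of the \emph{linear} equation $\mc P u = f$ with $f(t,x,v):=F(t,x,v;u(t,x,v))\in C^\infty$. The key point is that I cannot yet apply Theorem \ref{th2} directly, because I do not know $f\in G^s$ — that is exactly what the composition with the unknown $u$ obstructs. So the argument must be run as a coupled Gevrey estimate. Fix a compact $K$ and nested cutoffs; the strategy is to establish, by induction on $|\alpha|$, the Gevrey bound $\|\partial^\alpha u\|_{L^2(K)}\leq L^{|\alpha|+1}(|\alpha|!)^s$ for a suitable large constant $L$. For this one applies the quantitative a priori estimates underlying the proof of Theorem \ref{th2} — the same localized subelliptic estimates, microlocal decomposition, and iteration that yield the linear result — but now to $\partial^\alpha u$, picking up on the right-hand side the term $\partial^\alpha f$. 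The derivatives of the composite $f=F(\cdot;u)$ are controlled by the Fa\`a di Bruno / Leibniz formula: $\partial^\alpha\bigl(F(t,x,v;u)\bigr)$ is a sum over partitions of products of derivatives of $F$ (which satisfy the $G^s$ bound with constant $C_F$) and lower-order derivatives of $u$ (to which the inductive hypothesis applies). The combinatorics of Gevrey classes under composition is classical and produces a bound of the form $\tilde C^{|\alpha|+1}(|\alpha|!)^s$ with $\tilde C$ depending on $C_F$, $\|u\|_{L^\infty}$, and the already-controlled lower-order Gevrey constants of $u$; crucially the new constant does not exceed $L$ if $L$ is chosen large enough at the start. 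Absorbing the loss coming from the commutators in the subelliptic estimate (which costs a further bounded-in-$|\alpha|$ factor, using $|\alpha|^{s}\lesssim ((|\alpha|+1)!/|\alpha|!)^s$ and the slack in the definition of $G^s$) closes the induction.

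The main obstacle is the bookkeeping in this coupled induction: one must run the \emph{full} machinery of Theorem \ref{th2}'s proof — the splitting into the region where $(-\widetilde\triangle_v)^\sigma$ is genuinely elliptic and the region exploiting the transport structure, the uncertainty-principle gain of regularity in $(t,x)$, and the precise exponent $\delta=\max\{\sigma/4,\ \sigma/2-1/6\}$ that fixes $s\geq 2/\delta$ — \emph{uniformly} in the number of derivatives $|\alpha|$ being estimated, and simultaneously feed in the Fa\`a di Bruno estimate for $\partial^\alpha f$ without the constants deteriorating geometrically. In practice this is handled by the same device used for quasilinear Gevrey problems: introduce the formal Gevrey norm $\sum_\alpha \frac{\rho^{|\alpha|}}{(|\alpha|!)^{s}}\|\partial^\alpha u\|_{L^2(K)}$, show it satisfies a closed nonlinear inequality for $\rho$ small, and deduce finiteness. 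Once $u\in G^s$ near every point of $\bb R^{2n+1}$, a covering argument gives $u\in G^s(\bb R^{2n+1})$, completing the proof. $\qquad\square$
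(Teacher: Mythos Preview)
Your overall strategy is correct and matches the paper's: first bootstrap $u\in L^\infty_{loc}$ to $u\in C^\infty$ via the subelliptic estimate and Sobolev stability under composition (Proposition~\ref{+smooth}), then run the linear Gevrey induction of Section~\ref{sect3+} with the right-hand side $\partial^\alpha F(\cdot,u(\cdot))$ controlled by Fa\`a di Bruno and the inductive hypothesis. Two small corrections are worth making. First, your description of the linear machinery is off: the paper does not split into elliptic/transport regions or invoke an uncertainty principle; the subelliptic estimate (Proposition~\ref{prp2}) comes from Kohn-type commutator identities (Lemmas~\ref{lem2}--\ref{mul}), and the Gevrey proof (Proposition~\ref{prp4}) is an induction on $N=|\alpha|$ through the family of weighted estimates $(E)_{r,N}$ with the carefully designed cutoffs $\Phi_{\rho,N}$ --- this is the structure that must be re-run verbatim in the nonlinear case (Proposition~\ref{prp4'}). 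Second, the composition step is not quite the ``classical'' $L^\infty$ or $L^2$ Gevrey combinatorics: because the induction lives in the localized Sobolev norms $\|\Phi_{\rho,N}D^\alpha\cdot\|_{\nu+n+1}$, the paper proves a tailored Friedman-type majorant-series lemma (Lemma~\ref{+Friedman}) that bounds $\|\Phi_{\rho,N}D^\alpha[F(\cdot,g(\cdot))]\|_{\nu+n+1}$ in terms of the sequence $\{M_j\}$ encoding the inductive bounds on $g$; this is where the monotonicity condition \eqref{monotonicity} and the algebra property \eqref{4.1} enter, and it replaces your informal appeal to a formal Gevrey norm.
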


\begin{rmk}
Our results here are  local  interior regularity results. This
implies that if there exists a weak solution in $\D'$, then the
solution is in Gevrey class in the interior of the domain. Thus, the
interior regularity of a weak solution does not depend much on the
regularity of the initial Cauchy data. Also, without loss of
generality, we can assume that $c_0^{-1}\leq a(t,x,v)\leq c_0$ for
all $(t, x, v)\in\RR^{2n+1}$ with $c_0$ a positive constant, and all
derivatives of the coefficient $a$ are bounded in $\RR^{2n+1}$.
\end{rmk}

The paper is organized as follows: in section \ref{sect2}, we prove
that $\mc P$ is  subelliptic by using the method of  subelliptic
multiplier developed by J. Kohn [\ref{Kohn}]. Section \ref{sect+3}
is devoted to the study of  the commutator of
$(-\widetilde\triangle_v)^\sigma$ with the cut-off function in the
$v$ variable. In section \ref{sect3+}, we use the subelliptic
estimates to prove the Gevrey hypoellipticity of the operator $\mc
P$. Section \ref{sect4} is devoted to the proof of the Gevrey
regularity for the weak solution of the semilinear kinetic equation
(1.3).


\section{Subelliptic estimates}
\label{sect2} \setcounter{equation}{0}

In this paper, the notation, $\|\cdot\|_\kappa, \kappa\in\mathbb R,$
is used for the classical Sobolev norm in $H^\kappa(\mathbb
R^{2n+1})$, and
 $(f,~g)$ is the inner product of $f, g\in L^2(\bb R^{2n+1})$. Moreover
 if $f,~g\in C_0^\infty(\bb R^{2n+1}),$ it is easy to see that
\begin{eqnarray}\label{Young}
|(f,~g)|\leq \|f\|_\kappa \|g\|_{-\kappa}\leq
\frac{\varepsilon\|f\|_\kappa^2}{2}+\frac{\|g\|_{-\kappa}^2}{2\varepsilon}.
\end{eqnarray}

We have also the interpolation inequality in Sobolev space: For any
$\varepsilon>0$ and $r_1< r_2<r_3,$
\begin{eqnarray}\label{interpolation}
\|f\|_{r_2}\leq \varepsilon
\|f\|_{r_3}+\varepsilon^{-(r_2-r_1)/(r_3-r_2)}\|f\|_{r_1}.
\end{eqnarray}

Let $\Omega$ be an open subset of $\bb R^{2n+1}$ and $S^m(\Omega),
m\in \bb R,$ be the symbol space of the classical
pseudo-differential operators (when there is no risk to cause the
confusion, we will simply write $S^m$ for $S^m(\Omega)$). We say
$P=P(t,x,v,D_t,D_x,D_v)\in{\rm Op}(S^m)$ to be a pseudo-differential
operator of order $m,$ if its symbol $p(t,x,v;\tau,\xi,\eta)\in
S^m.$ If $P\in{\rm Op}(S^m)$, then $P$ is a continuous operator from
$H_{c}^\kappa(\Omega)$ to $H_{loc}^{\kappa-m}(\Omega)$, where
$H_{c}^\kappa(\Omega)$ is the subspace of $H^\kappa(R^{2n+1})$ which
consists of the distributions having their compact support in
$\Omega$. $H_{loc}^{\kappa-m}(\Omega)$ consists of the distributions
$h$ such that $\phi h\in H^{\kappa-m}(\bb R^{2n+1})$ for any
$\phi\in C_0^\infty(\Omega)$. For more details on the
pseudo-differential operators, we refer to Treves [\ref{Treves}].
Observe that if $P_1\in {\rm Op}(S^{m_1})$, $P_2\in {\rm
Op}(S^{m_2})$, then $[P_1,~P_2]\in {\rm Op}(S^{m_1+m_2-1}).$

\bigbreak
 We study now the operator $\mc P$ given by
(\ref{Fokker-Planck}). For simplicity, we introduce the following
notations
$$
{\widetilde\Lambda}^{\sigma}_v=(-\widetilde\triangle_v)^{\frac{\sigma}{2}},
\hspace{0.3cm}X_0=\partial_t+v\cdot\partial_x ,
\hspace{0.3cm}X_j=\partial_{v_j},  j=1, \cdots, n ,
$$
$$
\Lambda^\kappa=(1+|D_{t}|^2+|D_{x}|^2+|D_{v}|^2)^{\kappa/2}.
$$
Then $\mc P$ can be written as
$\mc{P}=X_0+a(t,x,v){\widetilde\Lambda}^{2\sigma}_v,$ and
$\partial_{x_j}=[X_j,~X_0].$ The following simple fact is used
frequently: For any compact $K\subset {\mathbb R^{2n+1}} $ and
$r\geq 0$, there exists $C_{K, r}>0$ such that for any $f\in
C_0^\infty(K),$
\begin{eqnarray}\label{T}
\|{\widetilde\Lambda}^{\sigma}_v f\|_r \leq C_{K, r}\bigset{\|\mc
Pf\|_r+\|f\|_r}.
\end{eqnarray}
In fact, a simple computation gives that
\begin{eqnarray*}
\|{\widetilde\Lambda}^{\sigma}_v f\|_r^2 &=&\RE(\mc Pf,
~a^{-1}\Lambda^{2r}f)-
\RE(X_0f, ~a^{-1}\Lambda^{2r}f)\\
&=&\RE(\mc Pf, ~a^{-1}\Lambda^{2r}f)- {\frac 1 2}(f,~
[a^{-1}\Lambda^{2r},~\widetilde{X}_0]f)-
{\frac 1 2}(f, ~[\Lambda^{2r},~a^{-1}]~\widetilde{X}_0f)\\
&\leq&C_{K, r}\bigset{\|\mc Pf\|_r+\|f\|_r},
\end{eqnarray*}
where $\widetilde{X}_0=\partial_t+\tilde\psi(v) v\cdot\partial_x$
and $\tilde\psi \in C^\infty_0(\RR^n_v)$ is a cutoff function in the
$v$ variable such that $\tilde\psi=1$ in the projection of $K$ on
$\RR^n_v$. Remark that, with the choice of  such a cutoff function,
we have that
$$
\widetilde{X}_0 P(t, x, v, D_t, D_x, D_v) f={X}_0 P(t, x, v, D_t,
D_x, D_v) f
$$
for any $f\in C_0^\infty(K)$ and any partial differential operator
$P(t, x, v, D_t, D_x, D_v)$.

\bigskip

First we show  $\mc P$ is a subelliptic operator on $\RR^{2n+1}$
with a gain of order
$\delta=\max\set{\frac{\sigma}{4},~\frac{\sigma}{2}-\frac 1 6}.$

\begin{prp}\label{prp2}

Let $K$ be a compact subset of $\bb{R}^{2n+1}.$  For any $r\geq 0,$
there exists a constant $C_{K,r},$ depending only on $K$ and $r,$
such that for any $f\in C_0^\infty(K),$
\begin{eqnarray}
\label{sub0} \|f\|_{r+\delta}\leq
C_{K,r}\{~\|\mc{P}f\|_r+\|f\|_0~\},
\end{eqnarray}
where $\delta=\max\set{\frac{\sigma}{4},~\frac{\sigma}{2}-\frac 1
6}.$

\end{prp}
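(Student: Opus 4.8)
The plan is to establish the subelliptic estimate \eqref{sub0} via Kohn's method of subelliptic multipliers, exploiting the commutator structure $\partial_{x_j}=[X_j,X_0]$ together with the $2\sigma$-order gain in the velocity direction provided by the diffusion term. The starting point is the basic energy identity already sketched before the proposition: for $f\in C_0^\infty(K)$ one has, by pairing $\mc Pf$ with $a^{-1}\Lambda^{2r}f$ and using that $a$ is bounded below and all its derivatives are bounded (Remark 1.2),
\begin{eqnarray}\label{basic}
\bignorm{\widetilde\Lambda^{\sigma}_v f}_r^2+\abs{\RE(X_0f,\ \Lambda^{2r}f)}\lesssim \norm{\mc Pf}_r\norm{f}_r+\norm{f}_r^2.
\end{eqnarray}
The first term on the left already yields control of $\norm{f}_{r}$ in the $\eta$-direction with a gain of $\sigma$ (after absorbing the low-frequency part via \eqref{interpolation}); what is missing is a gain in the $(\tau,\xi)$-directions.

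The heart of the argument is to transfer regularity from $v$ to $x$ using $X_0$. First I would show that $\norm{\Lambda^{-1+\sigma}_{(x)}X_0 f}$ (regularity of $X_0f$ measured with one derivative less in $x$) is controlled by the right-hand side of \eqref{sub0}; this comes from writing $X_0f=\mc Pf-a\widetilde\Lambda^{2\sigma}_v f$ and applying \eqref{basic} with the index shifted, noting $\widetilde\Lambda_v^{2\sigma}$ costs $2\sigma$ in $\eta$ but nothing in $(\tau,\xi)$. Then, since $\partial_{x_j}f=[X_j,X_0]f=X_jX_0f-X_0X_jf$ and $X_j=\partial_{v_j}$ is of order $1$ in $\eta$, a careful interpolation between the $v$-regularity from \eqref{basic} and the $X_0$-regularity just obtained gives a gain of order $\delta$ in the $x$-direction: roughly, to produce one $x$-derivative one spends $X_0$ (gaining back $\sigma$ in $v$ at the cost of one $x$-derivative, a net wash) and then must pay in $v$-derivatives, and optimizing the split between the exponent $\sigma/4$ route and the $\sigma/2-1/6$ route yields exactly $\delta=\max\{\sigma/4,\ \sigma/2-1/6\}$. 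The $t$-direction is handled identically since $\partial_t=X_0-v\cdot\partial_x$ and $v\cdot\partial_x$ is already controlled on $K$ by the cutoff trick with $\widetilde X_0$. Combining the $\eta$, $\xi$ and $\tau$ gains and summing gives $\norm{f}_{r+\delta}$ on the left, and one absorbs the resulting $\norm{f}_r$ on the right by \eqref{interpolation} (choosing the interpolation parameter small, at the cost of a larger $\norm{f}_0$ term).

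There are two technical points that require care. The first is that $\mc P$ is not a genuine pseudodifferential operator on $\RR^{2n+1}$ because the coefficient of $\widetilde\Lambda_v^{2\sigma}$ grows linearly in $v$; this is precisely why every estimate is stated for $f$ supported in a fixed compact $K$ and why the cutoff $\widetilde\psi$ is inserted so that $\widetilde X_0 P f=X_0 Pf$ on such $f$ — I would carry this cutoff through all the commutator manipulations. The second, and the main obstacle, is the handling of the commutators $[\Lambda^{2r},a^{-1}]$, $[a^{-1}\Lambda^{2r},\widetilde X_0]$, and especially $[\widetilde\Lambda_v^{2\sigma},\chi]$ for a $v$-cutoff $\chi$: the Fourier multiplier $\widetilde\Lambda_v^{2\sigma}$ has a nonsmooth symbol $p(\eta)^{1/2}$ near $\eta=0$ (it is built from $\abs\eta^\sigma$), so the usual symbol calculus giving $[P_1,P_2]\in\mathrm{Op}(S^{m_1+m_2-1})$ does not apply directly and one needs the refined commutator estimates that Section~\ref{sect+3} is devoted to. Granting those estimates, the bookkeeping above goes through; the delicate balance is entirely in choosing the interpolation exponents so that the loss from commutator terms does not exceed the gain $\delta$, which is why the threshold takes the particular form $\max\{\sigma/4,\ \sigma/2-1/6\}$ rather than simply $\sigma/2$.
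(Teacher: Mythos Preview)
Your high-level strategy is right --- Kohn's multiplier method, the commutator identity $\partial_{x_j}=[X_j,X_0]$, the basic energy estimate \eqref{basic}, and a final interpolation to absorb $\norm{f}_r$ --- and this is indeed what the paper does. But the proposal has a genuine gap at the crucial step and misidentifies the technical obstacles.

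\textbf{The gap.} You never explain \emph{how} the specific value $\delta=\max\{\sigma/4,\,\sigma/2-1/6\}$ arises; the phrase ``optimizing the split between the $\sigma/4$ route and the $\sigma/2-1/6$ route'' is a placeholder, not an argument. The paper makes this precise via two concrete inputs (Lemma~\ref{lem2}, quoted from \cite{MX}): one controls $\norm{\Lambda^{-1/3}X_0 f}_0$ (a gain of $1/3$ on the transport part), the other $\norm{\Lambda^{-1}X_j f}_\sigma$ (a gain of $\sigma$ on the $v$-derivatives). The value of $\delta$ then comes from two different ways of writing the commutator $[X_j,\widetilde X_0]$ modulo a factor of $\Lambda^{\pm1}$: Lemma~\ref{mul} shows $\norm{[X_j,\Lambda^{-1}\widetilde X_0]f}_{\sigma/2-1/6}$ and $\norm{[\Lambda^{-1}X_j,\widetilde X_0]f}_{\sigma/4}$ are both controlled, and one takes whichever gives the larger exponent. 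Your sketch instead claims control of $\norm{\Lambda^{-1+\sigma}_{(x)}X_0 f}$, which is not the right quantity (the $1/3$ is what one actually gets), and leaves the subsequent ``careful interpolation'' entirely unspecified.

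\textbf{Misplaced technical concerns.} The two issues you flag as the main obstacles are not obstacles here. First, the symbol of $\widetilde\Lambda_v^\sigma$ is \emph{smooth}: by construction $p(\eta)=\abs\eta^2$ near $\eta=0$ (since $\omega=0$ there), so $\widetilde\Lambda_v^{2\sigma}\in\mathrm{Op}(S^{2\sigma})$ and the standard calculus $[P_1,P_2]\in\mathrm{Op}(S^{m_1+m_2-1})$ applies throughout the proof of Proposition~\ref{prp2}. Second, the commutator estimates of Section~\ref{sect+3} (for $[\widetilde\Lambda_v^\sigma,\varphi_{\rho,N}]$ with a $v$-cutoff) are \emph{not used} for the subelliptic estimate; they are needed only later for the Gevrey iteration in Section~\ref{sect3+}, where one must track the dependence of constants on the cutoff scale $N/\rho$. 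For Proposition~\ref{prp2} the only cutoff issue is the linear growth of the coefficient $v$ in $X_0$, which is handled exactly as you say with $\widetilde X_0$.
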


In order to prove Proposition \ref{prp2}, we need the following two
lemmas.

\begin{lem}\label{lem2}
Let $K$ be any compact subset of $\bb{R}^{2n+1}.$ Then for any $f\in
C_0^\infty(K),$ we have
\begin{equation}
\label{re2}
\begin{array}{l}
 \|\Lambda^{-1/3}X_0f\|_0 \leq
 C_K(~\|\mc{P}f\|_0+\|f\|_0~),\hspace{2.2cm}
\end{array}
\end{equation}
and
\begin{equation}
\label{re1}
\begin{array}{l}
\|\Lambda^{-1}X_jf\|_\sigma \leq
C_K(~\|\mc{P}f\|_0+\|f\|_0~),\hspace{0.5cm} j=1,\cdots ,n.
\end{array}
\end{equation}
\end{lem}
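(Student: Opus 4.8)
The plan is to derive both estimates from the basic identity already used to prove \reff{T}, namely the computation of $\|\widetilde\Lambda^\sigma_v f\|_0^2$ via integration by parts against $a^{-1}$. For \reff{re1}, observe that since $a$ is bounded below, $\|X_j f\|_0^2 = \|\partial_{v_j}f\|_0^2$ is controlled by $\sum_j\|\partial_{v_j}f\|_0^2$, which in turn is comparable (microlocally, for $|\eta|$ large) to $\|\widetilde\Lambda^\sigma_v f\|_0^2$ only when $\sigma \geq 1$; for $\sigma<1$ we instead use that $|\eta| \leq 1 + |\eta|^{2\sigma}\cdot|\eta|^{1-2\sigma}$... more precisely, I would write $\Lambda^{-1}\partial_{v_j}$ as a pseudodifferential operator of order $0$ and note that $\Lambda^{-1}\partial_{v_j}\widetilde\Lambda^{-\sigma}_v \in \mathrm{Op}(S^{-\sigma})$ away from low frequencies, so that $\|\Lambda^{-1}X_j f\|_\sigma \leq C(\|\widetilde\Lambda^\sigma_v f\|_0 + \|f\|_0)$, and then \reff{T} with $r=0$ finishes it. The point of the weight $\Lambda^{-1}$ together with the gain $\Lambda^\sigma$ is exactly to absorb one full $v$-derivative using the $\sigma$-smoothing in $v$ plus a harmless loss in the other variables.

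For \reff{re2}, the idea is that $\mc P f = X_0 f + a\widetilde\Lambda^{2\sigma}_v f$, so $X_0 f = \mc P f - a\widetilde\Lambda^{2\sigma}_v f$; applying $\Lambda^{-1/3}$ gives $\|\Lambda^{-1/3}X_0 f\|_0 \leq \|\mc Pf\|_0 + \|\Lambda^{-1/3}(a\widetilde\Lambda^{2\sigma}_v f)\|_0$, and the last term is bounded by $C\|\widetilde\Lambda^{2\sigma - 1/3}_v f\|_0$ up to commutators with $a$. So the estimate would follow if $\|\widetilde\Lambda^{2\sigma-1/3}_v f\|_0 \leq C(\|\mc Pf\|_0 + \|f\|_0)$. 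When $2\sigma - 1/3 \leq \sigma$, i.e. $\sigma \leq 1/3$, this is immediate from \reff{T}. For $\sigma > 1/3$ this direct route fails, and one must genuinely exploit the commutator structure: the gain of $1/3$ of a derivative in the $X_0$-direction is the classical Hörmander-type gain coming from the bracket $\partial_{x_j}=[X_j,X_0]$ combined with the $2\sigma$ ellipticity in $v$. I would estimate $\|\Lambda^{-1/3}X_0 f\|_0^2 = \RE(\Lambda^{-2/3}X_0 f, X_0 f)$, substitute $X_0 f = \mc Pf - a\widetilde\Lambda^{2\sigma}_v f$, integrate by parts to move $X_0$ off the second factor, and control the resulting terms — including $(\Lambda^{-2/3}a\widetilde\Lambda^{2\sigma}_v f, \mc Pf - a\widetilde\Lambda^{2\sigma}_v f)$ — using \reff{T}, \reff{Young}, the interpolation inequality \reff{interpolation}, and the order-counting rule $[P_1,P_2]\in\mathrm{Op}(S^{m_1+m_2-1})$ for commutators with $a$ and with $\Lambda^{-2/3}$.

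The main obstacle I anticipate is precisely this borderline bookkeeping for \reff{re2} when $\sigma>1/3$: one needs to show the contribution of the "bad" term $\|\widetilde\Lambda^{2\sigma}_v f\|_0$, which is a priori worse than the $\|\widetilde\Lambda^\sigma_v f\|_0$ supplied by \reff{T}, can nevertheless be tamed because it appears paired against $\Lambda^{-2/3}X_0 f$ — that is, the loss in $v$ is compensated by the weight $\Lambda^{-2/3}$ and the structure of $\mc P$. Keeping careful track of which commutators land in which symbol class, and making sure every absorbed term can be hidden on the left via an $\eps$-small constant from \reff{Young}, is where the real care is required; the rest is routine pseudodifferential calculus.
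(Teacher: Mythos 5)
Your argument for \reff{re1} is correct: since $|\eta_j|\,(1+\tau^2+|\xi|^2+|\eta|^2)^{(\sigma-1)/2}\leq(1+|\eta|^2)^{\sigma/2}$, Plancherel gives $\|\Lambda^{-1}X_jf\|_\sigma\leq C(\|\widetilde\Lambda^\sigma_vf\|_0+\|f\|_0)$, and \reff{T} with $r=0$ finishes; this half uses only the $v$-ellipticity of $\mc P$, not the bracket structure. (Note that the paper itself does not prove the lemma --- it quotes Proposition 3.1 of \cite{MX} --- so there is no in-paper argument to compare against; the assessment below concerns the internal soundness of your sketch.)

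The gap is in \reff{re2} for $\sigma>1/3$, and it sits exactly where you flag it. Your plan is to expand $\RE(\Lambda^{-2/3}X_0f,\,X_0f)$ after substituting $X_0f=\mc Pf-a\widetilde\Lambda^{2\sigma}_vf$; but the worst resulting term, $(\Lambda^{-2/3}a\widetilde\Lambda^{2\sigma}_vf,\,a\widetilde\Lambda^{2\sigma}_vf)$, is comparable to $\|\Lambda^{-1/3}\widetilde\Lambda^{2\sigma}_vf\|_0^2$, and dominating the weight $\Lambda^{-1/3}$ by $|D_v|^{-1/3}$ only yields $\|\widetilde\Lambda^{2\sigma-1/3}_vf\|_0^2$, which \reff{T} controls precisely when $2\sigma-1/3\leq\sigma$, i.e.\ $\sigma\leq1/3$. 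For $\sigma>1/3$ your manipulation therefore returns a quantity at least as strong as the one you started with, and no amount of \reff{Young}, interpolation, or order-counting in the isotropic classes $S^m(\RR^{2n+1})$ can close the loop, because the required gain is anisotropic: regularity in $(\tau,\xi)$ must be purchased with regularity in $\eta$, and the isotropic calculus cannot see that exchange. The missing step is the bracket mechanism itself: keep $X_0$ in the multiplier (work with $(\Lambda^{-2/3}X_0f,\,a\widetilde\Lambda^{2\sigma}_vf)$ rather than eliminating $X_0$ entirely), integrate by parts so that the commutator $[\widetilde\Lambda^\sigma_v,X_0]\sim|D_v|^{\sigma-1}\partial_x$ appears, and then absorb the resulting $\partial_x$-factor using $\partial_{x_j}=[X_j,X_0]$ together with \reff{re1}; the exponent $1/3$ is the outcome of balancing the weight $\Lambda^{-2/3}$ against the one full $x$-derivative this bracket produces. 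Until that computation is carried out, the case $\sigma>1/3$ of \reff{re2} --- the only case in which \reff{re2} says more than what follows trivially from \reff{T} and the equation --- remains unproved.
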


This is the result of Proposition 3.1 in \cite{MX}. The following
lemma is to estimate the commutators, which is different from the
calculation in \cite{MX} for the second part of the lemma.

\begin{lem}\label{mul}

Let $K$ be a compact subset of $\bb{R}^{2n+1}.$ Then for any $f\in
C_0^\infty(K),$ we have
\begin{equation}
\label{mul1}
\begin{array}{l}
\|[X_j,~\Lambda^{-1}{\widetilde{X}}_0]f\|_{\sigma/2-1/6} \leq
C_K(~\|\mc{P}f\|_0+\|f\|_0~),\hspace{0.5cm} j=1,\cdots ,n,
\end{array}
\end{equation}
and
\begin{equation}
\label{mul2}
\begin{array}{l}
 \|[\Lambda^{-1}X_j,~{\widetilde{X}}_0]f\|_{\sigma/4} \leq
 C_K(~\|\mc{P}f\|_0+\|f\|_0~),\hspace{0.5cm} j=1,\cdots ,n.
\end{array}
\end{equation}

\end{lem}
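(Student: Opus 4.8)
The plan is to reduce both commutators to the single quantity $\Lambda^{-1}\partial_{x_j}f$ (modulo lower--order errors) and then to estimate $\bignorm{\Lambda^{-1}\partial_{x_j}f}_r$ for $r=\frac{\sigma}{2}-\frac16$ (which will give \reff{mul1}) and for $r=\frac{\sigma}{4}$ (which will give \reff{mul2}), exploiting the bracket relation $\partial_{x_j}=[X_j,X_0]$ together with Lemma \ref{lem2} and \reff{T}. For the reduction: since $X_j=\partial_{v_j}$ has constant coefficients it commutes with $\Lambda^{-1}$, and since $\widetilde{X}_0 g=X_0 g$ for $g\in C_0^\infty(K)$, one gets for $f\in C_0^\infty(K)$
$$[X_j,\,\Lambda^{-1}\widetilde{X}_0]f=\Lambda^{-1}(X_jX_0-X_0X_j)f=\Lambda^{-1}\partial_{x_j}f ,$$
while, expanding $\widetilde{X}_0=\partial_t+\tilde\psi(v)\,v\cdot\partial_x$ and using that $\tilde\psi\equiv1$, $\partial_{v_j}\tilde\psi\equiv0$ on $\mathrm{supp}\,f$ and that $\partial_t,\partial_x$ commute with $\Lambda^{-1}X_j$, a short computation gives
$$[\Lambda^{-1}X_j,\,\widetilde{X}_0]f=\Lambda^{-1}\partial_{x_j}f+\bigcom{\Lambda^{-1},\,\tilde\psi(v)v}\cdot\partial_x\,X_jf ;$$
here $\bigcom{\Lambda^{-1},\tilde\psi(v)v}\cdot\partial_x\in{\rm Op}(S^{-1})$ with bounded coefficients, so by \reff{re1} its contribution is $\le C_K\|X_jf\|_{\sigma/4-1}\le C_K\bignorm{\Lambda^{-1}X_jf}_{\sigma}\le C_K\inner{\|\mc Pf\|_0+\|f\|_0}$. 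Thus everything reduces to the estimate on $\bignorm{\Lambda^{-1}\partial_{x_j}f}_r$.

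For $r=\frac{\sigma}{2}-\frac16$ the plan is as follows. If $\sigma\le\frac13$ then $\Lambda^{r-1}\partial_{x_j}$ has order $r\le0$ and the bound is trivial; for $\sigma>\frac13$ I would write, using $X_0^*=-X_0$, $X_j^*=-X_j$,
$$\bignorm{\Lambda^{r-1}\partial_{x_j}f}_0^2=\biginner{\Lambda^{2r-2}\partial_{x_j}f,\,[X_j,X_0]f}=-\biginner{\Lambda^{2r-2}\partial_{x_j}X_jf,\,X_0f}+\biginner{X_0\Lambda^{2r-2}\partial_{x_j}f,\,X_jf}.$$
Since $2r-\frac23=\sigma-1$, the first term is $\le\|\partial_{x_j}X_jf\|_{2r-5/3}\,\|X_0f\|_{-1/3}\le C\,\|X_jf\|_{\sigma-1}\,\|X_0f\|_{-1/3}$, bounded by $C\inner{\|\mc Pf\|_0+\|f\|_0}^2$ via \reff{re1}--\reff{re2}. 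In the second term $[X_0,\Lambda^{2r-2}\partial_{x_j}]$ is a Fourier multiplier of order $2r-1$ (since $[v_k\partial_{x_k},\Lambda^{2r-2}\partial_{x_j}]=[v_k,\Lambda^{2r-2}\partial_{x_j}]\partial_{x_k}$ and $[v_k,\Lambda^{2r-2}\partial_{x_j}]$ is a Fourier multiplier of order $2r-2$), so $\bignorm{[X_0,\Lambda^{2r-2}\partial_{x_j}]f}_{1-\sigma}\le C\|f\|_{2r-\sigma}=C\|f\|_{-1/3}$ and $\bignorm{\Lambda^{2r-2}\partial_{x_j}X_0f}_{1-\sigma}\le C\|X_0f\|_{2r-\sigma}=C\|X_0f\|_{-1/3}$; pairing with $\|X_jf\|_{\sigma-1}$ and using \reff{re1}--\reff{re2} again yields $C\inner{\|\mc Pf\|_0+\|f\|_0}^2$. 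This gives \reff{mul1}.

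For $r=\frac{\sigma}{4}$: when $\sigma\ge\frac23$ one has $\frac\sigma4\le\frac\sigma2-\frac16$ and \reff{mul2} follows from the previous paragraph, so the point is the range $\sigma<\frac23$, where the argument has to depart from the one in \cite{MX}. There the scheme above would demand control of $\|X_0f\|_{-\sigma/2}$, which \reff{re2} does not provide, so I would supplement it with $\|X_0f\|_{-\sigma}\le C_K\inner{\|\mc Pf\|_0+\|f\|_0}$, obtained from $X_0f=\mc Pf-a{\widetilde\Lambda}^{2\sigma}_vf$ and $\bignorm{a{\widetilde\Lambda}^{2\sigma}_vf}_{-\sigma}\le C\bignorm{{\widetilde\Lambda}^{\sigma}_vf}_0\le C\inner{\|\mc Pf\|_0+\|f\|_0}$ by \reff{T}. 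Substituting $X_0=\mc P-a{\widetilde\Lambda}^{2\sigma}_v$ into $\bignorm{\Lambda^{\sigma/4-1}\partial_{x_j}f}_0^2=\biginner{\Lambda^{\sigma/2-2}\partial_{x_j}f,[X_j,X_0]f}$, the contributions carrying $\mc Pf$ or the commutator $[X_0,\Lambda^{\sigma/2-2}\partial_{x_j}]$ are handled exactly as before (only $\|\mc Pf\|_0$, $\|f\|_0$ and $\|X_jf\|_{\sigma-1}$ enter, and $\frac\sigma4\le\frac\sigma2$ is all that is needed); for the ones carrying $a{\widetilde\Lambda}^{2\sigma}_vf$ I would write ${\widetilde\Lambda}^{2\sigma}_v=({\widetilde\Lambda}^{\sigma}_v)^2$ and move one factor ${\widetilde\Lambda}^{\sigma}_v$ onto a factor bearing the negative weight $\Lambda^{\sigma/4-1}$, or onto $f$ (where it becomes the controlled ${\widetilde\Lambda}^{\sigma}_vf$), using that $X_j$ commutes with ${\widetilde\Lambda}^{\sigma}_v$, that $[a,{\widetilde\Lambda}^{\sigma}_v]$ has order $\sigma-1\le0$ (Section \ref{sect+3}), and that $\frac{5\sigma}{4}-1\le0$; this should end in
$$\bignorm{\Lambda^{\sigma/4-1}\partial_{x_j}f}_0^2\le C\,\bignorm{\Lambda^{\sigma/4-1}\partial_{x_j}f}_0\inner{\|\mc Pf\|_0+\|f\|_0}+C\inner{\|\mc Pf\|_0+\|f\|_0}^2,$$
hence \reff{mul2}.

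The hard part will be this last step, because of terms of the type $\biginner{\Lambda^{\sigma/2-2}\partial_{x_j}f,\;a{\widetilde\Lambda}^{2\sigma}_vX_jf}$: one cannot put all of ${\widetilde\Lambda}^{2\sigma}_v$, nor even a single ${\widetilde\Lambda}^{\sigma}_v$, onto $X_jf$, since $\bignorm{{\widetilde\Lambda}^{\sigma}_vX_jf}_0$ behaves like $\|f\|_{\sigma+1}$, well beyond the reach of \reff{re1} and \reff{T}. Such a term has to be rearranged --- the $X_j$ moved back onto the $a{\widetilde\Lambda}^{2\sigma}_v$--side and the resulting pieces recombined with the remaining ones --- in such a way that no factor ever carries more than the $\sigma-1$ extra $v$--derivatives that \reff{re1} and \reff{T} make available; keeping this balance under control, together with the commutator estimates of Section \ref{sect+3}, is exactly what forces the gain down to $\frac\sigma4$ when $\sigma\le\frac23$ and is, I expect, the main technical burden of the proof.
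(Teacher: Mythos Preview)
Your treatment of \reff{mul1} is correct and is essentially the paper's argument, written a bit more cleanly: you make explicit the identity $[X_j,\Lambda^{-1}\widetilde X_0]f=\Lambda^{-1}\partial_{x_j}f$ and then pair exactly as the paper does (its $Q_j=\Lambda^{\sigma-4/3}\partial_{x_j}$ is your $\Lambda^{2r-2}\partial_{x_j}$ with $r=\sigma/2-1/6$), arriving at the same two inputs \reff{re2}, \reff{re1}.

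For \reff{mul2} there is a genuine gap, and you have already put your finger on it. After the (correct) reduction to $\|\Lambda^{\sigma/4-1}\partial_{x_j}f\|_0$ and the substitution $X_0=\mc P-a\widetilde\Lambda_v^{2\sigma}$, the leftover term is, up to harmless commutators,
\[
2\,\RE\bigl(\Lambda^{\sigma/2-2}\partial_{x_j}X_jf,\;a\widetilde\Lambda_v^{2\sigma}f\bigr),
\]
and your ``move one $\widetilde\Lambda_v^\sigma$ onto the negative-weight side'' leads to $\|\widetilde\Lambda_v^\sigma\Lambda^{\sigma/2-1}X_jf\|_0$, which neither \reff{re1} nor \reff{T} gives you (\reff{T} controls $\widetilde\Lambda_v^\sigma f$ with no extra $X_j$, \reff{re1} controls $\Lambda^{\sigma-1}X_jf$ with no extra $\widetilde\Lambda_v^\sigma$). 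The scheme you sketch---shuttling $X_j$ back and forth---does not close at $\sigma/4$; some term always ends up $\sigma/2$ above what \reff{re1}/\reff{T} provide.

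What the paper supplies, and what your plan is missing, is the separate \emph{mixed} auxiliary estimate
\[
\bignorm{\widetilde\Lambda_v^{\sigma}\Lambda^{\sigma/2}\Lambda^{-1}X_jf}_0\le C_K\bigl(\|\mc Pf\|_0+\|f\|_0\bigr),
\]
proved by writing $\|\widetilde\Lambda_v^\sigma\Lambda^{\sigma/2}\Lambda^{-1}X_jf\|_0^2=-\RE(\mc Pf,\,a^{-1}\Lambda^{\sigma}\Lambda^{-2}X_j^2 f)+\RE(\widetilde X_0 f,\,a^{-1}\Lambda^{\sigma}\Lambda^{-2}X_j^2 f)$ and exploiting the skew-adjointness of $\widetilde X_0$ once more; the point is that $\Lambda^\sigma\Lambda^{-2}X_j^2$ has order $\sigma$, so after the commutators one lands back on $\|\Lambda^{-1}X_jf\|_\sigma$, i.e.\ \reff{re1}. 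With this in hand your hard term is immediate:
\[
\bigl|(\Lambda^{\sigma/2-2}\partial_{x_j}X_jf,\,a\widetilde\Lambda_v^{2\sigma}f)\bigr|
\le C\,\bignorm{\widetilde\Lambda_v^\sigma\Lambda^{\sigma/2-1}X_jf}_0\,\bignorm{\widetilde\Lambda_v^\sigma f}_0
\le C\bigl(\|\mc Pf\|_0+\|f\|_0\bigr)^2.
\]
So your overall architecture is fine; the missing idea is this one extra use of the equation to gain the combined $\widetilde\Lambda_v^\sigma$--plus--$\Lambda^{\sigma/2}$ on $\Lambda^{-1}X_jf$, which is exactly what forces the exponent down to $\sigma/4$.
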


\begin{proof} We denote
$Q_j=\Lambda^{\sigma-1/3-1 }[X_j, {X}_0]=
\Lambda^{\sigma-1/3-1}\partial_{x_j}\in {\rm Op}(S^{\sigma-1/3}).$
Note that $[X_k,~Q_j]=0$ for any $1\leq k\leq n$. Therefore for any
$f\in C_0^\infty(K),$
\begin{align*}
&\|[X_j,~\Lambda^{-1}\widetilde{X}_0]f\|_{\sigma/2-1/6}^2=
\|[X_j,~\Lambda^{-1}X_0]f\|_{\sigma/2-1/6}^2\\
&\leq |(X_j\Lambda^{-1}X_0f,~Q_jf)|+|(\Lambda^{-1}{\widetilde{X}}_0
 X_jf,~Q_jf)|\\
&\leq|(\Lambda^{-1}X_0f,~Q_jX_jf)|+|(X_jf,~{\widetilde{X}}_0\Lambda^{-1}Q_jf)|\\
&\leq~\|\Lambda^{-1}X_0f\|_{2/3}\|Q_j X_jf\|_{-2/3} +|(X_jf,
~[{\widetilde{X}}_0,~\Lambda^{-1}Q_j]f)|
+|(X_jf, ~\Lambda^{-1}Q_jX_0f)|~\\
&\leq
C_K\{~\|\Lambda^{-1/3}X_0f\|_{0}^2+\|\Lambda^{-1}X_jf\|_\sigma^2+\|f\|_0^2
~\},
\end{align*}
where we have used the simple fact that
$[{\widetilde{X}}_0,~\Lambda^{-1}Q_j]\in {\rm
Op}(S^{\sigma-1/3-1}).$ Then (\ref{re2}) and (\ref{re1}) give
immediately (\ref{mul1}).

We now study (\ref{mul2}). First of all, we have
\begin{eqnarray*}
\|[\Lambda^{-1}X_j,~{\widetilde{X}}_0]f\|_{\sigma/4}^2&=&(\Lambda^{-1}X_j
{\widetilde{X}}_0f,~\Lambda^{\sigma/2}[\Lambda^{-1}X_j,~{\widetilde{X}}_0]f)\\
&&- ({\widetilde{X}}_0 \Lambda^{-1}X_jf,
~\Lambda^{\sigma/2}[\Lambda^{-1}X_j, ~{\widetilde{X}}_0]f).
\end{eqnarray*}
By a straightforward calculation, it follows that
\begin{align*}
&|({\widetilde{X}}_0 \Lambda^{-1}X_jf,
~\Lambda^{\sigma/2}[\Lambda^{-1}X_j,
~{\widetilde{X}}_0]f)|=|(\Lambda^{-1}X_j f,
~{\widetilde{X}}_0\Lambda^{\sigma/2}
[\Lambda^{-1}X_j, ~{\widetilde{X}}_0]f)|\\
&\leq|(\Lambda^{-1}X_j f, ~\Lambda^{\sigma/2}[\Lambda^{-1}X_j,
~{\widetilde{X}}_0] {\widetilde{X}}_0f)| +|(\Lambda^{-1}X_j
f,~[\Lambda^{\sigma/2}[\Lambda^{-1}X_j,
~{\widetilde{X}}_0],~{\widetilde{X}}_0]f)|\\
&\leq C_K\big\{~|(\Lambda^{-1}X_j f, ~
\Lambda^{\sigma/2}[\Lambda^{-1}X_j,
~{\widetilde{X}}_0]X_0f)|+\|\Lambda^{-1}X_j
f\|_{\sigma/2}^2+\|f\|_0^2~\big\}\\
&\leq C_K\big\{~|(\Lambda^{-1}X_j f,
~\Lambda^{\sigma/2}[\Lambda^{-1}X_j,
~{\widetilde{X}}_0]X_0f)|+\|\mc{P}f\|_0^2+\|f\|_0^2~\big\}.
\end{align*}
In the last inequality,  we have used (\ref{re1}) in Lemma
\ref{lem2}.

Denote $P_{\sigma/2}=\Lambda^{\sigma/2}[\Lambda^{-1}X_j,
~{\widetilde{X}}_0] \in {\rm Op}(S^{\sigma/2})$. Recall that
$X_0=\mc P-a{\widetilde\Lambda}^{2\sigma}_v$.  We have
\begin{align*}
&|(\Lambda^{-1}X_j f, ~\Lambda^{\sigma/2}[\Lambda^{-1}X_j,
~{\widetilde{X}}_0]X_0f)|
=|(\Lambda^{-1}X_j f,~P_{\sigma/2}X_0f)|\\
&\leq |(\Lambda^{-1}X_j f,~P_{\sigma/2}\mc{P}f)| +|(\Lambda^{-1}X_j
f,~P_{\sigma/2}a{\widetilde\Lambda}^{2\sigma}_v
 f)|\\
&\leq C_K\{~\|\Lambda^{-1}X_j f\|_{\sigma/2}^2+\|\mc{P}f\|_0^2
+|({\widetilde\Lambda}^{\sigma}_v\Lambda^{-1}X_j f,
~{\widetilde\Lambda}^{-\sigma}_v
P_{\sigma/2}a{\widetilde\Lambda}^{2\sigma}_v f)|~\}\\
&\leq C_K\{~\|\Lambda^{-1}X_j f\|_{\sigma/2}^2+\|\mc{P}f\|_0^2
+\|{\widetilde\Lambda}^{\sigma}_v\Lambda^{-1}X_j f\|_{\sigma/2}^2+
\|{\widetilde\Lambda}^{\sigma}_vf\|_0^2~\}\\
&\leq
C_K\{~\|{\widetilde\Lambda}^{\sigma}_v\Lambda^{\sigma/2}\Lambda^{-1}X_j
f\|_0^2+\|\mc{P}f\|_0^2+\|f\|_0^2~\}.
\end{align*}
For the last inequality,  we used results from  (\ref {T}) and
(\ref{re1}). Clearly,
$[{\widetilde\Lambda}^{\sigma}_v,~\Lambda^{-1}X_j]=[{\widetilde\Lambda}^{\sigma}_v,
~\Lambda^{\sigma/2}]=[\Lambda^{-1}X_j,~\Lambda^{\sigma/2}]=0$. Then
we get
\begin{align*}
&\|{\widetilde\Lambda}^{\sigma}_v\Lambda^{\sigma/2}\Lambda^{-1}X_jf\|_0^2=
-{\rm Re}(\mc{P}f, ~a^{-1}\Lambda^\sigma \Lambda^{-2}X_j^2f) +{\rm
Re}({\widetilde{X}}_0f, ~a^{-1}\Lambda^\sigma
 \Lambda^{-2}X_j^2f)\\
&\leq C_K\{~\|\mc Pf\|_0^2+\|\Lambda^{-1}X_jf\|_\sigma^2 +{\frac 1
2}|(f,~[\Lambda^\sigma
\Lambda^{-2}X_j^2,~a^{-1}{\widetilde{X}}_0]f)|\\
&\,\,\,\,\,\,\,\,\,\,\,+ {\frac 1
2}|(f,~[a^{-1},~{\widetilde{X}}_0]\Lambda^\sigma
 \Lambda^{-2}X_j^2f)|~\}\\
&\leq C_K\{~\|\mc Pf\|_0^2+\|f\|_0^2+\|\Lambda^{-1}X_jf\|_\sigma^2
+|(f,~\Lambda^{-1}X_j[\Lambda^\sigma
\Lambda^{-1}X_j,~a^{-1}{\widetilde{X}}_0]f)|\\
&\,\,\,\,\,\,\,\,\,\,\,+
|(f,~[\Lambda^{-1}X_j,~a^{-1}{\widetilde{X}}_0]\Lambda^\sigma
 \Lambda^{-1}X_j f)|~\}\\
&\leq C_K\{~\|\mc Pf\|_0^2+\|\Lambda^{-1}X_jf\|_\sigma^2+\|f\|_0^2~\}\\
&\leq C_K\{~\|\mc Pf\|_0^2+\|f\|_0^2~\}.
\end{align*}
The above three estimates show immediately
\begin{eqnarray*}
|({\widetilde{X}}_0\Lambda^{-1}X_jf,~P_{\sigma/2}f)| \leq
C_K\{~\|\mc P f\|_0^2+\|f\|_0^2~\}.
\end{eqnarray*}
Similarly, we can prove
\begin{eqnarray*}
|(\Lambda^{-1}X_j {\widetilde{X}}_0 f,~P_{\sigma/2}f)| \leq
C_K\{~\|\mc P f\|_0^2+\|f\|_0^2~\}.
\end{eqnarray*}
This completes the proof of  Lemma \ref{mul}.
\end{proof}

The rest of this section is devoted to the proof of proposition
\ref{prp2}:

\smallskip

\emph{Proof of Proposition \ref{prp2}.}\hspace{0.1cm} Notice that
$\partial_{x_j}=[X_j,~X_0]$ and $
\partial_t=X_0-\sum\limits_{j=1}^n
v_j\cdot[X_j,~X_0].$ Hence, for any $f\in C_0^\infty(K),$ we have
\begin{eqnarray*}
\|f\|^2_{\delta}
&=&~\|\partial_tf\|^2_{\delta-1}+\sum\limits_{j=1}^n\|\partial_{x_j}f\|^2_{\delta-1}
+\sum\limits_{j=1}^n\|\partial_{v_j}f\|^2_{\delta-1}+\|f\|^2_0~\\
&\leq&C_K~\{~\|\Lambda^{-1}X_0f\|^2_{\delta}+
\sum\limits_{j=1}^n\big(\|\tilde\psi(v) v_j
 [X_j,~{\widetilde{X}}_0]f\|^2_{\delta-1}\\
&&\,\,\,\,\,\,\,\,\,\, +
\|[X_j,~{\widetilde{X}}_0]f\|^2_{\delta-1}+\|\Lambda^{-1}X_jf\|^2_{\delta}\big)+\|f\|^2_0~\}.
\end{eqnarray*}
Since
$\delta=\max\set{\sigma/4,~\sigma/2-1/6}\leq\min\set{2/3,~\sigma},$
applying (\ref{re2}) and (\ref{re1}) to Lemma \ref{lem2}, we have
that
$$
\|\Lambda^{-1}X_0f\|_{\delta}+\sum_{j=1}^n\|\Lambda^{-1}X_jf\|_{\delta}\leq
C_K\{~\|\mc{P}f\|_0+\|f\|_0~\}
$$
and
$$\|\tilde\psi(v) v_j [X_j,~{\widetilde{X}}_0]f\|_{\delta-1}\leq C_K\{
 \|
[X_j,~{\widetilde{X}}_0]f\|_{\delta-1}+\|f\|_{0}\}.
$$
 It remains to treat the  term $\|[X_j,
\,{\widetilde{X}}_0]f\|_{\delta-1}.$ We consider the following two
cases.

\smallskip

\emph{Case (i).}
 \:$\delta=\max\set{\sigma/4,~\sigma/2-1/6}=\sigma/2-1/6.$

We apply (\ref{mul1}) in Lemma \ref{mul} to get
\begin{eqnarray*}
\|[X_j,~{\widetilde{X}}_0]f\|_{\delta-1}&\leq& \|[X_j,
~\Lambda^{-1}{\widetilde{X}}_0]f\|_\delta+\|[X_j,~\Lambda^{-1}]
{\widetilde{X}}_0 f\|_\delta\\
&\leq&C_K\{~\|\mc{P}f\|_0+\|\Lambda^{-1}X_0f\|_\delta+ \|f\|^2_0~\}.
\end{eqnarray*}
Since $\delta<2/3,$ then applying (\ref{re2}) again, we get
immediately
\begin{eqnarray*}
\|[X_j,~{\widetilde{X}}_0]f\|_{\delta-1} \leq
C_K\{~\|\mc{P}f\|_0+\|f\|_0~\}.
\end{eqnarray*}

\smallskip

\emph{Case (ii).} \:$\delta=\max(\sigma/4, \sigma/2-1/6)=\sigma/4.$

By (\ref{mul2}) in Lemma \ref{mul}, it follows that
\begin{eqnarray*}
\|[X_j,~{\widetilde{X}}_0]f\|_{\delta-1}&\leq& \|[\Lambda^{-1}X_j,
~{\widetilde{X}}_0]f\|_\delta+\|
[\Lambda^{-1}, ~{\widetilde{X}}_0]X_jf\|_\delta\\
&\leq&C_K\{~\|\mc{P}f\|_0+\|\Lambda^{-1}X_jf\|_\delta+\norm{f}_0~\}.
\end{eqnarray*}
Note that $\delta<\sigma,$ and hence from (\ref{re1}), we have
\begin{eqnarray*}
\|[X_j,~{\widetilde{X}}_0]f\|_{\delta-1}\leq
C_K\{~\|\mc{P}f\|_0+\|f\|_0~\}.
\end{eqnarray*}

\smallskip

A combination of  Case (i) and  Case (ii) yields that for
$\delta=\max\set{\sigma/4,~\sigma/2-1/6},$
\begin{eqnarray*}
\|[X_j,~{\widetilde{X}}_0]f\|_{\delta-1} \leq
C_K\{~\|\mc{P}f\|_0+\|f\|_0~\}.
\end{eqnarray*}
Then we get
\begin{eqnarray}\label{delta}
\|f\|_\delta \leq C_K\{~\|\mc{P}f\|_0+\|f\|_0~\}.
\end{eqnarray}

\bigbreak

Choose now a cutoff function $\psi\in C_0^{\infty}(\bb R^{2n+1})$
such that $\psi|_K\equiv1$ and Supp $\psi$ is a neighborhood of $K$.
Then for any $r\geq0$,  $\varepsilon>0$ and  $f\in C^\infty_0(K)$,
by (\ref{delta}), we have
\begin{eqnarray*}
\|f\|_{r+\delta}&=&\|\Lambda^r \psi
f\|_{\delta}\leq\|\psi\Lambda^rf\|_\delta+\|[\Lambda^r,~\psi]f\|_\delta
\leq C_K\{~\|\mc{P}\psi\Lambda^rf\|_0+\|f\|_r~\}.
\end{eqnarray*}
Furthermore, notice  that
$$
[a {\widetilde\Lambda}^{2\sigma}_v,
\psi\Lambda^r]=2a[{\widetilde\Lambda}^{\sigma}_v,
~\psi\Lambda^r]{\widetilde\Lambda}^{\sigma}_v
+a[{\widetilde\Lambda}^{\sigma}_v,~
[{\widetilde\Lambda}^{\sigma}_v,~\psi\Lambda^r]~]+[a,
~\psi\Lambda^r]{\widetilde\Lambda}^{2\sigma}_v.
$$
Hence
\begin{eqnarray*}
\|\mc{P}\psi\Lambda^rf\|_0&\leq&
\|\psi\Lambda^r\mc{P}f\|_0+\|[{\widetilde{X}}_0,~\psi\Lambda^r]f\|_0+
\|a[{\widetilde\Lambda}^{\sigma}_v,~[{\widetilde\Lambda}^{\sigma}_v,\psi\Lambda^r]~]f\|_0\\
&&+2\|a[{\widetilde\Lambda}^{\sigma}_v,~\psi\Lambda^r]{\widetilde\Lambda}^{\sigma}_v
f\|_0 +\|[a,
~\psi\Lambda^r]{\widetilde\Lambda}^{2\sigma}_v f\|_0\\
&\leq&C_{K,
r}\{~\|\mc{P}f\|_r+\|f\|_r+\|{\widetilde\Lambda}^{\sigma}_v
f\|_r~\},
\end{eqnarray*}
Combining  with (\ref{T}), we have
\begin{eqnarray*}
\|\mc{P}\psi\Lambda^rf\|_0 \leq C_{K, r}\{~\|\mc{P}f\|_r+\|f\|_r~\}.
\end{eqnarray*}
The above three estimates show that
\begin{eqnarray*}
\|f\|_{r+\delta}\leq C_{K, r}\{~\|\mc Pf\|_r+\|f\|_r~\}.
\end{eqnarray*}
Applying the interpolation inequality (\ref{interpolation}), it
follows that
\[
\|f\|_{r+\delta}\leq C_{\varepsilon,r,
K}\{~\|\mc{P}f\|_r+\|f\|_0~\}+\varepsilon\|f\|_{r+\delta}.
\]
Taking $\varepsilon$ small enough, we get the desired subelliptic
estimate (\ref {sub0}). This completes the proof of Proposition
\ref{prp2}.

\bigbreak Since the subelliptic estimate in  Proposition \ref{prp2}
is true for $0<\sigma<1$, we can now improve the
$C^\infty$-hypoellipticity result of [\ref {MX}]( which is for
$1/3<\sigma<1$ ) as in the  following Theorem:

\begin{thm}\label{th2.1}
Let $0<\sigma<1$. Then the operator $\mc{P}$ given by
(\ref{Fokker-Planck}) is $C^\infty$ hypoelliptic in
$\mathbb{R}^{2n+1}$, provided that the coefficient $ a(t,x,v) $ is
in the space  $C^\infty(\mathbb{R}^{2n+1})$ and $a(t,x,v)>0$ .
\end{thm}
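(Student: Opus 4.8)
The plan is to derive $C^\infty$ hypoellipticity from the subelliptic estimate \eqref{sub0} in Proposition \ref{prp2} by a standard but careful bootstrap argument, taking into account that $\mc P$ is \emph{not} a classical pseudo-differential operator on $\RR^{2n+1}$ because of the unbounded coefficient $v\cdot\partial_x$ in $X_0$. Suppose $u\in\D'(U)$ with $\mc P u\in C^\infty(U)$, where $U\subset\RR^{2n+1}$ is open. Fix a point $z_0\in U$ and a small ball $B$ with $\overline B\subset U$; I want to show $u\in C^\infty$ near $z_0$. Since the statement is local and $u$ is a distribution, there is some $\kappa_0\in\RR$ with $u\in H^{\kappa_0}_{loc}(U)$ (after shrinking $U$). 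The goal is to upgrade this regularity by $\delta$ at a time.

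First I would set up the localization. Choose a nested family of cutoffs $\psi_0,\psi_1,\psi_2,\dots\in C_0^\infty(U)$ with $\psi_{k+1}\equiv 1$ on a neighborhood of $\mathrm{supp}\,\psi_k$, all supported in a fixed compact $K\subset U$, and with $\psi_0\equiv 1$ near $z_0$. The idea is to apply \eqref{sub0} to $f=\psi_k\Lambda^{\kappa}u$ (suitably mollified so that everything is legitimately in $C_0^\infty$; one uses Friedrichs mollifiers and passes to the limit, or one argues via difference quotients). The key computation is commuting $\mc P$ past $\psi_k\Lambda^\kappa$: since $\mc P = X_0 + a\widetilde\Lambda^{2\sigma}_v$, we get
\[
\mc P(\psi_k\Lambda^\kappa u) = \psi_k\Lambda^\kappa \mc P u + [X_0,\psi_k\Lambda^\kappa]u + [a\widetilde\Lambda^{2\sigma}_v,\psi_k\Lambda^\kappa]u.
\]
The first term on the right is smooth. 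For the commutator with $a\widetilde\Lambda^{2\sigma}_v$ I would expand exactly as in the proof of Proposition \ref{prp2} (the three-term identity displayed there, followed by \eqref{T}), which produces terms of order $\kappa+2\sigma-1 < \kappa+\delta$ in $u$ near $\mathrm{supp}\,\psi_{k+1}$, hence controlled by $\|\psi_{k+1}\Lambda^{\kappa+\delta-\epsilon}u\|_0$ for small $\epsilon$. For $[X_0,\psi_k\Lambda^\kappa]u$: crucially, on $C_0^\infty(K)$ one may replace $X_0$ by $\widetilde X_0=\partial_t+\tilde\psi(v)v\cdot\partial_x$, which \emph{is} a classical first-order operator with bounded coefficients, so $[\widetilde X_0,\psi_k\Lambda^\kappa]\in\mathrm{Op}(S^\kappa)$ and this term is bounded by $\|\psi_{k+1}\Lambda^\kappa u\|_0$. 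Feeding all this into \eqref{sub0} with $r$ chosen so that $r+\delta=\kappa+\delta$, i.e. applying it at level $\kappa$, yields
\[
\|\psi_k\Lambda^{\kappa+\delta}u\|_0 \leq C\bigl\{\|\psi_{k+1}\Lambda^{\kappa}\mc P u\|_0 + \|\psi_{k+1}\Lambda^{\kappa}u\|_0\bigr\} + (\text{lower order}),
\]
so that $u\in H^{\kappa_0+\delta}_{loc}$ near $z_0$, provided $\mc P u$ is smooth enough (which it is). Iterating $k$ together with $\kappa\mapsto\kappa+\delta$ gives $u\in H^s_{loc}$ near $z_0$ for every $s$, hence $u\in C^\infty$ near $z_0$ by Sobolev embedding; since $z_0\in U$ was arbitrary, $u\in C^\infty(U)$.

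The main obstacle is the bookkeeping forced by the non-classical nature of $\mc P$: one cannot simply invoke the standard "subelliptic $\Rightarrow$ hypoelliptic" black box, because $\Lambda^\kappa$ does not commute nicely with the unbounded factor $v\cdot\partial_x$ in $X_0$, and $a\widetilde\Lambda^{2\sigma}_v$ is a $v$-Fourier multiplier composed with multiplication, so its commutators with $\psi_k\Lambda^\kappa$ must be handled by hand rather than by symbolic calculus in $\RR^{2n+1}$. The remedy — and the reason the estimate \eqref{T} and the cutoff trick $\widetilde X_0 P f = X_0 P f$ on $C_0^\infty(K)$ were set up in this section — is precisely that localizing in $v$ turns the troublesome pieces into honest pseudo-differential operators and lets the commutator bounds from the proof of Proposition \ref{prp2} be reused verbatim. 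A secondary technical point is justifying that $\psi_k\Lambda^\kappa u$ (or its mollification) is a legitimate test object in $C_0^\infty(K)$ for which \eqref{sub0} applies: this is the usual mollifier/difference-quotient argument, and one must check the constants in \eqref{sub0} are uniform as the mollification parameter tends to zero, which holds because $C_{K,r}$ depends only on $K$ and $r$.
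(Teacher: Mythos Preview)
Your proposal is correct and follows essentially the same route as the paper: derive local $H^{s}\to H^{s+\delta}$ regularity from the subelliptic estimate \eqref{sub0} by controlling the commutators $[\mc P,\,\psi\Lambda^\kappa]$ (using the $\widetilde X_0$ trick for the unbounded drift and the splitting of $[a\widetilde\Lambda_v^{2\sigma},\,\cdot\,]$ already used in the proof of Proposition~\ref{prp2}), then iterate and conclude by Sobolev embedding. The paper's own argument is just a pointer to Section~4 of \cite{MX} for the mollifier/commutator bookkeeping, so your write-up is in fact more explicit than theirs while matching their strategy.
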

In fact, if we consider only the local regularity problem, as in
Proposition 4.1 of [\ref {MX}], we can prove that if
 $f\in H^s_{{loc}}(\RR^{2n+1}), u\in\cD'(\RR^{2n+1})$ and $\mc{P}u=f$
then $u\in H^{s+\delta}_{{loc}}(\RR^{2n+1})$. By using the
subelliptic estimate (\ref {sub0}), the estimate for the commutators
between the operator $\mc{P}$ and the mollifiers  are exactly the
same as in  Section 4 of [\ref {MX}]. This gives the $C^\infty$
hypoellipticity by the  Sobolev embedding theorem. The same argument
applies to the semi-linear equations.

Remark that the results of [\ref {MX}] are not only  regularity
results. The authors also proved a global estimate with weights (the
moments). This  is another important problem for the kinetic
equation.


\section{Cutoff functions and commutators}
\label{sect+3} \setcounter{equation}{0}

To prove the Gevrey regularity of a solution, we have to prove an
uniformly iteration estimate (\ref{gevrey}). Our only tool is the
subelliptic estimate (\ref {sub0}).  Since it is a local estimate,
we have to control the commutators between the operator $\mc{P}$ and
the cutoff functions. This is always the technical key step in the
Gevrey regularity problem. Our additional difficulty comes from the
complicated nature of the operator $\mc{P}$.

Since the Gevrey hypoellipticity is a local property, it suffices to
show $\mc{P}$ is Gevrey hypoelliptic in the open domain
$\Omega\subset\Real^{2n+1}$ given by
\[
\Omega=\Omega^1\times \Omega^2=\{(t,x)\in \bb{R}^{n+1};\,
t^2+|x|^2<1 \}\times \set{v\in \bb{R}^{n};\,  |v|^2<1}.
\]
Define $W$ by setting
 \[ W=2\Omega=\set{\inner{t,x,v};\: \abs
t^2+\abs x^2\leq 2^2, \:\abs v\leq 2}
\]
For $0\leq \rho<1$, set
$\Omega_\rho=\Omega_\rho^1\times\Omega_\rho^2$ with $\Omega_\rho^1$
and $\Omega_\rho^2$ to be given by
  \[
  \Omega_\rho^1=\set{(t,x)\in\bb{R}^{n+1}; \:\:
  \biginner{t^2+|x|^2}^{1/2}<1-\rho},\qquad
   \Omega_\rho^2=
   \set{v\in \bb{R}^{n};\:\: |v|^2<1-\rho}.
\]
Let $\chi_{\rho}$  be the characteristic function of the set
$\Omega_{\rho}^2$, and let $\phi\in C_0^\infty(\Omega^2)$ be a
function satisfying  $0\leq \phi\leq 1$ and
$\int_{\bb{R}^{n}}\phi(v)dv=1.$ For any $\eps, ~\tilde\eps>0$,
setting $\phi_\varepsilon(v)=\varepsilon^{-n}
\phi\inner{{\frac{v}{\varepsilon}}}$ and
$\varphi_{\varepsilon,\tilde\eps}(v)=
\phi_{\varepsilon/2}*\chi_{\varepsilon/2 +\tilde\eps}(v)$. Then for
a small $\eps, ~\tilde\eps>0$,
\begin{eqnarray*}
&&\varphi_{\varepsilon,\tilde\eps}\in
C_0^\infty(\Omega_{\tilde\eps}^2);\,\,\,\,\,\,\varphi_{\eps,
\tilde\eps}=1\,\,\,\, \mbox{in}\,\,\,
\Omega_{\varepsilon+\tilde\eps}^2;\\
&& \sup_{v\in\RR^n}\abs{D^\alpha\varphi_{\varepsilon,\tilde\eps}(v)}
\leq C_\alpha\varepsilon^{-|\alpha|} \,\,\,\,\,\,\,\, \mbox{for
any}\,\,\,\,\,\,\, \alpha\in\NN^n.
\end{eqnarray*}
In the same way, we can find a function
$\psi_{\eps,\tilde\eps}(t,x)\in C_0^\infty(\Omega_{\tilde\eps}^1)$
such that $\psi_{\eps, \tilde\eps}=1$ in
$\Omega_{\varepsilon+\tilde\eps}^1$ and
$\sup\abs{D^\alpha\psi_{\varepsilon,\tilde\eps}} \leq
C_\alpha\varepsilon^{-|\alpha|}.$

Now for any $N\in\NN, N\geq 2$ and any $0<\rho<1$,  we set
$$
\Phi_{\rho, N}(t,x,v)=\psi_{\frac{\rho}{N}, \frac{(N-1)\rho}
{N}}(t,x) \vpi_{\frac{\rho}{N}, \frac{(N-1)\rho} {N}}(v).
$$
Then we have,
\begin{equation}\label{cutoff}
   \left\{ \begin{array}{lll}
            \Phi_{\rho, N}\in C_0^\infty(\Omega_{\frac{N-1}
            {N}\rho})\\
            \smallskip
            \Phi_{\rho,N}(t,x,v)=1, \quad (t,x,v)\in\Omega_{\rho},
            \\
            \smallskip
            \sup\abs{D^\alpha\Phi_{\rho,N}}
            \leq C_\alpha(N/\rho)^{|\alpha|}.
            \end{array}
   \right.
\end{equation}

\vspace{0.5ex}
 For such cut-off functions, we have the following Lemma (see  Corollary 0.2.2 of
 \cite{Dur}).
\begin{lem}\label{3.1}
  There exists a constant $C_n,$ depending only on $n,$ such that
  for any $0\leq\mu\leq n+2,$ and  $f\in\cS(\Real^{n+1}),$ we have
  \begin{equation}\label{cutoffnorm}
   \norm{\inner{D^\gamma\Phi_{\rho,N}}f}_{\mu}\leq C_n\set{(N/\rho)^{\abs\gamma}\norm
   f_{\mu}+(N/\rho)^{\abs\gamma+\mu}\norm{f}_0}, \quad \abs\gamma\leq 2.
  \end{equation}
\end{lem}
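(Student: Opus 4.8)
The plan is to exploit the explicit convolution structure of the cutoff $\Phi_{\rho,N}$. Recall that $\Phi_{\rho,N}(t,x,v)=\psi_{\rho/N,(N-1)\rho/N}(t,x)\,\vpi_{\rho/N,(N-1)\rho/N}(v)$, and that $\vpi_{\eps,\tilde\eps}=\phi_{\eps/2}*\chi_{\eps/2+\tilde\eps}$ with $\phi\in C_0^\infty$ fixed (similarly for $\psi$). Differentiating, every $D^\gamma\Phi_{\rho,N}$ is a finite sum of terms of the form $\bigl(\eps^{-|\gamma'|}(D^{\gamma'}\phi)_{\eps/2}*\chi_{\cdot}\bigr)\cdot\bigl(\text{similar factor}\bigr)$, with $\eps=\rho/N$ and $|\gamma'|\le|\gamma|\le 2$. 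The point is that the ``bad'' factor $(D^{\gamma'}\phi)_{\eps/2}$ has $L^1$ norm bounded by $C\eps^{-|\gamma'|}$, independently of the convolution against $\chi$, since $\chi$ is merely a characteristic function (so $\|\chi_{\cdot}\|_{L^\infty}\le 1$ and convolution with an $L^1$ kernel is harmless in $L^\infty$). Thus $\|D^\gamma\Phi_{\rho,N}\|_{L^\infty}\le C(N/\rho)^{|\gamma|}$, which is precisely \reff{cutoff}, but now I need the sharper \emph{Sobolev}-norm statement \reff{cutoffnorm}, which tracks how multiplication by $D^\gamma\Phi_{\rho,N}$ acts on $H^\mu$.

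The key step is to estimate $\norm{(D^\gamma\Phi_{\rho,N})f}_\mu$ for $0\le\mu\le n+2$. I would handle it by writing $\mu=k+\theta$ with $k\in\{0,1,\dots,n+2\}$ an integer and using the Leibniz rule together with the commutation $\Lambda^\mu((D^\gamma\Phi)f)$ expanded via the fixed-convolution representation; equivalently, one can use that $H^\mu$ for $0\le\mu\le n+2$ is a module over the space of functions whose derivatives up to order $n+2$ (actually up to order $>\mu$, hence $n+2$ suffices) are bounded in $L^\infty$, with module constant controlled by $\max_{|\beta|\le n+2}\|D^\beta g\|_{L^\infty}$ — and here, by the argument above, each such derivative of $\Phi_{\rho,N}$ contributes a factor $(N/\rho)^{|\beta|}$. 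Carrying this through with Leibniz, $D^\beta(D^\gamma\Phi_{\rho,N})$ produces a total power $(N/\rho)^{|\gamma|+|\beta|}$; the terms where no derivative falls on $\Phi$ give the $(N/\rho)^{|\gamma|}\norm f_\mu$ contribution, while the terms where up to $n+2$ derivatives fall on $\Phi$ cost at most an extra $(N/\rho)^{|\beta|}\le(N/\rho)^{\mu}$ (using $|\beta|\le\mu$ suffices once $\mu\le n+2$ but one keeps $n+2$ as a uniform bound), multiplying the lowest Sobolev norm $\norm f_0$. This is exactly the shape of the right-hand side of \reff{cutoffnorm}, and the constant $C_n$ depends only on $n$ (through the number of terms in Leibniz and through the fixed profiles $\phi,\psi$, whose contribution can be absorbed into $C_n$).

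The main obstacle — and the reason the convolution structure is essential — is obtaining the bound with the correct, \emph{separated} powers of $N/\rho$: a naive estimate treating $\Phi_{\rho,N}$ as a generic function with $\|D^\alpha\Phi\|_{L^\infty}\le C_\alpha(N/\rho)^{|\alpha|}$ would, after interpolating or differentiating $\mu$ times, spread the loss and produce a factor like $(N/\rho)^{|\gamma|+\mu}$ multiplying $\norm f_\mu$ as well, which is too weak for the Gevrey iteration. The refinement is that the top-order Sobolev norm $\norm f_\mu$ must be paired only with the \emph{lowest}-order derivative power $(N/\rho)^{|\gamma|}$; securing this requires that derivatives of $\Phi_{\rho,N}$ acting through the commutator $[\Lambda^\mu,\,\Phi_{\rho,N}\cdot]$ be estimated \emph{in $L^\infty$} (equivalently, via Schur/Young against the fixed $L^1$ mollifier profiles) rather than in any negative Sobolev space. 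Once that observation is in place the computation is a routine application of the Leibniz rule and Young's inequality, and the restriction $\mu\le n+2$ enters only to ensure the Sobolev embedding $H^{\lceil\mu\rceil}(\Real^{n+1})$ is handled by finitely many ($\le n+2$) derivatives; this is why Durand's formulation fixes that range, and I would simply cite \cite{Dur} for the constant-tracking bookkeeping while presenting the convolution argument above as the conceptual core.
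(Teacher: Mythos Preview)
The paper does not prove this lemma at all; it simply cites Corollary~0.2.2 of Durand \cite{Dur}. Your decision to sketch the argument and then cite \cite{Dur} for the bookkeeping therefore already goes beyond what the paper does, and the sketch you give is essentially correct.

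One point is overstated, however. You claim the convolution structure is \emph{essential} to obtain the separated powers, and that a ``naive estimate'' using only the pointwise bound $\sup|D^\beta\Phi_{\rho,N}|\le C_\beta(N/\rho)^{|\beta|}$ from \reff{cutoff} would fail. In fact that naive route already works. For integer $\mu$, Leibniz gives
\[
  \bignorm{D^\alpha\bigl((D^\gamma\Phi_{\rho,N})f\bigr)}_0
  \le C\sum_{\beta\le\alpha}(N/\rho)^{|\gamma|+|\beta|}\norm{f}_{|\alpha|-|\beta|},
  \qquad |\alpha|\le\mu,
\]
and for each intermediate term the interpolation inequality \reff{interpolation} with $\varepsilon=(\rho/N)^{|\beta|}$ yields
\[
  (N/\rho)^{|\gamma|+|\beta|}\norm{f}_{|\alpha|-|\beta|}
  \le (N/\rho)^{|\gamma|}\norm{f}_\mu+(N/\rho)^{|\gamma|+\mu}\norm{f}_0,
\]
since $|\beta|\,(|\alpha|-|\beta|)\le(\mu-|\beta|)(\mu-|\alpha|+|\beta|)$ whenever $|\beta|\le|\alpha|\le\mu$. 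So the separation of powers is delivered by interpolation, not by the mollifier kernel. The convolution structure is only what produces the $L^\infty$ derivative bounds \reff{cutoff} in the first place; once those are in hand, the rest is Leibniz plus \reff{interpolation}. For non-integer $\mu\le n+2$ one passes through the adjacent integers (or, as you say, handles the commutator $[\Lambda^\mu,\cdot]$ directly), and this is where citing Durand is appropriate.
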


\vspace{1ex}

We study now the commutator of above cutoff function with the
operator $\mc{P}$. Since the operator is a differential operator
with respect to the $(t, x)$ variables, it is enough to consider the
commutator of ${\widetilde\Lambda}^{\sigma}_v$ with a cut-off
function in the $v$ variable. We set $\varphi_{\rho,N}(v)
=\vpi_{{\rho\over N},{{(N-1)\rho}\over N}}(v) $. The proof of the
following Lemma is very similar to that of M. Durand \cite{Dur}.
Since  our calculus  is much  easier and much more direct, we repeat
it here.

\begin{lem}\label{+lem9}
There exists a constant $C_{\sigma,n}$, depending only on $n$ and
$\sigma$, such that for any $\kappa$ with $1\leq \kappa\leq n+3,$
and  $f\in\cS(\Real^{2n+1}),$
\begin{equation}\label{+com1}
  \|[{\widetilde\Lambda}^{\sigma}_v,~~\varphi_{\rho,N}] f \|_\kappa
    \leq C_{\sigma,n}\set{
\inner{N/\rho}^\sigma\norm{f}_{\kappa}
+\inner{N/\rho}^{\kappa+\sigma}\norm{f}_0}
\end{equation}
and
\begin{eqnarray}\label{+com2}
\|[{\widetilde\Lambda}^{\sigma}_v,~~[{\widetilde\Lambda}^{\sigma}_v,~~\varphi_{\rho,N}]~]
f \|_\kappa\leq  C_{\sigma,n}\set{
\inner{N/\rho}^{2\sigma}\norm{f}_{\kappa}
+\inner{N/\rho}^{\kappa+2\sigma}\norm{f}_0}.
\end{eqnarray}

\end{lem}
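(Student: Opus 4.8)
The plan is to estimate the commutator $[{\widetilde\Lambda}^{\sigma}_v,\varphi_{\rho,N}]$ directly at the level of the Fourier transform in the $v$ variable, exploiting that $\varphi_{\rho,N}$ depends only on $v$ and that ${\widetilde\Lambda}^{\sigma}_v$ is the Fourier multiplier with symbol $p(\eta)^{1/2}=\abs\eta^{\sigma}\omega(\eta)+\abs\eta(1-\omega(\eta))$, which away from a bounded set behaves exactly like $\abs\eta^{\sigma}$. Writing out $[{\widetilde\Lambda}^{\sigma}_v,\varphi_{\rho,N}]f$ as an oscillatory integral, its kernel in the $\eta$ variable is governed by the difference $p(\eta)^{1/2}-p(\zeta)^{1/2}$ against $\widehat{\varphi_{\rho,N}}(\eta-\zeta)$. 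The elementary inequality $\bigl|\,\abs\eta^{\sigma}-\abs\zeta^{\sigma}\,\bigr|\leq C_\sigma\abs{\eta-\zeta}^{\sigma}$ for $0<\sigma<1$ (together with the analogous, even easier bound on the bounded region where $p^{1/2}$ is smooth) reduces matters to controlling a multiplication operator by a symbol of order $\sigma$ whose ``coefficient'' is built from the derivatives of $\varphi_{\rho,N}$. Concretely, I expect to reach a representation of the form
\[
[{\widetilde\Lambda}^{\sigma}_v,\varphi_{\rho,N}]=\sum_{\abs\gamma\geq 1} c_\gamma\,\inner{D^\gamma\varphi_{\rho,N}}\,R_\gamma,
\]
with each $R_\gamma$ a $v$-Fourier multiplier of order $\sigma-\abs\gamma\leq\sigma-1$, bounded on every $H^\mu$ uniformly; the genuinely singular contribution is the single term $\abs\gamma=1$, which carries a multiplier of order $\sigma-1<0$ and hence is harmless, and a leftover term of order $\sigma$ multiplied by a bounded function.

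Once this structure is in hand, the two stated estimates follow by bookkeeping. For \eqref{+com1}: each term is $\inner{D^\gamma\varphi_{\rho,N}}$ (with $\sup\abs{D^\gamma\varphi_{\rho,N}}\leq C_\gamma(N/\rho)^{\abs\gamma}$ from \eqref{cutoff}) composed with an operator of order $\leq\sigma$; applying Lemma~\ref{3.1} with $\mu=\kappa$, or rather its analogue for the relevant range of $\kappa$, to the factor $\inner{D^\gamma\varphi_{\rho,N}}(\cdot)$ and using that the multiplier of order $\sigma$ costs at most $\norm{\cdot}_{\kappa}\mapsto$ controlled by $\norm f_\kappa$ plus lower-order, one lands on exactly $C_{\sigma,n}\set{(N/\rho)^\sigma\norm f_\kappa+(N/\rho)^{\kappa+\sigma}\norm f_0}$. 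The key point that keeps the power of $N/\rho$ down to $\sigma$ (and not $\abs\gamma$) is precisely that a $\gamma$-fold differentiation of $\varphi_{\rho,N}$, which costs $(N/\rho)^{\abs\gamma}$, is paired with a multiplier of order $\sigma-\abs\gamma$, so the net frequency gain is $\sigma$ regardless of $\gamma$; the $(N/\rho)^{\kappa}$ in the second term is the usual price of trading $\norm\cdot_\kappa$ for $\norm\cdot_0$ via Lemma~\ref{3.1}. For \eqref{+com2} I would simply iterate: write the double commutator as $[{\widetilde\Lambda}^{\sigma}_v,\,\sum_\gamma c_\gamma\inner{D^\gamma\varphi_{\rho,N}}R_\gamma]$, move ${\widetilde\Lambda}^{\sigma}_v$ past $R_\gamma$ (they commute, being $v$-Fourier multipliers), and reapply the single-commutator analysis to each $[{\widetilde\Lambda}^{\sigma}_v,\inner{D^\gamma\varphi_{\rho,N}}]$. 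This produces a sum of terms of the shape $\inner{D^{\gamma'}\varphi_{\rho,N}}\cdot(\text{multiplier of order}\leq 2\sigma)$, contributing $(N/\rho)^{\abs\gamma'}$ with $\abs\gamma'$ offset against a multiplier of order $2\sigma-\abs\gamma'$, yielding the claimed $2\sigma$ powers.

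The main obstacle is the first step: making precise the operator identity expressing $[{\widetilde\Lambda}^{\sigma}_v,\varphi_{\rho,N}]$ as a finite sum of (derivative of cutoff)$\times$(order-$(\sigma-\abs\gamma)$ multiplier) with constants depending only on $n$ and $\sigma$ and not on $\rho,N$. Because $\sigma\in(0,1)$, ${\widetilde\Lambda}^{\sigma}_v$ is nonlocal, so one cannot literally Taylor-expand; instead I would use the integral formula for the symbol $p(\eta)^{1/2}$ — either a subordination/Laplace-transform representation writing $\abs\eta^{\sigma}$ as an average of $(1-e^{-s\abs\eta^2})$-type heat factors, or directly the Fourier-integral expression for the kernel of the commutator — and then estimate $\widehat{\varphi_{\rho,N}}$ via $\abs{D^\alpha\varphi_{\rho,N}}\leq C_\alpha(N/\rho)^{\abs\alpha}$ to bound the relevant kernel integrals. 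Durand's argument in \cite{Dur} does essentially this; as noted in the text, the point here is that for our explicit $p(\eta)$ the computation is more transparent, since on $\abs\eta\geq 2$ one has $p(\eta)^{1/2}=\abs\eta^{\sigma}$ exactly, and on $\abs\eta\leq 2$ the symbol is a fixed smooth compactly-supported-frequency bump contributing only a smoothing, $\rho,N$-uniform error. Verifying uniformity of all constants through these kernel estimates is the place where care is needed; everything after that is the routine bookkeeping sketched above.
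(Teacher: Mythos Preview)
Your scaling intuition is correct --- each derivative of $\varphi_{\rho,N}$ should be offset by a gain of one frequency order, leaving a net $(N/\rho)^\sigma$ --- but the mechanism you propose to realize it does not close. The ``representation'' $[{\widetilde\Lambda}^{\sigma}_v,\varphi_{\rho,N}]=\sum_{\abs\gamma\geq 1} c_\gamma\,(D^\gamma\varphi_{\rho,N})\,R_\gamma$ is the pseudodifferential asymptotic expansion, not an identity: truncating after finitely many terms leaves a remainder whose standard symbol bound involves \emph{higher} derivatives of $\varphi_{\rho,N}$, i.e.\ higher powers of $N/\rho$, with no compensating gain (the remainder is already of negative order, so the extra smoothing buys nothing on $L^2$). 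And the crude Fourier-side route via $\bigl|\,|\eta|^\sigma-|\zeta|^\sigma\,\bigr|\leq C_\sigma|\eta-\zeta|^\sigma$ together with Young's inequality requires $\norm{|\cdot|^\sigma\widehat{\varphi_{\rho,N}}}_{L^1}\leq C(N/\rho)^\sigma$; since $\varphi_{\rho,N}=\phi_{\rho/(2N)}*\chi$ with $\chi$ the characteristic function of a ball, $\widehat\chi(\zeta)$ decays only like $|\zeta|^{-(n+1)/2}$, and one computes $\norm{|\cdot|^\sigma\widehat{\varphi_{\rho,N}}}_{L^1}\gtrsim (N/\rho)^{\sigma+(n-1)/2}$, which is too large for $n\geq 2$.

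The paper avoids both difficulties by working in physical space with the singular-integral kernel
\[
|D_v|^\sigma g(v)=C_\sigma\int_{\Real^n}\frac{g(v)-g(v-\tilde v)}{|\tilde v|^{n+\sigma}}\,d\tilde v,
\]
which gives the exact formula
\[
\bigl[|D_v|^\sigma,\varphi_{\rho,N}\bigr]H(v)=C_\sigma\int_{\Real^n}\frac{H(v-\tilde v)\bigl(\varphi_{\rho,N}(v)-\varphi_{\rho,N}(v-\tilde v)\bigr)}{|\tilde v|^{n+\sigma}}\,d\tilde v.
\]
Splitting at $|\tilde v|=\rho/N$, the near part uses only $\sup|\nabla\varphi_{\rho,N}|\leq C(N/\rho)$ and the integrable kernel $|\tilde v|^{1-n-\sigma}$ on $\{|\tilde v|\leq\rho/N\}$; the far part uses only $\sup|\varphi_{\rho,N}|\leq 1$ and the integrable kernel $|\tilde v|^{-n-\sigma}$ on $\{|\tilde v|\geq\rho/N\}$. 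Young's inequality for convolutions then gives the $L^2$ bound with constant $C(N/\rho)^\sigma$, and the $H^\kappa$ bound follows by differentiating the integral formula (Leibniz) for integer $\kappa$ and interpolating. The crucial point you miss is that only $\varphi_{\rho,N}$ and $\nabla\varphi_{\rho,N}$ enter --- no infinite hierarchy of derivatives. Your passing mention of ``the Fourier-integral expression for the kernel of the commutator'' is in fact the right tool, but it should be the centerpiece rather than a fallback.
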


\begin{rmk}
Observe for $\tilde\rho=\frac{(N-1)\rho}{N},$\:
$\varphi_{\rho,N}{\widetilde\Lambda}^{\sigma}_v(1-\varphi_{\tilde\rho,N})f
=-\varphi_{\rho,N}\,[{\widetilde\Lambda}^{\sigma}_v,
\varphi_{\tilde\rho,N}]\,f.$ Then as a consequence of \reff{+com1},
we have
  \begin{equation*}
  \|\varphi_{\rho,N}{\widetilde\Lambda}^{\sigma}_v(1-\varphi_{\tilde\rho,N})f \|_\kappa
    \leq C_{\sigma,n}\set{
\inner{N/\rho}^\sigma\norm{f}_{\kappa}
+\inner{N/\rho}^{\kappa+\sigma}\norm{f}_0}.
\end{equation*}
Hence, in the following, we omit the detailed discussions for such
terms.
\end{rmk}

\begin{proof} To simplify the notation, in the course of the proof, we shall use $C$
 to
denote a constant which depend only on $n$ and $\sigma$ and may be
different in different contexts. We denote by $(\tau, \xi, \eta)$
the Fourier transformation variable of $(t,x,v)$. $\mc F_{t,x}(g),
\:\mc F_v(g)$ are the partial Fourier transforms, and $\hat g$ is
the full Fourier transform with respect to $(t,x,v).$ Set
\[
h=[{\widetilde\Lambda}^{\sigma}_v,~\vpi_{\rho,N}]f, \quad
H(v)=H_{\tau,\xi}(v)=\mc F_{t,x}(f)(\tau, \xi, v)
\]
In the following discussion, we always write $H(v)$ for
$H_{\tau,\xi}(v)$, if there is  no risk of causing the  confusion.
It is clear that
\begin{equation}\label{relation}
\mc F_{t,x}(h)(\tau, \xi,
v)=[{\widetilde\Lambda}^{\sigma}_v,~\vpi_{\rho,N}]\mc
F_{t,x}(f)(\tau,\xi,v)=[{\widetilde\Lambda}^{\sigma}_v,~\vpi_{\rho,N}]
H(v) .
\end{equation}
Observe that the desired inequality \reff{+com1} will follow if we
show that, for each fixed pair $(\tau,\xi),$
\begin{equation}\label{+++com1}
  \norm{\bigcom{\widetilde\Lambda_v^\sigma,\:\varphi_{\rho,N}(\cdot)}
  H(\cdot)}_{H^\kappa(\Real^n_v)}
  \leq C\Big\{\inner{N/\rho}^\sigma\norm{
  H(\cdot)}_{H^\kappa(\Real^n_v)}
  +\inner{N/\rho}^{\kappa+\sigma}
  \norm{H(\cdot)}_{L^2(\Real^n_v)}\Big\}.
\end{equation}
Indeed, a direct computation yields that
\begin{align*}
&\norm{h}_\kappa^2=\int_{\bb
R^{2n+1}}(1+\tau^2+|\xi|^2+|\eta|^2)^\kappa \big|\hat{
h}(\tau,\xi,\eta)\big|^2d\tau d\xi d\eta\\
&\leq C\int_{\bb
R^{2n+1}}\set{(1+\tau^2+|\xi|^2)^\kappa+|\eta|^{2\kappa}} \big|\hat{
h}(\tau,\xi,\eta)\big|^2d\tau d\xi d\eta\\
&\leq C\int_{\bb R^{n+1}}(1+\tau^2+|\xi|^2)^\kappa\inner{\int_{\bb
R^{n}}\inner{1+|\eta|^2}^{\kappa}\big|\hat{ h}(\tau,\xi,\eta)\big|^2
d\eta}d\tau d\xi\\
&=C\int_{\bb
R^{n+1}}(1+\tau^2+|\xi|^2)^\kappa\inner{\norm{\bigcom{\widetilde\Lambda_v^\sigma,
\:\varphi_{\rho,N}(\cdot)}
  H_{\tau,\xi}(\cdot)}_{H^\kappa(\Real^n_v)}}d\tau d\xi.
\end{align*}
This along with \reff{+++com1} yields the desired inequality
\reff{+com1}.

Next, we shall prove \reff{+++com1}.  First, for any
$g\in\cS(\Real^{n}),$ we have
\begin{equation}\label{FourierM}
  |D_v|^\sigma\, g(v)=C_\sigma\int_{\Real^n}\frac{g(v)-g(v-\tilde
 v)}{\abs
  {\tilde v}^{n+\sigma}}\,d\tilde v
\end{equation}
with $C_\sigma\neq 0$ being a complex constant depending only on
$\sigma$ and the dimension $n.$

In fact,
\[
  \int_{\Real^n}\frac{g(v)-g(v-\tilde v)}{\abs
  {\tilde v}^{n+\sigma}}\,d\tilde v=\int_{\Real^n}\mc
  F_v(g)(\eta)\,e^{i\,v\cdot\eta}\left(
  \int_{\Real^n}\frac{1-e^{-i\,\tilde v\cdot\eta}}{\abs
  {\tilde v}^{n+\sigma}}\,d\tilde v\right)\,d\eta
\]
On the other hand, it is clear that
\[
  \int_{\Real^n}\frac{1-e^{-i\,\tilde v\cdot\eta}}{\abs
  {\tilde v}^{n+\sigma}}\,d\tilde v=\abs\eta^\sigma \int_{\Real^n}
  \frac{1-e^{-i\, u\cdot\frac{\eta}{\abs\eta}}}{\abs
  {u}^{n+\sigma}}\,d u.
\]
Observe that $\int_{\Real^n}\frac{1-e^{i\,
u\cdot\frac{\eta}{\abs\eta}}}{\abs
  {u}^{n+\sigma}}\,d u \neq 0$ is a complex constant depending only on
 $\sigma$ and
the dimension $n$, but independent of $\eta.$  Then the above two
equalities give \reff{FourierM}.

Next, we use \reff{FourierM} to get
\begin{align*}
  &|D_v|^\sigma\,\biginner{H(v)\varphi_{\rho,N}(v)}=
  C_\sigma\int_{\Real^n}\frac{H(v)\varphi_{\rho,N}(v)
  -H(v-\tilde v)\varphi_{\rho,N}(v-\tilde v)}{\abs
  {\tilde v}^{n+\sigma}}\,d\tilde v\\
  &=\varphi_{\rho,N}(v)|D_v|^\sigma\, H(v)
  +C_\sigma\int_{\Real^n}\frac{
  H(v-\tilde v)\biginner{\varphi_{\rho,N}(v)
  -\varphi_{\rho,N}(v-\tilde v)}}{\abs
  {\tilde v}^{n+\sigma}}\,d\tilde v,
\end{align*}
which gives that
\begin{align}\label{formula}
  \bigcom{|D_v|^\sigma,\:\varphi_{\rho,N}(v)}H(v)=
  C_\sigma\int_{\Real^n}\frac{H(v-\tilde
 v)\biginner{\varphi_{\rho,N}(v)
  -\varphi_{\rho,N}(v-\tilde v)}}{\abs
  {\tilde v}^{n+\sigma}}\,d\tilde v.
\end{align}
Let $\widetilde\chi_{\rho/N}$ be the characteristic function of the
set $\set{v;\: \abs v\leq \rho/N}.$ By the above expression, we
compute
\begin{align*}
  &\|\bigcom{|D_v|^\sigma,\:\varphi_{\rho,N}}H\|_{L^2(\Real^n_v)}^2=
  |C_\sigma|^2\int_{\Real^n}\abs{\int_{\Real^n}\frac{H(v-\tilde
 v)\biginner{\varphi_{\rho,N}(v)
  -\varphi_{\rho,N}(v-\tilde v)}}{\abs
  {\tilde v}^{n+\sigma}}\,d\tilde v}^2dv\\
  &\leq
 2|C_\sigma|^2\int_{\Real^n}\abs{\int_{\Real^n}\frac{\widetilde{\chi}_{\rho/N}(\tilde v)
   H(v-\tilde v)\biginner{\varphi_{\rho,N}(v)
  -\varphi_{\rho,N}(v-\tilde v)}}{\abs
  {\tilde v}^{n+\sigma}}\,d\tilde v}^2dv\\
  &\indent+2|C_\sigma|^2\int_{\Real^n}\abs{\int_{\Real^n}\frac{\inner{1-
  \widetilde{\chi}_{\rho/N}(\tilde v)}
  H(v-\tilde v)\biginner{\varphi_{\rho,N}(v)
  -\varphi_{\rho,N}(v-\tilde v)}}{\abs
  {\tilde v}^{n+\sigma}}\,d\tilde v}^2dv\\
  &\leq C\biginner{\sup\abs{\partial_v\,\varphi_{\rho,N}}}^2
\int_{\Real^n}\inner{\int_{\Real^n}\frac{\widetilde{\chi}_{\rho/N}(\tilde
v)
  \abs{H(v-\tilde v)}}{\abs
  {\tilde v}^{n+\sigma-1}}\,d\tilde v}^2dv\\
  &\indent+C\biginner{\sup\abs{\varphi_{\rho,N}}}^2
\int_{\Real^n}\inner{\int_{\Real^n}\frac{\inner{1-\widetilde{\chi}_{\rho/N}(\tilde
v)}
  \abs{H(v-\tilde v)}}{\abs
  {\tilde v}^{n+\sigma}}\,d\tilde v}^2dv\\
  &=:\mc A_1+\mc A_2,
\end{align*}
For the term $\mc A_1$, Young's inequality for convolutions gives
\[
\int_{\Real^n}\inner{\int_{\Real^n}\frac{\widetilde{\chi}_{\rho/N}(\tilde
v)
  \abs{H(v-\tilde v)}}{\abs
  {\tilde v}^{n+\sigma-1}}\,d\tilde v}^2dv\leq
  \norm{H}_{L^2(\Real_v)}^2\Big\|\frac{\widetilde\chi_{\rho/N}(
 v)}{\abs
  { v}^{n+\sigma-1}}\Big\|_{L^1(\Real_v)}^2.
\]
Then \reff{cutoff} with $|\alpha|=1$ and the following inequality
\[
  \norm{\frac{\widetilde\chi_{\rho/N}( v)}{\abs
  { v}^{n+\sigma-1}}}_{L^1(\Real_v)}^2\leq
 C\left(\int_0^{\rho/N}\frac{dr}{r^{\sigma}}\right)^2
  \leq C\inner{\rho/N}^{2(1-\sigma)}
\]
deduce that
\[
  \mc A_1\leq C
  \inner{N/\rho}^{2\sigma}\norm{H}_{L^2(\Real^n_v)}^2.
\]
Similarly, we can use \reff{cutoff}  with $|\alpha|=0$ and the
inequality
\begin{align*}
 \int_{\Real^n}\inner{\int_{\Real^n}\frac{\inner{1-\widetilde{\chi}_{\rho/N}(\tilde
v)}
  \abs{H(v-\tilde v)}}{\abs
  {\tilde v}^{n+\sigma}}\,d\tilde v}^2dv&\leq
  \norm{H}_{L^2(\Real_v)}^2\Big\|\frac{1-\widetilde\chi_{\rho/N}(
 v)}{\abs
  { v}^{n+\sigma}}\Big\|_{L^1(\Real_v)}^2\\
  &\leq C\inner{\rho/N}^{-2\sigma}\norm{H}_{L^2(\Real_v)}^2
\end{align*}
to get
\[
 \mc A_2\leq C
  \inner{N/\rho}^{2\sigma}\norm{H}_{L^2(\Real^n_v)}^2.
\]
On the other hand, it is trivial to see
\[
  \|\bigcom{\left(|D_v|^\sigma-\widetilde\Lambda_v^\sigma\right),
  \:\varphi_{\rho,N}}H\|_{L^2(\Real^n_v)}
  \leq C\norm{H}_{L^2(\Real^n_v)}.
\]
Now we combine these inequalities to conclude
\begin{equation}\label{0norm}
\|\bigcom{\widetilde\Lambda_v^\sigma,\:\varphi_{\rho,N}}H\|_{L^2(\Real^n_v)}
  \leq C\inner{N/\rho}^\sigma\norm{H}_{L^2(\Real^n_v)}.
\end{equation}

Next we treat $\|\bigcom{\widetilde\Lambda_v^\sigma,
\:\varphi_{\rho,N}}H\|_{H^\kappa(\Real^n_v)}.$ Similar to the above
argument,  we study only the commutator
$\|\bigcom{|D_v|^\sigma,\:\varphi_{\rho,N}}H\|_{H^\kappa(\Real^n_v)}.$
First, we consider the case when $\kappa$ is a positive integer. Let
$\alpha$ be an arbitrary multi-index with $\abs\alpha\leq \kappa.$
Then taking derivatives in \reff{formula}, and then using Leibnitz's
formula; we get
\begin{eqnarray*}
 &&
 \partial_v^\alpha\left(\bigcom{|D_v|^\sigma,\:\varphi_{\rho,N}(v)}H(v)\right)\\
 && =
C_\sigma\sum_{\beta\leq\alpha}C_\alpha^\beta\int_{\Real^n}\frac{\left(
  \partial_v^\beta H(v-\tilde v)\right)
  \cdot\left(\partial_v^{\alpha-\beta}\biginner{\varphi_{\rho,N}(v)
  -\varphi_{\rho,N}(v-\tilde v)}\right)}{\abs
  {\tilde v}^{n+\sigma}}\,d\tilde v.
\end{eqnarray*}
Thus  similar arguments as above show that
\begin{align*}
\norm{\partial_v^\alpha\Biginner{\bigcom{|D_v|^\sigma,\:
\varphi_{\rho,N}(v)}H(v)}}_{L^2(\Real^n_v)}
  \leq
 C\sum_{\beta\leq\alpha}\inner{N/\rho}^{\abs{\alpha-\beta}+\sigma}
  \norm{\partial_v^\beta H}_{L^2(\Real^n_v)}.
\end{align*}
Together with the interpolation inequality \reff{interpolation}, we
obtain
\begin{align*}
& \norm{\partial_v^\alpha\left(\bigcom{|D_v|^\sigma,\:
\varphi_{\rho,N}(v)}H(v)\right)}_{L^2(\Real^n_v)}
 \\
 & \leq C\set{\inner{N/\rho}^\sigma\norm{H}_{H^\kappa(\Real^n_v)}
  +\inner{N/\rho}^{\abs{\alpha}+\sigma}
  \norm{H}_{L^2(\Real^n_v)}}.
\end{align*}
Since $\alpha, \abs\alpha\leq\kappa,$ is arbitrary,   we conclude
\begin{align*}
  \norm{\bigcom{\abs{D_v}^\sigma,\:\varphi_{\rho,N}(v)}H(v)}_{H^\kappa(\Real^n_v)}
  \leq
 C\set{\inner{N/\rho}^\sigma\norm{H}_{H^\kappa(\Real^n_v)}+\inner{N/\rho}^{\kappa+\sigma}
  \norm{H}_{L^2(\Real^n_v)}}.
\end{align*}
This implies \reff{+++com1}, when $\kappa$ is a positive integer.

Now we consider the case when  $\kappa$ is not a integer.  Without
loss of generality, we may assume $0<\kappa<1.$ Write
$\kappa+\sigma=1+\mu.$  Then $0\leq \mu<1,$  and
\begin{align*}
  \norm{\bigcom{\abs{D_v}^{\kappa+\sigma},\:\varphi_{\rho,N}(v)}H(v)}_{L^2(\Real^n_v)}
 &\leq
 \norm{\bigcom{\abs{D_v}^\mu,\:\varphi_{\rho,N}(v)}H(v)}_{H^1(\Real^n_v)}\\
 &\indent
 +\norm{\bigcom{\abs{D_v}^1,\:\varphi_{\rho,N}(v)}\abs{D_v}^\mu
 H(v)}_{L^2(\Real^n_v)}.
\end{align*}
We have treated the first term on the right, that is,
\[
  \norm{\bigcom{\abs{D_v}^\mu,\:\varphi_{\rho,N}(v)}H(v)}_{H^1(\Real^n_v)}\leq
  C\set{\inner{N/\rho}^\mu\norm{H}_{H^1(\Real^n_v)}+\inner{N/\rho}^{1+\mu}
  \norm{H}_{L^2(\Real^n_v)}}.
\]
On the other hand, one has
\[
  \norm{\bigcom{\abs{D_v}^1,\:\varphi_{\rho,N}(v)}\abs{D_v}^\mu
  H(v)}_{L^2(\Real^n_v)}\leq
  C\inner{N/\rho}\norm{H}_{H^\mu(\Real^n_v)}.
\]
For the proof of this estimate, we refer to \cite{Dur} for instance.
Hence
\begin{align*}
 \norm{\bigcom{\abs{D_v}^{\kappa+\sigma},\:\varphi_{\rho,N}(v)}H(v)}_{L^2(\Real^n_v)}
 &\leq
 C\Big\{\inner{N/\rho}^\mu\norm{H}_{H^1(\Real^n_v)}+\inner{N/\rho}^{1+\mu}
  \norm{H}_{L^2(\Real^n_v)}\\
  &\indent+\inner{N/\rho}\norm{H}_{H^\mu(\Real^n_v)}\Big\}.
\end{align*}
Notice that $\kappa\geq 1,$. The interpolation inequality
\reff{interpolation} gives
\begin{align*}
& \norm{\bigcom{\abs{D_v}^{\kappa+\sigma},\:\varphi_{\rho,N}(v)}H(v)}_{L^2(\Real^n_v)}\\
 &\leq
 C\Big\{\inner{N/\rho}^\sigma\norm{H}_{H^\kappa(\Real^n_v)}+\inner{N/\rho}^{\kappa+\sigma}
  \norm{H}_{L^2(\Real^n_v)}\Big\}.
\end{align*}
 Since $0<\kappa<1,$ then
\begin{align*}
& \norm{\bigcom{\abs{D_v}^{\kappa},\:\varphi_{\rho,N}(v)}
\abs{D_v}^{\sigma}H(v)}_{L^2(\Real^n_v)}\\
 &\leq
 C\Big\{\inner{N/\rho}^\kappa\norm{H}_{H^\sigma(\Real^n_v)}+\inner{N/\rho}^{\kappa+\sigma}
  \norm{H}_{L^2(\Real^n_v)}\Big\}\\
 &\leq
 C\Big\{\inner{N/\rho}^\sigma\norm{H}_{H^\kappa(\Real^n_v)}+\inner{N/\rho}^{\kappa+\sigma}
  \norm{H}_{L^2(\Real^n_v)}\Big\}.
\end{align*}
In the last inequality, we have used the interpolation inequality
\reff{interpolation}. The above two inequalities yield that
\begin{align*}
&\norm{\abs{D_v}^{\kappa}\bigcom{\abs{D_v}^{\sigma},
\:\varphi_{\rho,N}(v)}H(v)}_{L^2(\Real^n_v)}\\
&\leq
\norm{\bigcom{\abs{D_v}^{\kappa+\sigma},\:\varphi_{\rho,N}(v)}H(v)}_{L^2(\Real^n_v)}\\
&\indent+\norm{\bigcom{\abs{D_v}^{\kappa},\:
\varphi_{\rho,N}(v)}\abs{D_v}^{\sigma}H(v)}_{L^2(\Real^n_v)}\\
&\leq
 C\Big\{\inner{N/\rho}^\sigma\norm{H}_{H^\kappa(\Real^n_v)}+\inner{N/\rho}^{\kappa+\sigma}
  \norm{H}_{L^2(\Real^n_v)}\Big\}.
\end{align*}
Hence
\begin{align*}
&\norm{\bigcom{\abs{D_v}^{\sigma},\:\varphi_{\rho,N}(v)}H(v)}_{H^\kappa(\Real^n_v)}\\
&\leq C\Big\{\norm{\abs{D_v}^{\kappa}\bigcom{\abs{D_v}^{\sigma},\:
\varphi_{\rho,N}(v)}H(v)}_{L^2(\Real^n_v)}\\
&\indent+\norm{\bigcom{\abs{D_v}^{\sigma},\:
\varphi_{\rho,N}(v)}H(v)}_{L^2(\Real^n_v)}\Big\}\\
&\leq
 C\Big\{\inner{N/\rho}^\sigma\norm{H}_{H^\kappa(\Real^n_v)}+\inner{N/\rho}^{\kappa+\sigma}
  \norm{H}_{L^2(\Real^n_v)}\Big\}.
\end{align*}
This implies \reff{+++com1} for general $\kappa, 1\leq\kappa\leq
n+2,$ and thus \reff{+com1} follows. The inequality \reff{+com2} can
be handled quite similarly. Thus the proof of Lemma \ref{+lem9} is
complete.

\end{proof}


\section{Gevrey regularity of linear operators}
\label{sect3+} \setcounter{equation}{0}

In this section, we prove  the Gevrey hypoellipticity of $\mc P$. We
will follow the idea of M.Durand \cite{Dur}. We consider the
following linear equation
\begin{equation}\label{Fokker-Planck++}
\mc{P}u=\partial_t u+v\cdot\partial_x
u+a(t,x,v)(-\widetilde\triangle_v)^\sigma u=f,\quad(t,x,
v)\in{\Real\times\Real^n\times\bb R^{n}},
\end{equation}
where $ 0<\sigma<1$. From Theorem \ref{th2.1}, any weak solution of
the above equation is in $C^\infty(\bb R^{2n+1})$ if $f\in
C^\infty(\bb R^{2n+1})$ . Hence, we start from a $C^\infty$
solution, and prove the Gevrey hypoellipticity in the following
proposition, where $\Omega$ and  $W=2\Omega$ are open domains of
$\mathbb{R}^{2n+1}$ defined in the section \ref{sect+3}.

\begin{prp}\label{prp4}
Set $\delta=\max\set{{\sigma\over4},~{\sigma\over2}-{1\over6}}$ and
let $s\geq {2\over\delta}$. Suppose the coefficient $ a(t,x,v) \in
G^s(\bar\Omega), a>0,$ and $u\in C^\infty(\bar W)$ be such that
$\mc{P}u=f\in G^s(\bar{\Omega})$. Then there exits a constant $L$
such that for any $r\in[0,1]$ and any $N\in\bb{N}$, $N\geq4,$
$$
(E)_{r, N} \quad\quad
\begin{array} {l}\|\Phi_{\rho,N}D^\alpha
u\|_{r+n+1}+\|\Phi_{\rho,N}{\widetilde\Lambda}^{\sigma} D^\alpha
u\|_{r-\frac{\delta}{2}+n+1}\\
\hskip 1cm \leq \frac{L^{|\alpha|-1}}{\rho^{(s+n)(|\alpha|-3)}}
\big((|\alpha|-3)!\big)^{s}\inner{\frac{N}{\rho}}^{sr}
\end{array}
$$
holds for any $\alpha\in \bb{N}^{2n+1},\:\:|\alpha|= N$ and  any $
0<\rho< 1$. Here and in the sequel we denote
${\widetilde\Lambda}^{\sigma}={\widetilde\Lambda_v}^{\sigma}=(-\widetilde\triangle_v)^{\sigma\over2}$
for simplification.
\end{prp}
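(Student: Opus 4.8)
The plan is to establish the estimate $(E)_{r,N}$ by a double induction: an outer induction on $N = |\alpha|$, and, for each fixed $N$, an inner induction on $r$ running over a discrete mesh of step $\delta/2$ in $[0,1]$ (so that after finitely many steps one covers the whole interval, the noninteger endpoints being handled by interpolation via \reff{interpolation}). The base case $N=4$ (or small $N$) follows because $u\in C^\infty(\bar W)$, so all the norms on the left are finite and one simply chooses $L$ large enough; the factor $\rho^{-(s+n)(|\alpha|-3)}((|\alpha|-3)!)^s$ is harmless for bounded $|\alpha|$. The engine of the induction is the subelliptic estimate \reff{sub0} of Proposition \ref{prp2}, applied to the function $f_\alpha := \Phi_{\rho,N}D^\alpha u$ (which lies in $C_0^\infty$ of a fixed compact set, namely $\bar W$, uniformly in $\rho$ and $N$). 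Writing $\|f_\alpha\|_{r+\delta+n+1}\le C\{\|\mc P f_\alpha\|_{r+n+1}+\|f_\alpha\|_0\}$ and then commuting $\mc P$ past $\Phi_{\rho,N}$ and past $D^\alpha$ reduces everything to controlling a finite list of commutator terms.

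The key step is the commutator bookkeeping. One has
\[
\mc P\,\Phi_{\rho,N}D^\alpha u=\Phi_{\rho,N}D^\alpha\mc P u+\Phi_{\rho,N}[\mc P,D^\alpha]u+[\mc P,\Phi_{\rho,N}]D^\alpha u .
\]
The first term is $\Phi_{\rho,N}D^\alpha f$ with $f\in G^s(\bar\Omega)$, so it is bounded by a Gevrey constant to the power $|\alpha|$ times $(|\alpha|!)^s$, which is absorbed into the right-hand side of $(E)_{r,N}$ (using $N\ge 4$ and the $\rho$-powers). The commutator $[\mc P,D^\alpha]$ involves only $t,x$-derivatives of the coefficient $a$ landing on $\widetilde\Lambda_v^{2\sigma}u$, together with $X_0=\partial_t+v\cdot\partial_x$ whose commutator with $D^\alpha$ produces at most $|\alpha|$ terms of one lower order in $x$; since $a\in G^s$, the Leibniz expansion of $[a\widetilde\Lambda_v^{2\sigma},D^\alpha]$ yields $\sum_{\beta<\alpha}\binom{\alpha}{\beta}(D^{\alpha-\beta}a)\widetilde\Lambda_v^{2\sigma}D^\beta u$, and each summand is estimated by the inductive hypothesis $(E)_{\cdot,|\beta|}$ (note the second term on the left of $(E)$ is exactly designed to control $\Phi\widetilde\Lambda_v^\sigma D^\beta u$, and one more $\widetilde\Lambda_v^\sigma$ costs an $\widetilde\Lambda_v^\sigma$-commutator handled by Lemma \ref{+lem9}). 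For the cutoff commutator $[\mc P,\Phi_{\rho,N}]D^\alpha u$, the $X_0$-part gives $(X_0\Phi_{\rho,N})D^\alpha u$, bounded via \reff{cutoffnorm} by $(N/\rho)\|D^\alpha u\|$-type quantities, i.e. by $(E)_{r',N}$ with $r'=r$ shifted; the hard part is the kinetic part, where the identity displayed in the proof of Proposition \ref{prp2},
\[
[a\widetilde\Lambda_v^{2\sigma},\Phi_{\rho,N}]=2a[\widetilde\Lambda_v^\sigma,\Phi_{\rho,N}]\widetilde\Lambda_v^\sigma+a[\widetilde\Lambda_v^\sigma,[\widetilde\Lambda_v^\sigma,\Phi_{\rho,N}]]+[a,\Phi_{\rho,N}]\widetilde\Lambda_v^{2\sigma},
\]
together with Lemma \ref{+lem9} (inequalities \reff{+com1}, \reff{+com2}) and the Gevrey bound on $a$, produces exactly the gain of one power of $N/\rho$ per unit of Sobolev order that the iteration needs.

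The main obstacle, as always in Gevrey iteration, is the precise tracking of constants so that the induction closes: one must verify that the various $C_K$, $C_{\sigma,n}$, Gevrey constants of $a$ and $f$, and the combinatorial $\binom{\alpha}{\beta}$ factors combine into a single $L^{|\alpha|-1}$ with the $\big((|\alpha|-3)!\big)^s$ and the $\rho^{-(s+n)(|\alpha|-3)}$ weight, while the extra $(N/\rho)^{sr}$ slack (gained because $\delta$ is a genuine positive subelliptic gain) pays for the step in $r$. Concretely, each commutator with $\Phi_{\rho,N}$ costs $(N/\rho)$ but the subelliptic inequality buys back $\delta$ in regularity; choosing the $r$-mesh of width $\delta/2$ and using the extra room makes each inductive step strictly contractive provided $s\ge 2/\delta$, which is where the hypothesis on $s$ enters. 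Once $(E)_{r,N}$ is proved for all $N$ and all $r\in[0,1]$, Sobolev embedding $H^{n+1}\hookrightarrow L^\infty$ applied with $r=0$ on $\Omega$ (where $\Phi_{\rho,N}\equiv1$), after the standard choice $\rho\sim 1/N$, converts $(E)_{0,N}$ into the pointwise Gevrey bound \reff{gevrey} for $u$, completing the proof of Proposition \ref{prp4} and hence of Theorem \ref{th2}.
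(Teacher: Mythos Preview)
Your outline is essentially the paper's own strategy: outer induction on $N$, inner climb in $r$ by steps of $\delta/2$ driven by the subelliptic estimate \reff{sub0}, and commutator control via Lemma~\ref{+lem9} and the Leibniz expansion of $[a,D^\alpha]$. Two points, however, are glossed over and deserve attention.

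First, you do not say how the \emph{base} of the inner $r$-induction, namely $(E)_{0,N}$, is obtained. This step does \emph{not} use the subelliptic estimate at all; it is derived directly from the outer hypothesis $(E)_{1,N-1}$ by writing $D^\alpha=D\,D^\beta$ with $|\beta|=N-1$ and trading the extra derivative for one unit of Sobolev regularity (this is Lemma~\ref{r0} in the paper). It is precisely here that the factor $(N/\rho)^{sr}$ with $r=1$ on the right of $(E)_{1,N-1}$ supplies the extra $(N/\rho)^s\sim(|\alpha|-3)^s/\rho^s$ needed to pass from $((|\alpha|-4)!)^s$ to $((|\alpha|-3)!)^s$ and from $\rho^{-(s+n)(|\alpha|-4)}$ to $\rho^{-(s+n)(|\alpha|-3)}$. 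Without this separate $r=0$ step the inner induction has no anchor: the commutator $[a\widetilde\Lambda_v^{2\sigma},\Phi_{\rho,N}]D^\alpha u$ produces, after Lemma~\ref{+lem9}, terms of the form $(N/\rho)^\sigma\|\Phi_{\tilde\rho,N}\widetilde\Lambda^\sigma D^\alpha u\|_{-\delta/2+n+1}$ still at level $N$, and bounding these requires $(E)_{0,N}$ already in hand.

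Second, your closing remark that one takes ``$\rho\sim 1/N$'' to extract the Gevrey bound is not right and would blow up the factor $\rho^{-(s+n)(|\alpha|-3)}$. The correct procedure (Proposition~\ref{prp5}) fixes a compact $K\subset\Omega$, chooses $\rho_0$ with $K\subset\Omega_{\rho_0}$, and applies $(E)_{0,N}$ with that fixed $\rho_0$; the Gevrey constant on $K$ is then $L/\rho_0^{s+n}$.
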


\begin{rmk}
 Here the Gevrey constant $L$ of $u$ is  determined by the
Gevrey constants $B_a$ and $B_f$ of the functions $a, f\in
G^s(\bar{\Omega})$, and depends only on $s, \sigma, n,
\norm{u}_{H^{n+6}(W)}$ and $\norm{a}_{C^{2n+2}(\Omega)}.$ This can
be seen in the  proof of Lemma \ref{tech}, Lemma \ref{r0} and Lemma
\ref{r1/3}. .
\end{rmk}

\bigbreak  As an immediate consequence of the above  proposition, we
have

\begin{prp}\label{prp5}

Under the same assumption as in Proposition \ref{prp4},  we have
$u\in G^s({\Omega}).$
\end{prp}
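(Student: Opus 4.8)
\textbf{Proof plan for Proposition \ref{prp5}.} The plan is to deduce the Gevrey estimate \reff{gevrey} on $\Omega$ directly from the family of iterated estimates $(E)_{r,N}$ in Proposition \ref{prp4}. The point is that $(E)_{r,N}$ holds for \emph{all} $0<\rho<1$ and all $N\geq 4$, so for a fixed multi-index $\alpha$ with $|\alpha|=N$ we are free to optimize over $\rho$. The natural choice, which is the standard device in Durand's scheme, is to take $\rho$ comparable to $1/N$ (say $\rho=1/N$ or $\rho = c_0/N$ for a fixed small $c_0$, chosen so that $\Omega_\rho \supset K$ for the given compact $K\subset\Omega$ once $N$ is large), and to use the first term on the left-hand side of $(E)_{r,N}$ together with $\Phi_{\rho,N}\equiv 1$ on $\Omega_\rho$.

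First I would fix a compact $K\subset\Omega$ and choose $N_0$ so large that $K\subset\Omega_{1/N}$ for all $N\geq N_0$; for such $N$ the cutoff $\Phi_{1/N,N}$ equals $1$ on a neighbourhood of $K$, hence $\Phi_{1/N,N}D^\alpha u = D^\alpha u$ on $K$. Taking $r=0$ (or any fixed $r\in[0,1]$) in $(E)_{0,N}$ with $\rho = 1/N$ gives
\[
\norm{D^\alpha u}_{H^{n+1}(K)} \leq \norm{\Phi_{1/N,N}D^\alpha u}_{n+1}
\leq L^{|\alpha|-1}\, N^{(s+n)(|\alpha|-3)}\,\big((|\alpha|-3)!\big)^{s}.
\]
Since $|\alpha|=N$, one has $N^{(s+n)(N-3)} = N^{(N-3)s}\cdot N^{n(N-3)}$, and by Stirling's formula $N^{N-3}\leq C^{N}(N-3)!$ and $N^{n(N-3)}\leq \bigl(C^{N}(N-3)!\bigr)^{n}$, so that $N^{(s+n)(N-3)}\leq C_1^{|\alpha|}\bigl((|\alpha|-3)!\bigr)^{s+n}\leq C_1^{|\alpha|}\bigl((|\alpha|)!\bigr)^{s+n}$. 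Absorbing constants, this yields $\norm{D^\alpha u}_{H^{n+1}(K)}\leq C^{|\alpha|+1}(|\alpha|!)^{s+n}$ for all $\alpha$ (the finitely many $\alpha$ with $|\alpha|<N_0$ being trivially bounded since $u\in C^\infty(\bar W)$). Finally, the Sobolev embedding $H^{n+1}(\RR^{2n+1})\hookrightarrow L^\infty$ converts this into the sup-norm bound $\norm{\partial^\alpha u}_{L^\infty(K)}\leq C^{|\alpha|+1}(|\alpha|!)^{s+n}$.

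The one subtlety is that the naive exponent obtained this way is $s+n$ rather than $s$; here I would be more careful and exploit the $\inner{N/\rho}^{sr}$ factor together with the interpolation/splitting inherent in $(E)_{r,N}$, or rather choose $\rho$ slightly differently — namely $\rho = N^{-1}$ feeds the loss $\rho^{-(s+n)(|\alpha|-3)}$, and one checks that with $|\alpha|=N$ the genuine growth rate is $\bigl(N^{N}\bigr)^{s}$ up to $C^{N}$, i.e.\ $(N!)^{s}$ up to $C^{N}$, because the extra polynomial factor $N^{n(N-3)}$ is itself only $(N!)^{o(1)}\cdot$ ... — more precisely $N^{nN}$ is \emph{not} absorbable into $(N!)^{s}C^N$, so the honest statement requires using that $(E)_{r,N}$ is stated with the $H^{r+n+1}$ norm and the loss $\rho^{-(s+n)(|\alpha|-3)}$ is calibrated so that, after choosing $\rho\sim N^{-1}$ and invoking the equivalence of \reff{gevrey} with its $L^2$ form (stated in the Introduction, allowing $|\alpha|!$ in place of $\alpha!$ and an extra fixed power of $|\alpha|$), one lands exactly in $G^s$. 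I expect the extraction of the clean exponent $s$ from the bookkeeping — reconciling the $(s+n)$ in the exponent of $\rho^{-1}$ with the target class $G^s$ via the $\rho$-optimization and the $L^2$-vs-$L^\infty$ form of the Gevrey condition — to be the only real point needing care; everything else is Stirling's formula, the defining property $\Phi_{\rho,N}|_{\Omega_\rho}\equiv 1$, and Sobolev embedding.
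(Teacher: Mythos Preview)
Your proposal contains a genuine error in the choice of $\rho$, and this is precisely the point where you flag a ``subtlety'' without resolving it. You take $\rho=1/N$, i.e.\ you let $\rho$ depend on $N$, and this produces the loss $\rho^{-(s+n)(|\alpha|-3)}=N^{(s+n)(N-3)}$, which by Stirling is of order $\bigl((N-3)!\bigr)^{s+n}$ up to $C^{N}$. That extra factor $\bigl((N-3)!\bigr)^{n}$ is \emph{not} absorbable into $(|\alpha|!)^{s}C^{|\alpha|}$, and none of the remedies you sketch (the $(N/\rho)^{sr}$ factor, a different $r$, interpolation) can recover the missing exponent --- the $r$-dependence only contributes a fixed power $(N/\rho)^{sr}$ with $r\in[0,1]$, which is negligible compared with $N^{n(N-3)}$.

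The correct move, and the one the paper makes, is to keep $\rho$ \emph{fixed} independently of $N$. By the construction in \reff{cutoff} one has $\Phi_{\rho,N}\equiv 1$ on $\Omega_{\rho}$ for \emph{every} $N$, so given a compact $K\subset\Omega$ one simply picks a single $\rho_0\in(0,1)$ with $K\subset\Omega_{\rho_0}$ and applies $(E)_{0,N}$ with this $\rho_0$ for all $N\geq 4$. Then
\[
\|D^\alpha u\|_{L^2(K)}\leq\|\Phi_{\rho_0,N}D^\alpha u\|_{n+1}
\leq \frac{L^{|\alpha|-1}}{\rho_0^{(s+n)(|\alpha|-3)}}\bigl((|\alpha|-3)!\bigr)^{s}
\leq \Bigl(\frac{L}{\rho_0^{s+n}}\Bigr)^{|\alpha|}(|\alpha|!)^{s},
\]
and $C_K=L\rho_0^{-(s+n)}+\|u\|_{C^4(K)}$ gives the Gevrey bound directly, in the $L^2$ form recalled in the introduction. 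No optimization over $\rho$, no Stirling, no Sobolev embedding is needed; the point of carrying the exponent $(s+n)$ on $\rho^{-1}$ throughout the induction is exactly that, once $\rho$ is frozen, it becomes a harmless geometric constant $C_K^{|\alpha|}$.
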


Indeed, for any compact subset $K$ of $\Omega$, we have
$K\subset\Omega_{\rho_0}$ for some $\rho_0, ~0<\rho_0<1$. Then for
any $\alpha\in {\bb N}^{2n+1}, ~~|\alpha|=N\geq4$,  $(E)_{0, N}$
gives
\begin{equation*}
\begin{array}{lll}\|D^\alpha u\|_{L^2(K)}&\leq&\|\Phi_{\rho_0,N}D^\alpha
u\|_{n+1} \leq
\frac{L^{|\alpha|-1}}{{\rho_0}^{(s+n)(|\alpha|-3)}}\big((|\alpha|-3)!\big)^{s}
\leq \big({L\over{\rho_0}^{s+n}}\big)^{|\alpha|}(|\alpha|!)^s.
\end{array}
\end{equation*}
Taking $C_K={L\over{\rho_0}^{s+n}}+\norm u_{C^4(K)},$ then for all
$\alpha,$
\[
  \|D^\alpha u\|_{L^2(K)}\leq C_K^{\abs\alpha+1}(|\alpha|!)^s.
\]
The conclusion of Proposition \ref{prp5} follows.

\bigskip

\smallbreak \emph{Proof of Proposition \ref{prp4}.}  We prove the
esitimate $(E)_{r, N}$ by induction on $N$. In the proof, we use
$C_n$ to denote  constants which depend only on $n$, which may be
different in different contexts. Let $\Phi$ be an arbitrary fixed
function compactly supported in $W$ such that $\Phi=1$ in $\Omega.$
First, we prove the first step of the induction for $N=4$. For all
$\abs\alpha=4,$ we use \reff{cutoffnorm} in Lemma \ref{3.1} to
compute
\begin{align*}
    &\|\Phi_{\rho,3}D^\alpha u\|_{r+n+1}+
    \|\Phi_{\rho,3}{\widetilde\Lambda}^{\sigma} D^\alpha
    u\|_{r-\frac{\delta}{2}+n+1}\\
    &\leq C_n \inner{\frac{3}{\rho}}^{n+2}\set{\|\Phi D^\alpha u\|_{r+n+1}+
    \|\Phi {\widetilde\Lambda}^{\sigma} D^\alpha
    u\|_{r-\frac{\delta}{2}+n+1}},
\end{align*}
On the other hand , since $\abs\alpha=4,$
\[
   \|\Phi D^\alpha u\|_{r+n+1}+
    \|\Phi {\widetilde\Lambda}^{\sigma} D^\alpha
    u\|_{r-\frac{\delta}{2}+n+1}\leq C_n \|u\|_{H^{n+6}(W)}.
\]
The term on the left side is bounded by the smoothness of $u.$
Combing these, we obtain

\[
  (E)_{r, 4} \qquad  \|\Phi_{\rho,3}D^\alpha u\|_{r+n+1}+
  \|\Phi_{\rho,3}{\widetilde\Lambda}^{\sigma} D^\alpha
u\|_{r-\frac{\delta}{2}+n+1} \leq
\frac{C_n\|u\|_{H^{n+6}(W)}}{\rho^{(n+2)}}\leq
\frac{L^3_0}{\rho^{s+n}}\,\,.
\]
Thus $(E)_{r, 4}$ is true if we take $L\geq
C_n\|u\|_{H^{n+6}(W)}+1.$ Let now $N>4$ and assume that $(E)_{r,
N-1}$ holds for any $r\in[0, 1]$. We  need to show $(E)_{r, N}$
still holds with a constant $L$ independents of $N$ or $r\in[0,~1]$.
We denote
\[
\|D^ju\|_r=\sum_{|\gamma|=j}\|D^\gamma u\|_r.
\] In the
following discussion, we fix $N.$ For each $0<\rho<1,$ define
$\tilde\rho={{N-1}\over N}\rho, \: \tilde{\tilde\rho}={{N-2}\over
N}\rho.$ Let $\Phi_{\rho,N}$ be the cutoff function constructed in
the previous section which satisfies the property \reff {cutoff}.
The following fact will be used frequently, for $k=1,2,\cdots, N$
with $N\geq 4,$
\begin{equation}\label{fre}
{1\over{\rho}^{(s+n)k}}\leq{1\over{\tilde\rho}^{(s+n)k}}
\leq{1\over{\tilde{\tilde\rho}}^{(s+n)k}} ={1\over{\rho}^{(s+n)k}}
\times\big({N\over{N-2}}\big)^{(s+n)k} \leq
{36^{s+n}\over{\rho}^{(s+n)k}}.
\end{equation}

We shall proceed to prove the truth of $(E)_{r, N}$  by the
following four lemmas. The first one is a technical lemma, and the
second lemma is devoted to the proof of the truth of $(E)_{r, N}$
for $r=0.$ In the third one, we prove that $(E)_{r,N}$ holds for
$0\leq r\leq{\delta\over 2},$ and in the last one we prove that
$(E)_{r, N}$ holds for all $r$ with $0\leq r\leq1.$

\begin{lem}\label{tech}
  Let $s\geq 3$ be a given real number and  $k\geq 5$ be any given integer.
  Assume the estimate $(E)_{0,m}$ holds, i.e.
  \begin{equation}\label{assum}
    \norm{\Phi_{\rho,m}D^\gamma u}_{n+1}+
    \norm{\Phi_{\rho,m}\widetilde \Lambda^\sigma D^\gamma
    u}_{-\frac{\delta}{2}+n+1}
    \leq\frac{L^{m-1}}{\rho^{(s+n)(m-3)}}\big((m-3)!\big)^{s}
  \end{equation}
holds for all $\gamma$ with $\abs\gamma=m<k,$ and all $0<\rho<1.$
Then if $L\geq 4^{n+3}(\norm{u}_{H^{n+6}(W)}+1),$ one has, for all
$\beta$ with $\abs\beta=k,$
  \begin{equation}\label{desired}
    (k/\rho)^{n+3}\norm{\Phi_{\rho,k}D^\beta u}_{0}+(k/\rho)^{n+3}
    \norm{\Phi_{\rho,k}\widetilde \Lambda^\sigma D^\beta
    u}_{0}
    \leq\frac{L^{k-2}}{\rho^{(s+n)(k-3)}}\big((k-3)!\big)^{s}.
  \end{equation}
\end{lem}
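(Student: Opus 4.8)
The plan is to bound the two quantities $\norm{\Phi_{\rho,k}D^\beta u}_0$ and $\norm{\Phi_{\rho,k}\widetilde\Lambda^\sigma D^\beta u}_0$ by comparing the cutoff $\Phi_{\rho,k}$ associated to the level $k$ with a cutoff $\Phi_{\tilde\rho,k-1}$ associated to the level $k-1$ (recall $\tilde\rho = \frac{N-1}{N}\rho$, and here $N=k$), so that the inductive hypothesis $(E)_{0,m}$ with $m=k-1<k$ can be applied. First I would pick a multi-index $\beta$ with $\abs\beta=k$ and write $\beta=\gamma+e_\ell$ for some $\abs\gamma=k-1$ and some coordinate direction $e_\ell$. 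Then $D^\beta u = D_\ell D^\gamma u$, and the idea is to move the derivative $D_\ell$ through the cutoff at the cost of a commutator term. Concretely, since $\Phi_{\tilde\rho,k-1}=1$ on the support of $\Phi_{\rho,k}$ (both localize near $\Omega_\rho$, and the supports are nested by the construction in Section~\ref{sect+3}), I can insert $\Phi_{\tilde\rho,k-1}$ and write schematically
\[
\Phi_{\rho,k}D^\beta u = \Phi_{\rho,k}D_\ell\inner{\Phi_{\tilde\rho,k-1}D^\gamma u}
-\Phi_{\rho,k}\inner{D_\ell\Phi_{\tilde\rho,k-1}}D^\gamma u,
\]
and similarly for the $\widetilde\Lambda^\sigma$ term, using that $\widetilde\Lambda^\sigma$ acts only in $v$ and commutes with $D_\ell$ when $\ell$ is a $t$ or $x$ direction, while for $\ell$ a $v$ direction I would instead use Lemma~\ref{+lem9} to commute $\widetilde\Lambda^\sigma$ past $\partial_{v_\ell}$ — but since we only have a single extra derivative, the cleaner route is to bound $\norm{\Phi_{\rho,k}D^\beta u}_0$ by $\norm{\Phi_{\tilde\rho,k-1}D^\gamma u}_1$ plus a commutator, and likewise $\norm{\Phi_{\rho,k}\widetilde\Lambda^\sigma D^\beta u}_0$ by $\norm{\Phi_{\tilde\rho,k-1}\widetilde\Lambda^\sigma D^\gamma u}_1$ plus commutators.

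**Main estimates.** The $H^1$-norm on the right, however, is stronger than the $H^{n+1}$-superscript bookkeeping in $(E)_{0,k-1}$ would naively give; the point of carrying the index $n+1$ (and $-\frac{\delta}{2}+n+1$) in $(E)_{r,N}$ is precisely that $n+1\geq 1$, so $\norm{\Phi_{\tilde\rho,k-1}D^\gamma u}_1\leq \norm{\Phi_{\tilde\rho,k-1}D^\gamma u}_{n+1}$, which is controlled by $(E)_{0,k-1}$. Thus I would apply Lemma~\ref{3.1} (the cutoff-norm inequality \reff{cutoffnorm}) to pass from $\Phi_{\tilde\rho,k-1}$ to the commutator-generated lower-order pieces — these produce factors of $(k/\tilde\rho)^{\abs\gamma}$ with $\abs\gamma\leq 2$ and $(k/\tilde\rho)^{\abs\gamma+\mu}$ with $\mu\leq n+2$, exactly as in \reff{cutoffnorm} — and then invoke $(E)_{0,k-1}$, namely
\[
\norm{\Phi_{\tilde\rho,k-1}D^\gamma u}_{n+1}+\norm{\Phi_{\tilde\rho,k-1}\widetilde\Lambda^\sigma D^\gamma u}_{-\frac{\delta}{2}+n+1}
\leq\frac{L^{k-2}}{\tilde\rho^{(s+n)(k-4)}}\big((k-4)!\big)^s.
\]
Collecting everything, the left side of \reff{desired} is bounded by $(k/\rho)^{n+3}$ times $(k/\rho)^{C}$ (a bounded power, $C\leq n+2$ coming from the commutators) times $\frac{L^{k-2}}{\tilde\rho^{(s+n)(k-4)}}\big((k-4)!\big)^s$. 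Then I would use \reff{fre} to replace $\tilde\rho$ by $\rho$ up to a harmless constant $36^{s+n}$, and absorb the polynomial-in-$k$ factors $(k/\rho)^{n+3+C}$ into the gap $\big((k-3)!\big)^s / \big((k-4)!\big)^s = (k-3)^s$ together with the extra power $\rho^{-(s+n)}$ available in passing from exponent $k-4$ to $k-3$: since $s\geq 3\geq n+3+C$ eventually dominates (this is where $s\geq\frac{2}{\delta}$, hence $s$ large, is used), $(k-3)^s$ beats the polynomial factor, and one extra copy of $L$ absorbs the remaining numerical constants (this is the role of the hypothesis $L\geq 4^{n+3}(\norm{u}_{H^{n+6}(W)}+1)$, which also seeds the base case already verified as $(E)_{r,4}$).

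**Main obstacle.** The delicate point — and the step I expect to be the real obstacle — is the careful counting of powers of $N/\rho$ (equivalently $k/\rho$): one must verify that after accounting for (i) the explicit $(k/\rho)^{n+3}$ on the left of \reff{desired}, (ii) the commutator factors from Lemma~\ref{3.1}, and (iii) the loss $\tilde\rho\to\rho$ governed by \reff{fre}, the total excess power of $k$ is no larger than $s$, so that it is swallowed by a single factor $(k-3)^s$ arising from $\big((k-3)!\big)^s$ versus $\big((k-4)!\big)^s$, while the single extra factor $L$ (from $L^{k-2}$ in the hypothesis matching $L^{k-2}$ in \reff{desired}) must absorb all remaining dimensional constants uniformly in $k$ and $\rho$. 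Since this lemma is stated purely for the \emph{free} derivatives $D^\beta u$ (with no contribution yet from $\mc{P}u=f$ or from the coefficient $a$ — those enter in the subsequent Lemmas~\ref{r0},~\ref{r1/3} and the main induction), the argument is essentially a bookkeeping exercise combining Lemma~\ref{3.1}, the inductive hypothesis, and \reff{fre}; no new analytic input beyond Section~\ref{sect+3} is needed, and I would present it as such.
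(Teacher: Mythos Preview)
Your strategy of dropping a single derivative and invoking $(E)_{0,k-1}$ does not close. The power count you flag as ``the delicate point'' in fact fails: applying $(E)_{0,k-1}$ at $\tilde\rho$ gives
\[
\norm{\Phi_{\tilde\rho,k-1}D^\gamma u}_{n+1}\leq \frac{L^{k-2}}{\tilde\rho^{(s+n)(k-4)}}\big((k-4)!\big)^s,
\]
and after \reff{fre} the gap between this and the target \reff{desired} is exactly one factor of $(k-3)^s$ from the factorial and one factor of $\rho^{-(s+n)}$ from the $\rho$-exponent. But you must absorb $(k/\rho)^{n+3}$, so you need $k^{n+3}\leq C(k-3)^s$ uniformly in $k$, which forces $s\geq n+3$. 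The lemma assumes only $s\geq 3$, and your parenthetical ``$s\geq 3\geq n+3+C$'' is simply false for $n\geq 1$. Moreover, both the hypothesis at level $k-1$ and the conclusion carry $L^{k-2}$, so there is \emph{no} spare power of $L$ to absorb the numerical constants from \reff{fre} and the commutators; your claim of ``one extra copy of $L$'' is a bookkeeping error.

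The paper's fix is to drop $n+1$ derivatives at once rather than one. For $|\beta|=k>n+4$ choose $\tilde\beta\leq\beta$ with $|\tilde\beta|=n+1$; since $\Phi_{\frac{(k-1)\rho}{k},\,k-n-1}=1$ on $\mathrm{supp}\,\Phi_{\rho,k}$,
\[
\norm{\Phi_{\rho,k}D^\beta u}_0=\norm{\Phi_{\rho,k}D^{\tilde\beta}\Phi_{\frac{(k-1)\rho}{k},\,k-n-1}D^{\beta-\tilde\beta}u}_0
\leq \norm{\Phi_{\frac{(k-1)\rho}{k},\,k-n-1}D^{\beta-\tilde\beta}u}_{n+1},
\]
and now $(E)_{0,k-n-1}$ applies with right side $\frac{L^{k-n-2}}{\rho^{(s+n)(k-n-4)}}\big((k-n-4)!\big)^s$. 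This buys a factorial gain of order $k^{(n+1)s}$, which swallows $k^{n+3}$ already for $s\geq 3$ (indeed for $s\geq\frac{n+3}{n+1}$), and it frees $n$ powers of $L$ to absorb the dimensional constants. No commutators with the cutoff are needed at all---the argument is a two-line chain of inequalities once you peel off $n+1$ derivatives instead of one. The case $5\leq k\leq n+4$ is handled directly by $\norm{u}_{H^{n+6}(W)}$.
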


\begin{proof}
  Without loss of generality, we may assume $k>n+4,$ for, otherwise, in the case when
  $5\leq k\leq n+4 $, it is obvious that for all $\beta$ with $\abs\beta=k\leq n+4,$
  \begin{eqnarray*}
   & (k/\rho)^{n+3}\norm{\Phi_{\rho,k}D^\beta u}_{0}+(k/\rho)^{n+3}
    \norm{\Phi_{\rho,k}\widetilde \Lambda^\sigma D^\beta
    u}_{0}&\\
   & \leq (1/\rho)^{(s+n)(k-3)}2^{(n+3)k}\norm{u}_{H^{n+6}(W)}.&
  \end{eqnarray*}
  Then the desired inequality \reff{desired} follows if $L\geq
  4^{n+3}\inner{\norm{u}_{H^{n+6}(W)}+1}.$

  \vspace{0.5ex}
  Now for all $\beta, \abs\beta=k>n+4,$ we can find a
  multi-index $\tilde\beta\leq\beta$ such that $|\tilde\beta|=n+1.$
  First we treat $(k/\rho)^{n+3}\norm{\Phi_{\rho,k}D^\beta u}_{0}.$
  Since $\Phi_{\frac{(k-1)\rho}{k}, k-n-1}=1$ in Supp\,$\Phi_{\rho,
  k},$ then the following relation is clear:
  \[
    \norm{\Phi_{\rho,k}D^\beta u}_{0}=\norm{\Phi_{\rho,k}
    D^{\tilde\beta}\Phi_{\frac{(k-1)\rho}{k}, k-n-1}D^{\beta-\tilde\beta} u}_{0}
    \leq \norm{
    \Phi_{\frac{(k-1)\rho}{k}, k-n-1}D^{\beta-\tilde\beta} u}_{n+1}.
  \]
  Observe $\abs{\beta-\tilde\beta}=k-n-1,$ then we use the above relation and the
  assumption \reff{assum} to compute, for $L\geq
  4^{n+3}\inner{\norm{u}_{H^{n+6}(W)}+1},$
  \begin{align*}
    (k/\rho)^{n+3}\norm{\Phi_{\rho,k}D^\beta u}_{0}&\leq (k/\rho)^{n+3}\norm{
    \Phi_{\frac{(k-1)\rho}{k}, k-n-1}D^{\beta-\tilde\beta}
    u}_{n+1}\\
    &\leq (k/\rho)^{n+3}\frac{L^{k-n-2}}{\rho^{(s+n)(k-n-4)}}\big((k-n-4)!\big)^{s}
    \\
    &\leq \frac{5(n/L)^{n}L^{k-2}}{\rho^{(s+n)(k-3)}}\big((k-3)!\big)^{s}
    \\
    &\leq {\frac 1 2}\frac{L^{k-2}}{\rho^{(s+n)(k-3)}}\big((k-3)!\big)^{s}
    .
  \end{align*}
  In the same way, we can get the estimate on the term  $(k/\rho)^{n+3}
    \norm{\Phi_{\rho,k}\widetilde \Lambda^\sigma D^\beta
    u}_{0},$ that is,
    \begin{align*}
    (k/\rho)^{n+3}
    \norm{\Phi_{\rho,k}\widetilde \Lambda^\sigma D^\beta
    u}_{0}\leq {\frac 1 2}\frac{L^{k-2}}{\rho^{(s+n)(k-3)}}\big((k-3)!\big)^{s}
    .
  \end{align*}
  Thus by the above two inequalities, we get the desired inequality
  \reff{desired}. This completes the proof.
\end{proof}

\begin{lem}\label{r0}
Assume that $(E)_{r, N-1}$ is true for any $r\in[0, 1]$.  Then there
exists a constant $C_1,$ depending only on the Gevrey index $s$ and
the dimension $n$, such that, if $ L\geq
4^{n+3}\inner{\norm{u}_{H^{n+6}(W)}+1},$
\begin{equation}\label{rho0}
\|\Phi_{\rho,N}D^\alpha
u\|_{n+1}+\|\Phi_{\rho,N}{\widetilde\Lambda}^{\sigma} D^\alpha
u\|_{-{\delta\over2}+n+1}\leq \frac{C_1
L^{|\alpha|-2}}{\rho^{(s+n)(|\alpha|-3)}}\big((|\alpha|-3)!\big)^{s}
\end{equation}
for any $\alpha\in \bb{N}^{2n+1},\:\:|\alpha|= N, $ and any
$0<\rho<1$.
\end{lem}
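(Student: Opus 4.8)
The plan is to prove $(E)_{0,N}$ from the subelliptic estimate in Proposition \ref{prp2} applied to the function $\Phi_{\rho,N}D^\alpha u$, together with the induction hypothesis $(E)_{r,N-1}$ for all $r\in[0,1]$. First I would apply \reff{sub0} with $r=\delta/2$ and $f=\Phi_{\rho,N}D^\alpha u$, which gives
\[
\|\Phi_{\rho,N}D^\alpha u\|_{\delta/2+n+1}\leq C\set{\|\mc P\bigl(\Phi_{\rho,N}D^\alpha u\bigr)\|_{\delta/2+n+1-\delta}+\|\Phi_{\rho,N}D^\alpha u\|_0},
\]
together with the analogue of \reff{T} to control the $\widetilde\Lambda^\sigma$ term by $\|\mc P(\Phi_{\rho,N}D^\alpha u)\|$ plus lower order. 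The last term $\|\Phi_{\rho,N}D^\alpha u\|_0$ is handled by the technical Lemma \ref{tech} (this is exactly why that lemma is stated with the gain factor $(k/\rho)^{n+3}$, which absorbs the loss coming from the cutoff estimates \reff{cutoffnorm}), so the heart of the matter is estimating the commutator $\mc P\Phi_{\rho,N}D^\alpha-\Phi_{\rho,N}D^\alpha\mc P$ applied to $u$.

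Next I would expand this commutator. Since $\mc P=X_0+a\widetilde\Lambda_v^{2\sigma}$ and $X_0=\partial_t+v\cdot\partial_x$ is a differential operator in $(t,x)$ with the only $v$-dependence being the linear multiplier $v$, the commutator $[X_0,\Phi_{\rho,N}D^\alpha]$ produces terms where one derivative falls on $\Phi_{\rho,N}$ (yielding a factor $N/\rho$ and lowering the order by one) or where the coefficient $v_j$ gets differentiated; these are estimated by the induction hypothesis applied to $D^\beta u$ with $|\beta|=N-1$ using \reff{cutoffnorm}. The term $[a\widetilde\Lambda_v^{2\sigma},\Phi_{\rho,N}D^\alpha]u$ splits, as in the proof of Proposition \ref{prp2}, into
\[
2a[\widetilde\Lambda_v^\sigma,\Phi_{\rho,N}D^\alpha]\widetilde\Lambda_v^\sigma u
+a[\widetilde\Lambda_v^\sigma,[\widetilde\Lambda_v^\sigma,\Phi_{\rho,N}D^\alpha]]u
+[a,\Phi_{\rho,N}D^\alpha]\widetilde\Lambda_v^{2\sigma}u,
\]
and then $[\widetilde\Lambda_v^\sigma,\Phi_{\rho,N}D^\alpha]=[\widetilde\Lambda_v^\sigma,\Phi_{\rho,N}]D^\alpha$ (since $\widetilde\Lambda_v^\sigma$ commutes with $D^\alpha$), so Lemma \ref{+lem9} converts the first two groups into $(N/\rho)^\sigma$ and $(N/\rho)^{2\sigma}$ times norms of $\Phi_{\tilde\rho,N}D^\alpha u$ and $\Phi_{\tilde\rho,N}\widetilde\Lambda_v^\sigma D^\alpha u$ plus lower-order pieces controlled by $(E)_{r,N-1}$.

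The main obstacle — and the reason this is a genuine induction — is the last term $[a,\Phi_{\rho,N}D^\alpha]\widetilde\Lambda_v^{2\sigma}u$: here all $|\alpha|=N$ derivatives can land on the coefficient $a$, so by Leibniz one gets $\sum_{0\neq\gamma\leq\alpha}\binom{\alpha}{\gamma}(D^\gamma a)\Phi_{\rho,N}D^{\alpha-\gamma}\widetilde\Lambda_v^{2\sigma}u$, and one must reconstitute the Gevrey bound $\bigl((N-3)!\bigr)^s (N/\rho)^{sr}$ from the Gevrey bound $B_a^{|\gamma|+1}(|\gamma|!)^s$ on $a$ times the inductive bound on $\Phi D^{\alpha-\gamma}\widetilde\Lambda_v^\sigma u$-type quantities. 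This requires the standard combinatorial estimate $\sum_{j}\binom{N}{j}(j!)^s((N-j-3)!)^s\le C((N-3)!)^s$ valid for $s\ge 3$ (this is exactly where the hypothesis $s\ge 2/\delta\ge 3$ and the shift by $3$ in the factorials are used), plus a careful bookkeeping of the powers of $N/\rho$ and of $L$: one needs the total power of $N/\rho$ produced by the cutoff commutators to not exceed $sr$ after using \reff{fre}, and the total power of $L$ to come out to $|\alpha|-2$ so that absorbing the constant $C_1$ into $L$ closes the induction. I would carry out the $r=0$ case first (Lemma \ref{r0}), where no extra $N/\rho$ budget is available and the estimate is tightest, then bootstrap to $0\le r\le\delta/2$ and finally to $0\le r\le 1$ by iterating the subelliptic gain $\delta$ a bounded number of times and using interpolation \reff{interpolation} to trade Sobolev regularity for powers of $N/\rho$.
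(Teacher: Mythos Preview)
Your plan is the strategy for the \emph{next} lemma (Lemma~\ref{r1/3}), not for this one, and if you try to run it here it becomes circular. The point where it breaks is the term $a[\widetilde\Lambda_v^{2\sigma},\Phi_{\rho,N}]D^\alpha u$: since $\widetilde\Lambda_v^\sigma$ commutes with $D^\alpha$, Lemma~\ref{+lem9} gives you back $(N/\rho)^\sigma\|\Phi_{\tilde\rho,N}\widetilde\Lambda^\sigma D^\alpha u\|_{-\delta/2+n+1}$, which is precisely one of the two quantities in \eqref{rho0} that you are trying to bound (with $\tilde\rho$ in place of $\rho$, which does not help). In the paper this term \emph{is} controlled in Step~1 of Lemma~\ref{r1/3}, but only by invoking Lemma~\ref{r0} itself; you cannot use the subelliptic--commutator machinery to prove Lemma~\ref{r0} in the first place. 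There is a second, simpler obstruction: every commutator with the cutoff produces at least a factor $(N/\rho)^\sigma$, but the target bound \eqref{rho0} carries \emph{no} factor $(N/\rho)^{sr}$ since $r=0$, so your output would be too large by $(N/\rho)^\sigma$.

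The paper's proof of Lemma~\ref{r0} is far more elementary and uses neither the subelliptic estimate nor any commutator with $\widetilde\Lambda^\sigma$ or with $a$. One simply writes $D^\alpha=D\,D^\beta$ with $|\beta|=N-1$, so that
\[
\|\Phi_{\rho,N}D^\alpha u\|_{n+1}\leq \|\Phi_{\rho,N}D^\beta u\|_{1+n+1}+\|(D\Phi_{\rho,N})D^\beta u\|_{n+1},
\]
inserts $\Phi_{\tilde\rho,N-1}$ (equal to $1$ on $\operatorname{supp}\Phi_{\rho,N}$), and applies Lemma~\ref{3.1}. The main term $\|\Phi_{\tilde\rho,N-1}D^\beta u\|_{1+n+1}$ is exactly the left side of $(E)_{1,N-1}$; the resulting factor $(N/\tilde\rho)^{s}$ is absorbed by the jump from $\bigl((N-4)!\bigr)^s\rho^{-(s+n)(N-4)}$ to $\bigl((N-3)!\bigr)^s\rho^{-(s+n)(N-3)}$. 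The lower-order remainder $(N/\rho)^{n+2}\|\Phi_{\tilde\rho,N-1}D^\beta u\|_0$ is handled by Lemma~\ref{tech} with $k=N-1$. The same computation works verbatim for the $\widetilde\Lambda^\sigma$ term. Everything you wrote about the commutator expansion, the Leibniz sum over $D^\gamma a$, and the combinatorial estimate is correct and needed, but it belongs to the proof of Lemma~\ref{r1/3}, which takes Lemma~\ref{r0} as input.
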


\begin{rmk} In fact, this is $(E)_{r, N}$ for $r=0$
if we choose $L$ such that  $ L\geq C_1$ and $L\geq
4^{n+3}\inner{\norm{u}_{H^{n+6}(W)}+1}$.
\end{rmk}

\begin{proof}

We choose a multi-index $\beta$ with $\abs\alpha=\abs\beta+1$. Then
$|\beta|=N-1$. Recall $\tilde\rho={{N-1}\over N}\rho.$  By the
construction, $\Phi_{\tilde\rho, N-1}=1$ in Supp\,$\Phi_{\rho,N}.$
Thus
\begin{align*}
&\|\Phi_{\rho,N}D^\alpha u\|_{n+1} \leq\|\Phi_{\rho, N}D^\beta
u\|_{1+n+1}+\|(D\Phi_{\rho, N})D^\beta u\|_{n+1}\\
&\leq\|\Phi_{\rho, N}\Phi_{\tilde\rho, N-1}D^\beta
u\|_{1+n+1}+\|(D\Phi_{\rho, N})\Phi_{\tilde\rho, N-1}D^\beta
u\|_{n+1}\\&\leq C_n\set{\|\Phi_{\tilde\rho, N-1}D^\beta
u\|_{1+n+1}+(N/\rho)\|\Phi_{\tilde\rho, N-1}D^\beta
u\|_{n+1}+(N/\rho)^{n+2}\|\Phi_{\tilde\rho, N-1}D^\beta u\|_{0}},
\end{align*}
In the last inequality, we have used  Lemma \ref{3.1}. For the third
term on the right-hand side, we use Lemma \ref{tech} with $k=N-1$ to
obtain
\begin{align*}
  (N/\rho)^{n+2}\|\Phi_{\tilde\rho, N-1}D^\beta u\|_{0}&=
  {{N-1}\over{\tilde\rho}}\set{\inner{\frac{N-1}{\tilde\rho}}^{n+2}
  \|\Phi_{\tilde\rho, N-1}D^\beta u\|_{0}}\\
  &\leq
  {{N-1}\over{\tilde\rho}}\set{\inner{\frac{N-1}{\tilde\rho}}^{n+3}
  \|\Phi_{\tilde\rho, N-1}D^\beta u\|_{0}}\\
  &\leq {{N-1}\over{\tilde\rho}}
  \frac{L^{N-3}}{\tilde\rho^{(s+n)(N-4)}}\big((N-4)!\big)^{s}\\
  &\leq
  \frac{2L^{|\alpha|-2}}{\tilde\rho^{(s+n)(|\alpha|-3)}}\big((|\alpha|-3)!\big)^{s}.
\end{align*}
Applying the relation \reff{fre}, we get
\begin{align*}
  (N/\rho)^{n+2}\|\Phi_{\tilde\rho, N-1}D^\beta u\|_{0}\leq
  \frac{20^{s+n}L^{|\alpha|-2}}{\rho^{(s+n)(|\alpha|-3)}}\big((|\alpha|-3)!\big)^{s}.
\end{align*}

On the other hand, by the induction assumption that $(E)_{r,N-1}$
holds for any $r$ with $0\leq r\leq1$, we have immediately
\begin{align*}
  &\|\Phi_{\tilde\rho, N-1}D^\beta
u\|_{1+n+1}+(N/\rho)\|\Phi_{\tilde\rho, N-1}D^\beta
u\|_{n+1}\\
  &\leq \frac{L^{|\beta|-1}}{\tilde\rho^{(s+n)(|\beta|-3)}}
     \big((|\beta|-3)!\big)^{s}(N/\tilde\rho)^{s}
 +(N/\rho)\frac{L^{|\beta|-1}}{\tilde\rho^{(s+n)(|\beta|-3)}}\big((|\beta|-3)!\big)^{s}\\
  &\leq \frac{2L^{|\alpha|-2}}{{\tilde\rho}^{(s+n)(|\alpha|-3)}}
     \big((|\alpha|-3)!\big)^{s}\big(N/(N-3)\big)^{s}\\
  &\leq\frac{30^{s+n}L^{|\alpha|-2}}{\rho^{(s+n)(|\alpha|-3)}}\big((|\alpha|-3)!\big)^{s}.
\end{align*}
Thus
\begin{equation*}
\|\Phi_{\rho,N}D^\alpha u\|_{n+1}\leq \frac{30^{s+n} C_n
L^{|\alpha|-2}}{\rho^{(s+n)(|\alpha|-3)}}\big((|\alpha|-3)!\big)^{s}.
\end{equation*}
By  exactly the same calculation, we obtain
\begin{eqnarray*}
\|\Phi_{\rho,N}{\widetilde\Lambda}^{\sigma} D^\alpha
u\|_{-{\delta\over2}+n+1} \leq \frac{30^{s+n}
C_nL^{|\alpha|-2}}{\rho^{(s+n)(|\alpha|-3)}}\big((|\alpha|-3)!\big)^{s}.
\end{eqnarray*}
Taking $C_1=60^{s+n} C_n$ with $C_n$ being the constant appearing in
Lemma \ref{3.1},  we obtain \reff{rho0}. This completes the proof of
Lemma \ref{r0}.

\end{proof}

\begin{lem}\label{r1/3}
Assume that $(E)_{r, N-1}$ is true for any $r\in[0, 1]$. Then there
exists a constant $C_2,$ depending only on $\sigma, $ the Gevrey
index $s,$ the dimension $n$ and $\norm{u}_{H^{n+6}(W)},
\norm{a}_{C^{n+2}(\bar \Omega)},$ such that for any $0\leq r\leq
\frac{\delta}{2}$, if
$$
L\geq \max\Big\{2^{s+1}B_a,\, B_f,
\,4^{n+3}\inner{\norm{u}_{H^{n+6}(W)}+1}\Big\}
$$
with $B_a, B_f$ being the Gevrey constants of $a, f\in
G^s(\bar\Omega)$, we have that
\begin{equation}\label{r}
\|\Phi_{\rho,N}D^\alpha
u\|_{r+n+1}+\|\Phi_{\rho,N}{\widetilde\Lambda}^{\sigma} D^\alpha
u\|_{r-\frac{\delta}{2}+n+1}\leq
\frac{C_{2}L^{|\alpha|-2}}{\rho^{(s+n)(|\alpha|-3)}}\big((|\alpha|-3)!\big)^{s}(N/\rho)^{rs},
\end{equation}
for any $\alpha\in \bb{N}^{2n+1},\:\:|\alpha|= N$.
\end{lem}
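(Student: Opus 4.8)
\emph{Sketch of proof.} The plan is to produce the extra $r$ derivatives over Lemma \ref{r0} by one application of the subelliptic estimate \reff{sub0}, controlling the commutator of $\mc P$ with $\Phi_{\rho,N}D^\alpha$ by the induction hypothesis $(E)_{\cdot,N-1}$, by Lemma \ref{tech}, and by the Gevrey bounds on $a$ and $f$. Fix $\alpha$ with $\abs\alpha=N$ and $r\in[0,\frac{\delta}{2}]$, and put $g=\Phi_{\rho,N}D^\alpha u\in C_0^\infty(\Omega)$ and $r_1=r-\frac{\delta}{2}+n+1$; note $n+1-\frac{\delta}{2}\le r_1\le n+1$, so $r_1$ stays in a bounded interval. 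Since $r+n+1-\delta<r_1\le r+n+1$, applying \reff{sub0} at order $r+n+1-\delta$ and \reff{T} at order $r_1$ gives, with $C_W$ depending only on $W$,
\[
  \norm{g}_{r+n+1}\le C_W\set{\norm{\mc Pg}_{r_1}+\norm{g}_0},\qquad
  \bignorm{\widetilde\Lambda^{\sigma}_v g}_{r_1}\le C_W\set{\norm{\mc Pg}_{r_1}+\norm{g}_{r+n+1}}.
\]
Since $\widetilde\Lambda^{\sigma}_v$ commutes with $D^\alpha$ and with the $(t,x)$-factor of $\Phi_{\rho,N}$, one has $\Phi_{\rho,N}\widetilde\Lambda^{\sigma}_v D^\alpha u=\widetilde\Lambda^{\sigma}_v g-\psi(t,x)\bigcom{\widetilde\Lambda^{\sigma}_v,\varphi_{\rho,N}}D^\alpha u$, and (after inserting a slightly wider cut-off) Lemma \ref{+lem9} bounds the last term in $H^{r_1}$ by $\inner{N/\rho}^{\sigma}$ and $\inner{N/\rho}^{r_1+\sigma}$ times cut-off norms of $D^\alpha u$, already covered by $(E)_{0,N}$ (Lemma \ref{r0}) and Lemma \ref{tech}. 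Thus the left side of $(E)_{r,N}$ is bounded by $C_W\norm{\mc Pg}_{r_1}$ plus terms of the desired form, and since $\norm g_0$ is of that form by Lemma \ref{tech} with $k=N$, everything reduces to estimating $\norm{\mc Pg}_{r_1}$.

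Next I would write out $\bigcom{\mc P,\Phi_{\rho,N}D^\alpha}u$, using that $\widetilde\Lambda^{2\sigma}_v$ is a Fourier multiplier in $v$ (hence commutes with $D_t,D_x,D_v$) and that $a,\Phi_{\rho,N}$ are multiplications: $\mc Pg$ becomes the sum of $\Phi_{\rho,N}D^\alpha f$, the top-order commutator $\Phi_{\rho,N}\bigcom{X_0,D^\alpha}u$, the term $\Phi_{\rho,N}\Biginner{\bigcom{a,D^\alpha}\widetilde\Lambda^{2\sigma}_vu}$, and the two cut-off commutators $\biginner{X_0\Phi_{\rho,N}}D^\alpha u$ and $a\psi(t,x)\bigcom{\widetilde\Lambda^{2\sigma}_v,\varphi_{\rho,N}}D^\alpha u$. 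I would estimate these in $H^{r_1}$ one by one: the first from $f\in G^s(\bar\Omega)$ via \reff{cutoffnorm}, the $L^2$ form of \reff{gevrey}, the combinatorial bound $\binom{\abs\alpha}{\abs\beta}(\abs\beta!)^s(\abs{\alpha-\beta}!)^s\le(\abs\alpha!)^s$, and $L\ge B_f$; the two cut-off commutators by peeling off one derivative $D^\alpha=D_zD^\beta$ ($\abs\beta=N-1$), moving it and the cut-off derivative past $\Phi_{\tilde\rho,N-1}$ with $\tilde\rho=\frac{N-1}{N}\rho$, and invoking $(E)_{\cdot,N-1}$ and Lemma \ref{tech} — the surplus factors $N/\rho$ coming from \reff{cutoff}, Lemma \ref{3.1}, \reff{+com1} being absorbed by the relation \reff{fre} because $s\ge\frac2\delta$; and the Leibniz sum $\sum_{0\ne\beta\le\alpha}\binom{\alpha}{\beta}(D^\beta a)\widetilde\Lambda^{2\sigma}_vD^{\alpha-\beta}u$ split by the size of $\abs\beta$ (for $\abs\beta$ near $N$, $\widetilde\Lambda^{2\sigma}_vD^{\alpha-\beta}u$ is of bounded order and handled by $u\in C^\infty$, while $D^\beta a$ uses $a\in G^s$ with $L\ge2^{s+1}B_a$; for small $\abs\beta$, $D^\beta a$ is bounded by $\norm a_{C^{n+2}(\bar\Omega)}$ and $\widetilde\Lambda^{2\sigma}_vD^{\alpha-\beta}u$ by the two occurrences of the operator in $(E)_{\cdot,N-\abs\beta}$, with Lemma \ref{+lem9} used to move one $\widetilde\Lambda^{\sigma}_v$ past the cut-off), again with the same combinatorial inequality.

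The hard part will be the top-order commutator. A direct computation gives $\bigcom{X_0,D^\alpha}u=-\sum_{j=1}^n\alpha_{v_j}\,\partial_{x_j}D^{\alpha-e_{v_j}}u$, where $e_{v_j}$ is the unit multi-index in the $v_j$ slot: each summand carries an extra $x$-derivative but only $N-1$ remaining derivatives of $u$, and a coefficient $\alpha_{v_j}\le N$. After moving $\partial_{x_j}$ out of $\Phi_{\rho,N}$ (the byproduct $(\partial_{x_j}\Phi_{\rho,N})D^{\alpha-e_{v_j}}u$ being treated like the cut-off commutators above) and inserting $\Phi_{\tilde\rho,N-1}$, the quantity $\norm{\Phi_{\rho,N}\bigcom{X_0,D^\alpha}u}_{r_1}$ is, up to Lemma \ref{3.1} errors, at most $N\max_j\bignorm{\Phi_{\tilde\rho,N-1}D^{\alpha-e_{v_j}}u}_{r_1+1}$, a quantity of the shape occurring in $(E)_{r',N-1}$ with $r'=r_1+1-(n+1)=r+1-\frac\delta2$. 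The restriction $r\le\frac\delta2$ is precisely what makes $r'\le1$, so $(E)_{r',N-1}$ applies and produces the bound $\tilde\rho^{-(s+n)(N-4)}L^{N-2}\bigl((N-4)!\bigr)^s(N/\tilde\rho)^{r's}$; multiplying by $N$ and using \reff{fre} to pass from $\tilde\rho$ to $\rho$, the $\rho$-powers combine into a nonnegative power of $\rho\le1$ and the $N$-powers reduce to the boundedness of $N^{1+(1-\delta/2)s}/(N-3)^s$ over $N\ge4$, which holds iff $s\ge\frac2\delta$. Collecting the five estimates and letting $C_2$ absorb the finitely many constants yields \reff{r}.

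In short, the obstacle is that $[X_0,D^\alpha]u$ costs $N$ full derivatives of $u$ with a coefficient of size $N$; the only remedy is to trade the surplus $x$-derivative for a drop from order $N$ to order $N-1$ at the cost of raising $r$ by $1-\frac\delta2$, which is admissible only when $r\le\frac\delta2$ (hence the range of $r$ in the lemma), and the factorial and $(N/\rho)$-bookkeeping then closes precisely under the hypothesis $s\ge\frac2\delta$. Everything else — the cut-off commutators, the $D^\alpha f$ and $D^\beta a$ terms, the Leibniz sum — is routine though laborious, the routine inputs being Lemmas \ref{3.1}, \ref{tech}, \ref{+lem9}, the relations \reff{cutoff} and \reff{fre}, and the combinatorial inequality above.
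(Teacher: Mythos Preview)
Your sketch is correct and follows the paper's strategy: apply the subelliptic estimate \reff{sub0} to $g=\Phi_{\rho,N}D^\alpha u$, then control $\norm{\mc Pg}_{r_1}$ by decomposing the commutator $[\mc P,\Phi_{\rho,N}D^\alpha]$ into the pieces you list and invoking $(E)_{\cdot,N-1}$, Lemma~\ref{r0}, Lemma~\ref{tech}, Lemma~\ref{+lem9}, and the Gevrey bounds on $a,f$. The paper organizes the argument into four Steps (commutator with $\widetilde\Lambda^{2\sigma}$; full commutator $[\mc P,\Phi_{\rho,N}D^\alpha]$; adding $\Phi_{\rho,N}D^\alpha f$; subelliptic closure), but the content matches yours.

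The one genuine difference is your handling of $\Phi_{\rho,N}[X_0,D^\alpha]u$. You peel off the extra $\partial_{x_j}$ into the Sobolev index and call $(E)_{r',N-1}$ with $r'=r+1-\frac\delta2$, so that the constraint $r'\le1$ forces $r\le\frac\delta2$; you then identify this as ``the obstacle'' and ``the only remedy.'' The paper does something simpler here: since each term $\partial_{x_j}D^{\alpha-e_{v_j}}$ is itself a $D^{\alpha_0}$ with $\abs{\alpha_0}=N$ (and the coefficients $\alpha_{v_j}$ sum to at most $N$, absorbed by $(N/\rho)^{s\delta/2}\ge N/\rho$), one can apply the already-proved $(E)_{0,N}$ (Lemma~\ref{r0}) directly to $\norm{\Phi_{\rho,N}D^{\alpha_0}u}_{n+1}$. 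So in the paper this term is the \emph{easy} one; the more delicate step is the $a[\widetilde\Lambda^{2\sigma},\varphi_{\rho,N}]D^\alpha u$ commutator (the paper's Step~1), which after Lemma~\ref{+lem9} produces a term $(N/\rho)^{2\sigma}\norm{\Phi_{\tilde\rho,N}D^\alpha u}_{-\frac\delta2+n+1}$ that \emph{does} require peeling off a derivative and using $(E)_{1-\frac\delta2,\,N-1}$. Your route is valid, but the restriction $r\le\frac\delta2$ in the paper's scheme really comes from the bootstrap structure (gain $\delta$ from \reff{sub0} starting at level $-\frac\delta2+n+1$, where Lemma~\ref{r0} is available), not specifically from $[X_0,D^\alpha]$.
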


\begin{rmk}
  The assumption that $L\geq 2^{s+1}B_a$ will be needed in Step 2 of the following proof of this
  lemma, while that $L\geq B_f$ will be required in Step 3.  That $L\geq
  4^{n+3}\inner{\norm{u}_{H^{n+6}(W)}+1}$ is required
  because in the sequel we will use frequently the conclusion of Lemma
  \ref{tech} where such a assumption is presented.
\end{rmk}
\begin{proof}

In this proof, we shall use $\widetilde C_j, j\geq0,$ to denote
different constants which are greater than 1 and depend  only on $s,
\sigma, n, \norm{u}_{H^{n+6}(W)}$ and $\norm{a}_{C^{2n+2}(\Omega)}.$
The conclusion  will follow  if we prove that
\begin{equation}\label{r-theta}
  \|\Phi_{\rho, N}D^\alpha u\|_{\frac{\delta}{2}+n+1}+\|\Phi_{\rho,
 N}{\widetilde\Lambda}^{\sigma} D^\alpha u\|_{n+1}
  \leq\frac{\widetilde
 C_{0}L^{|\alpha|-2}}{\rho^{(s+n)(|\alpha|-3)}}\big((|\alpha|-3)!\big)^{s}
     (N/\rho)^{\frac{s\delta}{2}}.
\end{equation}
Indeed, from \reff{r-theta} we know that \reff{r} is true for
$r={\delta\over2}.$    The truth of \reff{r} for the general $r,
0\leq r\leq \frac{\delta}{2},$ follows  from the interpolation
inequality (\ref{interpolation}) and Lemma \ref{r0}.

To prove \reff{r-theta}, we shall proceed in the following four
steps.

\bigskip {\bf Step 1.} In this step we prove
\begin{equation}
\label{step+1}
 \|a[{\widetilde\Lambda}^{2\sigma},~~\Phi_{\rho,N}D^\alpha]u\|_{-\frac{\delta}{2}+ n+1}
    \leq\frac{\widetilde C_1 L^{|\alpha|-2}}{\rho^{(s+n)(|\alpha|-3)}}
    \big((|\alpha|-3)!\big)^{s}(N/\rho)^{\frac{s\delta}{2}}.
\end{equation}

Recall $\Phi_{\rho,N}(t,x,v)=\psi_{\rho, N}(t,x)\vpi_{\rho, N}(v)$
with $\psi_{\rho, N},\vpi_{\rho, N}$ being the cut-off functions
constructed in Section 3.  First, notice that $\psi_{\tilde\rho,
N}=1$ in the support of \:$\psi_{\rho,N},$ and $\vpi_{\tilde\rho,
N}=1$ in the support of \:$\vpi_{\rho,N}$. It then follows that
 \begin{align*}
 &\|a[{\widetilde\Lambda}^{2\sigma},~~\Phi_{\rho,N}D^\alpha]
 u\|_{-\frac{\delta}{2}+
 n+1}=\|a[{\widetilde\Lambda}^{2\sigma},~~\varphi_{\rho,N}]\psi_{\rho,N}D^\alpha
 u\|_{-\frac{\delta}{2}+
 n+1}\\
 &\leq C_a\set{
 \|[{\widetilde\Lambda}^{\sigma},~~\varphi_{\rho,N}]\psi_{\rho,N}
 {\widetilde\Lambda}^{\sigma}D^\alpha u\|_{-\frac{\delta}{2}+
 n+1}+\|[{\widetilde\Lambda}^{\sigma},\:
 [{\widetilde\Lambda}^{\sigma},~~\varphi_{\rho,N}]\:]\psi_{\rho,N}D^\alpha u\|_{-\frac{\delta}{2}+
 n+1}}\\
 &\leq C_a\Big\{
 \|[{\widetilde\Lambda}^{\sigma},~~\varphi_{\rho,N}]\psi_{\rho,N}\,\psi_{\tilde\rho,N}\,
 \vpi_{\tilde\rho,N}
 {\widetilde\Lambda}^{\sigma}D^\alpha u\|_{-\frac{\delta}{2}+
 n+1}\\
 &\indent+\|[{\widetilde\Lambda}^{\sigma},\:
 [{\widetilde\Lambda}^{\sigma},~~\varphi_{\rho,N}]\:]
 \psi_{\rho,N}\,\psi_{\tilde\rho,N}\,\vpi_{\tilde\rho,N}D^\alpha u\|_{-\frac{\delta}{2}+
 n+1}\Big\}\\
 &= C_a\Big\{
 \|[{\widetilde\Lambda}^{\sigma},~~\varphi_{\rho,N}]\psi_{\rho,N}\Phi_{\tilde\rho,N}
 {\widetilde\Lambda}^{\sigma}D^\alpha u\|_{-\frac{\delta}{2}+
 n+1}\\
 & \,\,\,\,\,\,\,\, +\|[{\widetilde\Lambda}^{\sigma},\:
 [{\widetilde\Lambda}^{\sigma},~~\varphi_{\rho,N}]\:]
 \psi_{\rho,N}\Phi_{\tilde\rho,N}D^\alpha u\|_{-\frac{\delta}{2}+
 n+1}\Big\}\\
 &=:(S_1)+(S_2),
 \end{align*}
where $C_a$  is a constants depending only on the coefficient $a$
through $\norm a_{C^{n+2}(\bar\Omega)}.$ To estimate the term
$(S_1)$, we apply the inequality \reff{+com1} in Lemma \ref{+lem9}
and then \reff{cutoffnorm} in Lemma \ref{3.1}. This gives
 \begin{align*}
 (S_1)&\leq
 C_aC_{\sigma,n}\Big\{\inner{N/\rho}^\sigma\norm{\psi_{\rho,N}\Phi_{\tilde\rho,
 N}{\widetilde\Lambda}^{\sigma} D^\alpha u}_{-\frac{\delta}{2}+
 n+1}\\
 &\hskip 1cm +\inner{N/\rho}^{n+1-\frac{\delta}{2}+\sigma}
 \norm{\psi_{\rho,N}\Phi_{\tilde\rho, N}{\widetilde\Lambda}^{\sigma}
 D^\alpha
 u}_{0}\Big\}\\
 &\leq
 C_aC_{\sigma,n}\set{\inner{N/\rho}^\sigma\norm{\Phi_{\tilde\rho,
 N}{\widetilde\Lambda}^{\sigma} D^\alpha u}_{-\frac{\delta}{2}+
 n+1}+\inner{N/\rho}^{n+1-\frac{\delta}{2}+\sigma}\norm{\Phi_{\tilde\rho, N}
 {\widetilde\Lambda}^{\sigma}
 D^\alpha
 u}_{0}}\\
 &=:(S_1)^{'}+(S_1)^{''}.
 \end{align*}
First, the estimate \reff{rho0} in Lemma \ref{r0} yields
\begin{align*}
(S_1)'&\leq C_aC_{\sigma,n}C_1\inner{{N\over\rho}}^\sigma \frac{
L^{|\alpha|-2}}{{\tilde\rho}^{(s+n)(|\alpha|-3)}}\big((|\alpha|-3)!\big)^{s}\\
&\leq \frac{\widetilde C_2
L^{|\alpha|-2}}{\rho^{(s+n)(|\alpha|-3)}}\big((|\alpha|-3)!\big)^{s}
\inner{{N\over\rho}}^{\frac{s\delta}{2}}.
\end{align*}
In the last inequality, we used the fact
$\frac{s\delta}{2}\geq1>\sigma.$  Next, we treat $(S_1)^{''}.$ By
virtue of the induction assumption,  the required condition
\reff{assum} in Lemma \ref{tech} is satisfied with $k=N$. It thus
follows from \reff{desired} that
 \begin{align*}
 (S_1)^{''}&\leq C_aC_{\sigma,n}\inner{\frac{N}{\rho}}^\sigma\frac{
 L^{|\alpha|-2}}{\tilde\rho^{(s+n)(|\alpha|-3)}}\big((|\alpha|-3)!\big)^{s}\\
 &\leq \frac{\widetilde C_2
 L^{|\alpha|-2}}{\rho^{(s+n)(|\alpha|-3)}}\big((|\alpha|-3)!\big)^{s}
 \inner{{N\over\rho}}^{\frac{s\delta}{2}}.
 \end{align*}
Thus
\begin{equation*}
(S_1) \leq \frac{\widetilde C_3
 L^{|\alpha|-2}}{\rho^{(s+n)(|\alpha|-3)}}\big((|\alpha|-3)!\big)^{s}
 \inner{{N\over\rho}}^{\frac{s\delta}{2}}.
\end{equation*}

\vspace{1ex} Now it remain to treat the term $(S_2).$  By the
similar arguments as above, the inequality \reff{+com2} in Lemma
\ref{+lem9} gives
\begin{align*}
 (S_2)\leq  \widetilde
 C_4\inner{N/\rho}^{2\sigma}\norm{\Phi_{\tilde\rho, N}D^\alpha
 u}_{-\frac{\delta}{2}+
 n+1}+\widetilde C_4
 \inner{N/\rho}^{n+1-\frac{\delta}{2}+2\sigma}\norm{\Phi_{\tilde\rho,
 N}D^\alpha u}_{0}:=\mc N_1+\mc N_2.
 \end{align*}
We first estimate $\mc N_1.$ Choose a multi-index $\beta$ with
$\abs\alpha=\abs\beta+1.$ Then the similar arguments as the proof of
Lemma \ref{r0} give
 \begin{align*}
   \norm{\Phi_{\tilde\rho, N}D^\alpha u}_{-\frac{\delta}{2}+n+1}
   &\leq C_n\Big\{\|\Phi_{\tilde{\tilde\rho}, N-1}D^\beta
   u\|_{(1-\frac{\delta}{2})+n+1}\\
   &+(N/\rho)\|\Phi_{\tilde{\tilde\rho}, N-1}D^\beta
   u\|_{-\frac{\delta}{2}+n+1}+(N/\rho)^{n+2-{\delta\over2}}
   \|\Phi_{\tilde{\tilde\rho}, N-1}D^\beta
   u\|_{0}\Big\}.
 \end{align*}
 We recall $\tilde{\tilde\rho}=\frac{(N-2)\rho}{N}.$
 By the interpolation inequality \reff{interpolation},
 \begin{align*}
  (N/\rho)\|\Phi_{\tilde{\tilde\rho}, N-1}D^\beta
   u\|_{-\frac{\delta}{2}+n+1}\leq  \|\Phi_{\tilde{\tilde\rho}, N-1}D^\beta
   u\|_{(1-\frac{\delta}{2})+n+1}+\inner{{N\over\rho}}^{n+2-{\delta\over2}}
   \|\Phi_{\tilde{\tilde\rho}, N-1}D^\beta u\|_{0}\Big\}.
 \end{align*}
Therefore
 \begin{align*}
   \norm{\Phi_{\tilde\rho, N}D^\alpha u}_{-\frac{\delta}{2}+n+1}
   &\leq C_n\Big\{\|\Phi_{\tilde{\tilde\rho}, N-1}D^\beta
   u\|_{(1-\frac{\delta}{2})+n+1}+(N/\rho)^{n+2-{\delta\over2}}
   \|\Phi_{\tilde{\tilde\rho}, N-1}D^\beta u\|_{0}\Big\}
 \end{align*}
Hence $\mc N_1\leq \mc N_{1,1}+\mc N_{1,2}$  with $\mc N_{1,1},\,
\mc N_{1,2}$ given by
\[
  \mc N_{1,1}=\widetilde C_5 \inner{\frac{N}{\rho}}^{2\sigma}
  \|\Phi_{\tilde{\tilde\rho}, N-1}D^\beta
   u\|_{(1-\frac{\delta}{2})+n+1},
   \quad   \mc N_{1,2}=\widetilde C_5
   \inner{\frac{N}{\rho}}^{n+2-{\delta\over2}+2\sigma} \|\Phi_{\tilde{\tilde\rho}, N-1}D^\beta
   u\|_{0}.
\]
Since $(E)_{r,N-1}$ holds for all $r\in[0,1],$ then it follows that
\begin{align*}
  \mc N_{1,1}&\leq \widetilde C_5\nrho^{2\sigma}
  \frac{L^{|\alpha|-2}}{\tilde{\tilde\rho}^{(s+n)(|\alpha|-4)}}\big((|\alpha|-4)!\big)^{s}
  \inner{\frac{N-1}{\tilde{\tilde\rho}}}^{s(1-\frac{\delta}{2})}
  \\
  &\leq \widetilde C_6\nrho^{2\sigma-\frac{s\delta}{2}}
  \frac{L^{|\alpha|-2}}{\rho^{(s+n)(|\alpha|-4)}}\big((|\alpha|-4)!\big)^{s}
  \inner{\frac{N-3}{\rho}}^{s}
  \\
  &\leq \widetilde C_6\nrho^{\frac{s\delta}{2}}
  \frac{L^{|\alpha|-2}}{\rho^{(s+n)(|\alpha|-3)}}\big((|\alpha|-3)!\big)^{s}
  .
\end{align*}
In the last inequality, we used again the fact
$\frac{s\delta}{2}\geq\sigma.$  For the  term $\mc N_{1,2}$, we use
Lemma \ref{tech} with $k=N-1.$ This gives
\begin{align*}
  \mc N_{1,2}&\leq \widetilde C_5
   \inner{\frac{N-2}{\tilde{\tilde\rho}}}^{n+2-{\delta\over2}+2\sigma}
   \inner{\frac{N-1}{\tilde{\tilde\rho}}}^{-(n+3)}\set{
   \inner{\frac{N-1}{\tilde{\tilde\rho}}}^{(n+3)} \|\Phi_{\tilde{\tilde\rho}, N-1}D^\beta
   u\|_{0}}\\
   &\leq \widetilde C_5
   \inner{\frac{N-1}{\tilde{\tilde\rho}}}^{-1-{\delta\over2}+2\sigma}
  \frac{L^{N-3}}{\tilde{\tilde\rho}^{(s+n)(N-4)}}\big((N-4)!\big)^{s}.
\end{align*}
Since $-1-{\delta\over2}+2\sigma<s,$ then it follows from the above
inequality that
\begin{align*}
  \mc N_{1,2}\leq
  \frac{\widetilde C_7L^{|\alpha|-2}}{{\rho}^{(s+n)(|\alpha|-3)}}\big((|\alpha|-3)!\big)^{s}.
\end{align*}
With the estimate on $\mc N_{1,1},$ one has
\begin{align*}
  \mc N_{1}=\mc N_{1,2}+\mc N_{1,2}\leq
  \frac{\widetilde C_8L^{|\alpha|-2}}{\rho^{(s+n)(|\alpha|-3)}}\big((|\alpha|-3)!\big)^{s}
  \nrho^{\frac{s\delta}{2}}
  .
\end{align*}

\vspace{0.5ex}
 In the following,  we treat $\mc N_2=\widetilde C_4
 \inner{N/\rho}^{n+1-\frac{\delta}{2}+2\sigma}\norm{\Phi_{\tilde\rho,
 N}D^\alpha u}_{0}.$ Using Lemma \ref{tech} with $k=N,$ we get
\begin{align*}
   \mc N_{2}&\leq \widetilde C_4
   \inner{\frac{N}{\rho}}^{n+2-{\delta\over2}+2\sigma}
   \inner{\frac{N}{{\tilde\rho}}}^{-(n+3)}\set{
   \inner{\frac{N}{{\tilde\rho}}}^{(n+3)} \|\Phi_{{\tilde\rho},
   N}D^\alpha
   u\|_{0}}\\
   &\leq \widetilde C_4
   \inner{\frac{N}{{\tilde\rho}}}^{\sigma}
  \frac{L^{|\alpha|-2}}{{\tilde\rho}^{(s+n)(|\alpha|-3)}}\big((|\alpha|-3)!\big)^{s}\\
  &\leq
  \frac{\widetilde C_{9}L^{|\alpha|-2}}{\rho^{(s+n)(|\alpha|-3)}}\big((|\alpha|-3)!\big)^{s}
  \inner{\frac{N}{{\rho}}}^{{{s\delta}\over2}}.
\end{align*}
Thus,
\begin{align*}
  (S_2)=\mc N_{1}+\mc N_{2}\leq
  \frac{\widetilde C_{10}
  L^{|\alpha|-2}}{\rho^{(s+n)(|\alpha|-3)}}\big((|\alpha|-3)!\big)^{s}
  \nrho^{\frac{s\delta}{2}}
  .
\end{align*}
With the estimate on $(S_1),$ we get the desired inequality
\reff{step+1}. This completes the proof of Step 1.

\vspace{1ex} {\bf Step 2.} In this step, we prove
\begin{equation}
\label{step1}
   \|[\p,~~ \Phi_{\rho,N}D^\alpha] u\|_{-\frac{\delta}{2}+n+1}
    \leq\frac{\widetilde C_{11}L^{|\alpha|-2}}{\rho^{(s+n)(|\alpha|-3)}}
    \big((|\alpha|-3)!\big)^{s}(N/\rho)^{\frac{s\delta}{2}}.
\end{equation}

\smallskip
Recall $\p=X_0+a{\widetilde\Lambda}^{2\sigma}$ with
$X_0=\partial_t+v\cdot\partial_x$. Then a direct computation deduces
that
\begin{eqnarray*}
\|[\p,~~ \Phi_{\rho,N}D^\alpha] u\|_{-\frac{\delta}{2}+
n+1}&\leq&\|[X_0,~~ \Phi_{\rho,N}D^\alpha]
u\|_{-\frac{\delta}{2}+n+1}+\|a[{\widetilde\Lambda}^{2\sigma},~~
\Phi_{\rho,N}D^\alpha] u\|_{-\frac{\delta}{2}+
n+1}\\&&+\|\Phi_{\rho,N}[a,~~
D^\alpha]{\widetilde\Lambda}^{2\sigma} u\|_{-\frac{\delta}{2}+n+1}\\
&=:&(I)+(II)+(III).
\end{eqnarray*}
We have already handled the second term in Step 1. It remains to
treat the first term $(I)$ and the third term $(III).$

Observe that $[X_0,~~ D^\alpha]$ equals to 0 or $D^{\alpha_0}$ for
some $\alpha_0$ with $|\alpha_0|\leq |\alpha|.$  A direct
verification yields
\begin{align*}
   (I) &\leq\|[X_0,~~ \Phi_{\rho,N}]D^\alpha u\|_{n+1}
             +\|\Phi_{\rho,N} D^{\alpha_0}u\|_{n+1}\\
       &\leq \|(D\Phi_{\rho,N})\Phi_{\tilde\rho,N}D^\alpha u\|_{n+1}
             +\|\Phi_{\rho,N} D^{\alpha_0}u\|_{n+1}\\
       &\leq C_n\Big\{~(N/\rho)\|\Phi_{\tilde\rho,N}D^\alpha
       u\|_{n+1}+
             (N/\rho)^{n+2}\|\Phi_{\tilde\rho,N}D^\alpha u\|_{0}
             +\|\Phi_{\rho,N} D^{\alpha_0}u\|_{n+1}~\Big\}.
\end{align*}
For the first term and the third term on the right-hand  side, using
\reff{rho0} in Lemma \ref{r0}, and noting that
$\frac{s\delta}{2}\geq1$, we obtain
\begin{align*}
   &C_n\Big\{~(N/\rho)\|\Phi_{\tilde\rho,N}D^\alpha u\|_{n+1}+
   \|\Phi_{\rho,N} D^{\alpha_0}u\|_{n+1}~\Big\}\\
   &\leq
   C_n\big(N/\rho+1\big)\frac{C_1L^{|\alpha|-2}}{{\tilde\rho}^{(s+n)
   (|\alpha|-3)}}\big((|\alpha|-3)!\big)^{s}
   \\
   &\leq \frac{\widetilde C_{12}L^{|\alpha|-2}}{\rho^{(s+n)(|\alpha|-3)}}
   \big((|\alpha|-3)!\big)^{s}(N/\rho)^{\frac{s\delta}{2}}.
\end{align*}
On the other hand, we use Lemma \ref{tech} with $k=N$ to get
\begin{align*}
  C_n(N/\rho)^{n+2}\|\Phi_{\tilde\rho,N}D^\alpha u\|_{0} \leq
  \frac{\widetilde C_{13}L^{|\alpha|-2}}{\rho^{(s+n)(|\alpha|-3)}}
   \big((|\alpha|-3)!\big)^{s}(N/\rho)^{\frac{s\delta}{2}}.
\end{align*}
Thus
\begin{equation*}\label{+I}
  (I)\leq \frac{\widetilde C_{14}L^{|\alpha|-2}}{\rho^{(s+n)(|\alpha|-3)}}
   \big((|\alpha|-3)!\big)^{s}(N/\rho)^{\frac{s\delta}{2}}.
\end{equation*}

\vspace{0.5ex} Now it remains to eatimate $(III)$. The Leibniz'
formula yields
\begin{align}\label{III1}
\begin{split}
  (III)&\leq \sum\limits_{0<|\gamma|\leq|\alpha|}
        C_\alpha^\gamma
  \big\|\Phi_{\rho,N}(D^{\gamma}a){\widetilde\Lambda}^{2\sigma}D^{\alpha-\gamma}
 u\big\|_{-\frac{\delta}{2}+n+1}\\
  &\leq C_n\sum\limits_{0<|\gamma|\leq|\alpha|}
       C_\alpha^\gamma
       \norm{D^{\gamma}a}_{C^{n+2}(\bar\Omega)}\cdot
       \|\Phi_{\rho,N}{\widetilde\Lambda}^{2\sigma}D^{\alpha-\gamma}
       u\big\|_{-\frac{\delta}{2}+n+1},
\end{split}
\end{align}
where $C_\alpha^\gamma=\frac{\alpha!}{\gamma!(\alpha-\gamma)!}$ are
the binomial coefficients. Since $a\in G^s(\bar\Omega)$, letting
$B_a$ be the Gevrey constant of Gevrey function $a$ on $\bar\Omega$,
we have
\begin{equation}\label{III2}
     \norm{D^{\gamma}a}_{C^{n+2}(\bar\Omega)}
     \leq B_a^{|\gamma|-1}\big((|\gamma|-2)!\big)^{s}~~
     {\rm if}\:\: |\gamma|\geq2, \quad \norm{D^{\gamma}a}_{C^{n+2}(\bar\Omega)}
     \leq B_a~~
     {\rm if}\:\:|\gamma|=0,~1.
\end{equation}
On the other hand, observe that
\begin{align*}
\|\Phi_{\rho,N}{\widetilde\Lambda}^{2\sigma}D^{\alpha-\gamma}u\|_{-\frac{\delta}{2}+n+1}
  \leq &
 \|[{\widetilde\Lambda}^{\sigma},~\Phi_{\rho,N}]
 {\widetilde\Lambda}^{\sigma}D^{\alpha-\gamma} u\|_{-\frac{\delta}{2}+n+1}
    \\
    &+\|\Phi_{\rho,N}{\widetilde\Lambda}^{\sigma}
 D^{\alpha-\gamma}u\|_{(\sigma-\frac{\delta}{2})+n+1}.
\end{align*}
We have handled in Step 1  the first term on the right hand. This
gives
\begin{align*}
  \|[{\widetilde\Lambda}^{\sigma},~\Phi_{\rho,N}]
  {\widetilde\Lambda}^{\sigma}D^{\alpha-\gamma} u\|_{-\frac{\delta}{2}+n+1}
    \leq  \frac{\widetilde C_{15}L^{|\alpha|-|\gamma|-2}}
  {\rho^{(s+n)(|\alpha|-|\gamma|-3)}}\big((|\alpha|-|\gamma|-3)!\big)^{s}
  (N/\rho)^{\frac{s\delta}{2}}.
\end{align*}
For the second term,  note that  $|\alpha|-|\gamma|\leq N-1$ for
$\gamma\neq0.$ We use the induction hypothesis that
 $(E)_{r,N-1}$ holds for all $r\in[0,1],$ to get, for $\gamma,
 0<\abs\gamma\leq\abs\alpha-3,$ that
\begin{align*}
  \|\Phi_{\rho,N}{\widetilde\Lambda}^{\sigma}
  D^{\alpha-\gamma}u\|_{(\sigma-\frac{\delta}{2})+n+1}
  \leq  \frac{L^{|\alpha|-|\gamma|-1}}
  {\rho^{(s+n)(|\alpha|-|\gamma|-3)}}\big((|\alpha|-|\gamma|-3)!\big)^{s}
  (N/\rho)^{s\inner{\sigma-\frac{\delta}{2}}}.
\end{align*}
Observe that
\begin{align*}
  (N/\rho)^{s\inner{\sigma-\frac{\delta}{2}}}\leq  (N/\rho)^{s}&\leq
  \frac{2^s(N-\abs\gamma-2)^s+2^s(\abs\gamma+2)^s}{\rho^s}\\
  &\leq
  16^s(2^s)^{\abs\gamma-1}(N-\abs\gamma-2)^s\rho^{-s},
\end{align*}
Thus  for $\gamma$ with
 $0<\abs\gamma\leq\abs\alpha-3=N-3,$ we have
\begin{align*}
  \|\Phi_{\rho,N}{\widetilde\Lambda}^{\sigma}
  D^{\alpha-\gamma}u\|_{(\sigma-\frac{\delta}{2})+n+1}
  \leq  \frac{16^s(2^s)^{\abs\gamma-1}L^{|\alpha|-|\gamma|-1}}
  {\rho^{(s+n)(|\alpha|-|\gamma|-2)}}\big((|\alpha|-|\gamma|-2)!\big)^{s}
  .
\end{align*}
Note that the above inequality still holds for $\gamma$ with
$\abs\gamma=\abs\alpha-2$ if we take $L\geq
4^{n+1}\inner{\norm{u}_{H^{n+6}(W)}+1}.$  Consequently, we combine
these inequalities to obtain, for $0<\abs\gamma\leq \abs\alpha-2,$
\begin{align*}
  \|\Phi_{\rho,N}{\widetilde\Lambda}^{2\sigma}D^{\alpha-\gamma}u\|_{-\frac{\delta}{2}+n+1}
  \leq \frac{\widetilde
 C_{16}(2^s)^{\abs\gamma-1}L^{|\alpha|-|\gamma|-1}}{\rho^{(s+n)(|\alpha|-|\gamma|-2)}}
    \big((|\alpha|-|\gamma|-2)!\big)^{s}\inner{N/\rho}^{\frac{s\delta}{2}}.
\end{align*}
This together with  \reff{III2} yields
\begin{align*}
   &\sum\limits_{2\leq|\gamma|\leq|\alpha|-2}
      C_\alpha^\gamma
      \norm{D^{\gamma}a}_{C^{n+2}(\bar\Omega)}\cdot\norm{\Phi_{\rho,N}
      {\widetilde\Lambda}^{2\sigma}D^{\alpha-\gamma} u}_{-\frac{\delta}{2}+n+1}\\
 &\leq\nrho^{\frac{s\delta}{2}}\sum\limits_{2\leq|\gamma|\leq|\alpha|-2}
     \frac{\abs\alpha!}{\abs\beta!(\abs\alpha-\abs\beta)!}
     (2^sB_a)^{|\gamma|-1}\big((|\gamma|-2)!\big)^{s}\\
&\hskip 1cm \times     \frac{\widetilde
 C_{16}L^{|\alpha|-|\gamma|}}{\rho^{(s+n)(|\alpha|-|\gamma|-2)}}
     \big((|\alpha|-|\gamma|-2)!\big)^{s}\\
 &\leq\frac{\widetilde
 C_{16}L^{|\alpha|-2}}{\rho^{(s+n)(|\alpha|-3)}}\inner{N/\rho}^{\frac{s\delta}{2}}
     \sum\limits_{2\leq|\gamma|\leq|\alpha|-2}
     \inner{{{2^sB_a}\over L}}^{|\gamma|-1}|\alpha|!
     \big((|\alpha|-4)!\big)^{s-1}
     \\
  &\leq\frac{\widetilde
 C_{16}L^{|\alpha|-2}}{\rho^{(s+n)(|\alpha|-3)}}\big((|\alpha|-3)!
 \big)^{s}\inner{N/\rho}^{\frac{s\delta}{2}}
     \sum\limits_{2\leq|\gamma|\leq|\alpha|-2}
     \inner{{{2^sB_a}\over L}}^{|\gamma|-1}
     \frac{\abs{\alpha}^{3}}{(|\alpha|-3)^{s-1}}.
\end{align*}
Observe that  $s-1\geq 3$ and  thus the series  in the last
inequality is bounded from above by a constant depending only on $n$
if we take $L>2^{s+1}B_a.$ Then we get
\begin{align*}
 \sum\limits_{2\leq|\gamma|\leq|\alpha|-2}
       C_\alpha^\gamma
       &\norm{D^{\gamma}a}_{C^{n+2}(\bar\Omega)}\cdot\norm{\Phi_{\rho,N}
       {\widetilde\Lambda}^{2\sigma}D^{\alpha-\gamma}
       u}_{-\frac{\delta}{2}+n+1}\\
 & \leq\frac{\widetilde C_{17}L^{|\alpha|-2}}{\rho^{(s+n)(|\alpha|-3)}}
       \big((|\alpha|-3)!\big)^{s}(N/\rho)^{\frac{s\delta}{2}}.
\end{align*}
For $|\gamma|=1, ~|\alpha|-1$ or $|\alpha|$, we can compute directly
    \begin{align*}
     \sum\limits_{|\gamma|=1,|\alpha|-1,\abs\alpha}C_\alpha^\gamma
       &\norm{D^{\gamma}a}_{C^{n+2}(\bar\Omega)}\cdot\norm{\Phi_{\rho,N}
       {\widetilde\Lambda}^{2\sigma}D^{\alpha-\gamma}
       u}_{-\frac{\delta}{2}+n+1}\\
      & \leq\frac{\widetilde C_{18}L^{|\alpha|-2}}{\rho^{(s+n)(|\alpha|-3)}}
       \big((|\alpha|-3)!\big)^{s}(N/\rho)^{\frac{s\delta}{2}}.
\end{align*}
Combination of the above two inequalities and \reff{III1} gives that
  \[ (III)\leq \frac{\widetilde
 C_{19}L^{|\alpha|-2}}{\rho^{(s+n)(|\alpha|-3)}}
     \big((|\alpha|-3)!\big)^{s}(N/\rho)^{\frac{s\delta}{2}}.
  \]
Consequently, the desired inequality \reff{step1} follows. This
completes the proof of Step 2.

\bigbreak {\bf Step 3.} In this  step, we prove that if $\p u=f\in
G^s(\bar\Omega)$ and if $L\geq \tilde B$ with $\tilde B$ the Gevrey
constant of  $f,$
\begin{equation}
\label{step3}
 \|\p\Phi_{\rho,N}D^\alpha u\|_{-\frac{\delta}{2}+n+1}
 \leq\frac{\widetilde C_{20}L^{|\alpha|-2}}{\rho^{(s+n)(|\alpha|-3)}}
      \big((|\alpha|-3)!\big)^{s}(N/\rho)^{\frac{s\delta}{2}}.
\end{equation}

Indeed, observe that
\begin{align*}
\|\p\Phi_{\rho,N}D^\alpha u\|_{-\frac{\delta}{2}+n+1} \leq
\|[\p,~\Phi_{\rho,N}D^\alpha]
u\|_{-\frac{\delta}{2}+n+1}+\|\Phi_{\rho,N}D^\alpha\p
u\|_{-\frac{\delta}{2}+n+1}.
\end{align*}
Since $\p u =f \in G^s(\bar\Omega)$, then $\norm{D^\gamma \p
f}_{H^{n+2}(\Omega)}\leq \tilde
 B$ if $\abs\gamma<n+5,$ and
\[
 \norm{D^\gamma \p f}_{H^{n+2}(\Omega)}\leq \tilde
 B^{\abs\gamma-2}\inner{(\abs\gamma-n-5)!}^s, ~{\rm if ~}
 \abs\gamma\geq n+5.
\]
Hence,
\[
   \|\Phi_{\rho,N}D^\alpha\p u\|_{-\frac{\delta}{2}+n+1}\leq
   C_n (N/\rho)^{n+2}\norm{D^\alpha \p f}_{H^{n+2}(\Omega)}
     \leq\frac{\widetilde C_{21}\tilde B^{|\alpha|-2}}{\rho^{(s+n)(|\alpha|-3)}}
      \big((|\alpha|-3)!\big)^{s}.
\]
We take $L$ such that $L>\tilde B.$ Then the above inequality
together with \reff{step1} in Step 2 yields immediately the
inequality \reff{step3}.

\bigbreak {\bf Step 4.} In the last step we show \reff{r-theta}. And
hence the proof of Lemma \ref{r1/3} will be complete.

\smallskip

First we apply the subelliptic estimate (\ref{sub0}), which is
needed only here, to get
\[
  \|\Phi_{\rho,N}D^\alpha u\|_{\frac{\delta}{2}+n+1}
  \leq C(\Omega)\bigset{\|\p\Phi_{\rho,N}D^\alpha u\|_{-\frac{\delta}{2}+n+1}
    +\|\Phi_{\rho,N}D^\alpha u\|_{n+1}}
\]
with $C(\Omega)$ a constant depending only on the set $\Omega.$
Combining Lemma \ref{r0} with (\ref{step3}) in Step 3, we have
\begin{eqnarray}\label{N}
\|\Phi_{\rho,N}D^\alpha u\|_{\frac{\delta}{2}+n+1}\leq
\frac{\widetilde
C_{22}L^{|\alpha|-2}}{\rho^{(s+n)(|\alpha|-3)}}\big((|\alpha|-3)!\big)^{s}
(N/\rho)^{\frac{s\delta}{2}} .
\end{eqnarray}
Next, we prove
\begin{equation}\label{finall}
\|\Phi_{\rho,N}{\widetilde\Lambda}^{\sigma} D^\alpha
u\|_{\frac{\delta}{2}-\frac{\delta}{2}+n+1}\leq \frac{\widetilde
C_{23}L^{|\alpha|-2}}{\rho^{(s+n)(|\alpha|-3)}}\big((|\alpha|-3)!\big)^{s}
(N/\rho)^{\frac{s\delta}{2}} .
\end{equation}
Observe that
  \[\|\Phi_{\rho,N}{\widetilde\Lambda}^{\sigma} D^\alpha
 u\|_{\frac{\delta}{2}-\frac{\delta}{2}+n+1}
     \leq \|[{\widetilde\Lambda}^{\sigma},~\Phi_{\rho,N}] D^\alpha
 u\|_{n+1}
       +\|{\widetilde\Lambda}^{\sigma} \Phi_{\rho,N}D^\alpha
 u\|_{n+1}.\]
By the same method as that in Step 1, we get the estimate on the
first term of the right side, that is,
  \[
  \|[{\widetilde\Lambda}^{\sigma},~\Phi_{\rho,N}] D^\alpha u\|_{n+1}
       \leq
\frac{\widetilde
C_{24}L^{|\alpha|-2}}{\rho^{(s+n)(|\alpha|-3)}}\big((|\alpha|-3)!\big)^{s}
(N/\rho)^{\frac{s\delta}{2}} .
\]
Then it remains  to estimate the second term. A direct calculation
gives that
\begin{align*}
  &\|{\widetilde\Lambda}^{\sigma}\Phi_{\rho,N}D^\alpha u\|_{n+1}^2\\
  &=\RE \big(\p \Phi_{\rho,N}D^\alpha u,~
      a^{-1}\Lambda^{2n+2}\Phi_{\rho,N}D^\alpha u\big)
      -\RE\big(\widetilde X_0\Phi_{\rho,N}D^\alpha u,~
      a^{-1}\Lambda^{2n+2}\Phi_{\rho,N}D^\alpha
 u\big)\\
  &={\rm Re}\big(\p\Phi_{\rho,N}D^\alpha u,
    ~a^{-1}\Lambda^{2n+2}\Phi_{\rho,N}D^\alpha u\big)
    -{\frac 1 2}\big(\Phi_{\rho,N}D^\alpha
    u,~[\Lambda^{2n+2},~a^{-1}]\widetilde X_0
    \Phi_{\rho,N}D^\alpha u\big)\\
    &\indent-{\frac 1 2}\big(\Phi_{\rho,N}D^\alpha
 u,~[a^{-1}\Lambda^{2n+2},~\widetilde X_0]
    \Phi_{\rho,N}D^\alpha u\big)\\
  &\leq \widetilde C_{25}\big\{~ \|\p\Phi_{\rho,N}D^\alpha
 u\|_{-\frac{\delta}{2}+n+1}^2
    +\|\Phi_{\rho,N}D^\alpha u\|_{\frac{\delta}{2}+n+1}^2~\big\}.
\end{align*}
This along with  \reff {step3} and \reff{N} shows at once
 \[ \norm{{\widetilde\Lambda}^{\sigma}\Phi_{\rho,N}D^\alpha u}_{n+1}
    \leq\frac{\widetilde C_{26}L^{|\alpha|-2}}{\rho^{(s+n)(|\alpha|-3)}}
    \big((|\alpha|-3)!\big)^{s}(N/\rho)^{\frac{s\delta}{2}}.\]
and hence \reff{finall} follows if we choose $\widetilde
C_{23}=\widetilde C_{24}+\widetilde C_{26}.$ Now by \reff{N} and
\reff{finall}, we obtain the desired inequality \reff{r-theta} if we
choose $\widetilde C_0=\widetilde C_{22}+\widetilde C_{23}$. This
completes the proof of Step 4.

\end{proof}

In  quite  the similar way as that in the proof of Lemma \ref{r1/3},
we can prove by induction the following

\begin{lem}\label{r1}
Assume that $(E)_{r, N-1}$ is true for any $r\in[0, 1]$, then there
exists a constant $C_3,$ depending only on $\sigma, s, n, \norm
u_{H^{n+6}(W)}$ and $\norm{a}_{C^{2n+2}(\Omega)},$  such that for
any $r\in[{\delta\over2},\, \delta],$ if $ L\geq
\max\Big\{2^{s+1}B_a,\, B_f,
\,4^{n+3}\inner{\norm{u}_{H^{n+6}(W)}+1}\Big\},$ we have, for all
$\alpha, \:\abs\alpha=N,$
\[
  \|\Phi_{\rho,N}D^\alpha
  u\|_{r+n+1}+\|\Phi_{\rho,N}{\widetilde\Lambda}^{\sigma} D^\alpha
  u\|_{r-{\delta\over2}+n+1}\leq
  \frac{C_{3}L^{|\alpha|-2}}{\rho^{(s+n)(|\alpha|-3)}}
  \big((|\alpha|-3)!\big)^{s}(N/\rho)^{sr}.
\]
Inductively, For any $m\in\Natural$ such that
${{m\delta}\over2}<1+{\delta\over2},$ the above inequality still
holds for any $r$ with $\frac{(m-1)\delta}{2}\leq r\leq
\frac{m\delta}{2},$ and hence for all $r$ with $0\leq r\leq 1.$

\end{lem}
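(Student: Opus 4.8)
The plan is to run, level by level, the four‑step scheme used for Lemma~\ref{r1/3}, enlarging the range of admissible exponents by $\delta/2$ at each level. Fix $N$ and keep the standing hypothesis that $(E)_{r,k}$ holds for all $k\le N-1$ and all $r\in[0,1]$ (available since Proposition~\ref{prp4} is proved by induction on $N$). Writing $r_\star=m\delta/2$, $m\in\NN$, one has that Lemma~\ref{r0} is the case $m=0$ and Lemma~\ref{r1/3} the case $m=1$; the claim to establish is that once $(E)_{r,N}$ is known for every $r\le(m-1)\delta/2$, it holds for every $r\le m\delta/2$ as well, with a constant acquiring only a fixed factor depending on $s,\sigma,n,\norm{u}_{H^{n+6}(W)},\norm{a}_{C^{2n+2}(\Omega)}$ and with the same lower bound on $L$ as in Lemma~\ref{r1/3}. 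Since $s\ge 2/\delta$, the exponents $r_\star$ exhaust $[0,1]$ after at most $m_0:=\lceil 2/\delta\rceil$ levels — this is precisely the content of the hypothesis $\frac{m\delta}{2}<1+\frac{\delta}{2}$ — and $C_3$ is obtained by folding the $m_0$ per‑level factors into one.

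For a single level it suffices, exactly as for Lemma~\ref{r1/3}, to establish
\[
\norm{\Phi_{\rho,N}D^\alpha u}_{r_\star+n+1}+\norm{\Phi_{\rho,N}\widetilde\Lambda^{\sigma} D^\alpha u}_{r_\star-\frac{\delta}{2}+n+1}\le\frac{\widetilde C\,L^{\abs\alpha-2}}{\rho^{(s+n)(\abs\alpha-3)}}\big((\abs\alpha-3)!\big)^{s}\nrho^{sr_\star}
\]
for all $\abs\alpha=N$; the intermediate values $r\in[(m-1)\delta/2,m\delta/2]$ then follow by interpolating this bound against the level‑$(m-1)$ estimate. To prove the displayed inequality I would apply the subelliptic estimate \reff{sub0} to $\Phi_{\rho,N}D^\alpha u$, reducing the first summand to a bound for $\norm{\mc P\,\Phi_{\rho,N}D^\alpha u}_{r_\star-\delta+n+1}$ (the low‑order term being handled by Lemma~\ref{r0}), then write $\mc P\,\Phi_{\rho,N}D^\alpha u=[\mc P,\Phi_{\rho,N}D^\alpha]u+\Phi_{\rho,N}D^\alpha(\mc Pu)$, estimate $\Phi_{\rho,N}D^\alpha(\mc Pu)$ from $f=\mc Pu\in G^s(\bar\Omega)$ as in Step~3 of Lemma~\ref{r1/3}, and split the commutator into its $X_0$‑part, its $a[\widetilde\Lambda^{2\sigma},\cdot\,]$‑part and its Leibniz remainder $[a,D^\alpha]\widetilde\Lambda^{2\sigma}$‑part. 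The $X_0$‑ and $a[\widetilde\Lambda^{2\sigma},\cdot\,]$‑parts are treated as in Steps~1--2 of Lemma~\ref{r1/3}, with one systematic change: there the commutator was handled at Sobolev order $-\delta/2+n+1$ and the quantities $\norm{\Phi_{\tilde\rho,N}\widetilde\Lambda^{\sigma} D^\alpha u}_{\cdot}$, $\norm{\Phi_{\tilde\rho,N}D^\alpha u}_{\cdot}$ produced by Lemma~\ref{+lem9} were controlled via Lemma~\ref{r0}; here the commutator is handled at order $r_\star-\delta+n+1$ and those quantities are controlled instead by the already‑established estimates $(E)_{r_\star-\delta/2,N}$ and $(E)_{r_\star-\delta,N}$, i.e.\ by levels $m-1$ and $m-2$. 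The $\norm{\cdot}_0$‑terms are again absorbed by Lemma~\ref{tech}, and the $\widetilde\Lambda^{\sigma}$‑half of the displayed bound is recovered, as in Step~4, from the integration‑by‑parts identity for $\norm{\widetilde\Lambda^{\sigma}\Phi_{\rho,N}D^\alpha u}_{r_\star-\delta/2+n+1}^2$ together with the bounds just obtained. That the step size must be $\delta/2$ is already visible here: Lemma~\ref{+lem9} applied at order $r_\star-\delta+n+1$ produces a factor $\nrho^{\sigma}$ (resp.\ $\nrho^{2\sigma}$) in front of a $\widetilde\Lambda^{\sigma}D^\alpha$‑ (resp.\ $D^\alpha$‑) quantity one (resp.\ two) levels below $r_\star$, which is available precisely because the window advanced by only $\delta/2$; the surplus power is harmless because the target carries $\nrho^{sr_\star}$ with $sr_\star\ge s\delta/2\ge 1>\sigma$, the same inequality $\frac{s\delta}{2}\ge\sigma$ exploited throughout Lemma~\ref{r1/3}.

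The genuine difficulty — the place where the phrase \emph{quite similar} conceals real work — is the Leibniz remainder $[a,D^\alpha]\widetilde\Lambda^{2\sigma}u=-\sum_{0<\gamma\le\alpha}C_\alpha^\gamma (D^\gamma a)\,\widetilde\Lambda^{2\sigma}D^{\alpha-\gamma}u$. The pieces with $\abs\gamma$ comparable to $N$ are harmless by the smoothness of $u$ and the Gevrey decay of the derivatives of $a$; the pieces with $2\le\abs\gamma\le\abs\alpha-2$ are handled, as in Step~2 of Lemma~\ref{r1/3}, by feeding the factor $\widetilde\Lambda^{2\sigma}D^{\alpha-\gamma}u$ back into $(E)_{\cdot,N-1}$ after one more $\widetilde\Lambda^{\sigma}$‑commutation — but now the relevant Sobolev order is $r_\star-\delta+\sigma+n+1$, so the corresponding exponent $(m-1)\delta/2+\sigma$ can exceed $1$ when $\sigma$ is close to $1$. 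The main obstacle is therefore to keep these high‑index pieces inside the range where the induction hypotheses apply; I expect this to require re‑expressing $\widetilde\Lambda^{2\sigma}D^{\alpha-\gamma}u$ through the equation itself — using $a\widetilde\Lambda^{2\sigma}u=f-X_0u$, so that $\widetilde\Lambda^{2\sigma}$ falling on lower‑order derivatives of $u$ is traded, modulo commutators with $a$ and with $X_0$, for derivatives of $f$ (which are Gevrey) and for derivatives of $u$ of strictly smaller order in $v$ and only one higher order in $(t,x)$, i.e.\ a secondary induction on the number of $\widetilde\Lambda^{\sigma}$‑factors. Once this rerouting is in place, the remaining verifications are routine though lengthy: one checks that every power of $\nrho$ produced by Lemma~\ref{+lem9}, Lemma~\ref{3.1} and the interpolation inequality \reff{interpolation} stays below $\nrho^{sr_\star}$, that every commutator lemma is invoked at a Sobolev order in $[1,n+3]$, and that the factorial weights and the powers of $L$ recombine as in Lemma~\ref{r1/3} so that each level costs only a constant independent of $N,\rho$ and $r$. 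Granting all of this, the induction on $m$ up to $m_0$ yields the estimate for every $r\in[0,1]$, which is the assertion of the lemma.
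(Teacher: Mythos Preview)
Your level-by-level induction on $m$, using the subelliptic estimate to climb by $\delta/2$ at each step and splitting the commutator $[\mc P,\Phi_{\rho,N}D^\alpha]$ into its $X_0$-, $a[\widetilde\Lambda^{2\sigma},\cdot\,]$- and Leibniz parts, is exactly the scheme the paper sketches; the paper's own proof of this lemma says no more than that and defers all details to Lemma~\ref{r1/3}.

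You are right that the Leibniz term $(III)$ is where the phrase ``quite similar'' hides real work: at level $m$ the piece
\[
  \norm{\Phi_{\rho,N}\widetilde\Lambda^{\sigma}D^{\alpha-\gamma}u}_{\sigma+\frac{(m-1)\delta}{2}+n+1}
\]
calls for $(E)_{r,\abs{\alpha-\gamma}}$ with $r=\sigma+\frac{m\delta}{2}$, and this can exceed $1$ once $\sigma$ is close to $1$. The paper does not address this explicitly. However, the remedy is lighter than rerouting $\widetilde\Lambda^{2\sigma}u$ through the equation $a\widetilde\Lambda^{2\sigma}u=f-X_0u$ and running a secondary induction. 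Since $\sigma<1$, the overshoot $\epsilon:=\sigma+\frac{m\delta}{2}-1$ satisfies $0<\epsilon<\frac{m\delta}{2}$; so one simply trades one unit of Sobolev regularity for one more derivative of $u$. For $\abs\gamma=1$ (so $\abs{\alpha-\gamma}=N-1$) this lands at order $N$ and one invokes the \emph{already established} lower level $(E)_{\epsilon,N}$, which is available precisely because $\epsilon<\frac{m\delta}{2}$; for $\abs\gamma\ge2$ the extra derivative keeps the order $\le N-1$ and one uses $(E)_{\epsilon,\,\abs{\alpha-\gamma}+1}$ with $\epsilon\le\sigma<1$. In either case the required input is on hand, and the constants acquired are bounded since the number of levels $m_0\approx 2/\delta$ depends only on $\sigma$. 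The paper's one hint in this direction is the phrase ``we use the estimate of $(E)_{\frac{m\delta}{2},N}$''; read generously, this covers the manoeuvre above. Your equation-based fix would also work, but it is heavier machinery than the problem needs.
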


\begin{proof}
  Since the arguments  are quite similar as that in the previous
  lemma, we only present here a sketch of the proof.  Assuming
  $(E)_{\frac{m\delta}{2},N}$ with $m\geq 0$ is valid, that is, for any
$\alpha, \:\abs\alpha=N,$
\[
  \|\Phi_{\rho,N}D^\alpha
  u\|_{\frac{m\delta}{2}+n+1}+\|\Phi_{\rho,N}{\widetilde\Lambda}^{\sigma} D^\alpha
  u\|_{\frac{(m-1)\delta}{2}+n+1}\leq
  \frac{C_{2}L^{|\alpha|-2}}{\rho^{(s+n)(|\alpha|-3)}}
  \big((|\alpha|-3)!\big)^{s}(N/\rho)^{\frac{sm\delta}{2}},
\]
we need to show the validity of $(E)_{\frac{(m+1)\delta}{2},N}$, and
the validity of $(E)_{r,N}$ for
$r\in[\frac{m\delta}{2},\frac{(m+1)\delta}{2}]$ can be obtained by
using interpolation inequality \reff{interpolation}. To get the
truth of $(E)_{\frac{(m+1)\delta}{2},N},$ it suffices to prove
\begin{equation}\label{20097231}
  \|\Phi_{\rho,N}D^\alpha
  u\|_{\frac{(m+1)\delta}{2}+n+1}\leq
  \frac{\tilde C_{27}L^{|\alpha|-2}}{\rho^{(s+n)(|\alpha|-3)}}
  \big((|\alpha|-3)!\big)^{s}(N/\rho)^{\frac{s(m+1)\delta}{2}}
\end{equation}
and
\begin{equation}\label{20097232}
  \|\Phi_{\rho,N}{\widetilde\Lambda}^{\sigma} D^\alpha
  u\|_{\frac{m\delta}{2}+n+1}\leq
  \frac{\tilde C_{28}L^{|\alpha|-2}}{\rho^{(s+n)(|\alpha|-3)}}
  \big((|\alpha|-3)!\big)^{s}(N/\rho)^{\frac{s(m+1)\delta}{2}}.
\end{equation}
First, we repeat the procedure in which \reff{step1} is deduced from
the validity of $(E)_{0,N}$,  then we  use the estimate of
$(E)_{\frac{m\delta}{2},N}$ to get
\begin{eqnarray*}
  \|[\p,~ \Phi_{\rho,N} D^\alpha]
  u\|_{\frac{(m-1)\delta}{2}+n+1}\leq
  \frac{\tilde C_{29}L^{|\alpha|-2}}{\rho^{(s+n)(|\alpha|-3)}}
  \big((|\alpha|-3)!\big)^{s}(N/\rho)^{\frac{s(m+1)\delta}{2}}.
\end{eqnarray*}
Similar to the arguments as \reff{step3} to get
\begin{equation}\label{20097233}
  \|\p \Phi_{\rho,N} D^\alpha
  u\|_{\frac{(m-1)\delta}{2}+n+1}\leq
  \frac{\tilde C_{30}L^{|\alpha|-2}}{\rho^{(s+n)(|\alpha|-3)}}
  \big((|\alpha|-3)!\big)^{s}(N/\rho)^{\frac{s(m+1)\delta}{2}}.
\end{equation}
This together with the subelliptic estimate
\[
  \|\Phi_{\rho,N}D^\alpha u\|_{\frac{(m+1)\delta}{2}+n+1}
  \leq C(\Omega)\bigset{\|\p\Phi_{\rho,N}D^\alpha u\|_{\frac{(m-1)\delta}{2}+n+1}
    +\|\Phi_{\rho,N}D^\alpha u\|_{n+1}},
\]
yields the required estimate \reff{20097231}. Moreover we can deduce
that
\[
\|\Phi_{\rho,N}{\widetilde\Lambda}^{\sigma} D^\alpha
  u\|_{\frac{m\delta}{2}+n+1}\leq \tilde C_{31}\bigset{\|\p\Phi_{\rho,N}D^\alpha u\|_{\frac{(m-1)\delta}{2}+n+1}
    +\|\Phi_{\rho,N}D^\alpha u\|_{\frac{(m+1)\delta}{2}+n+1}}.
\]
In fact we have shown that the above inequality for $m=0$ in Step 4
of the proof of Lemma \ref{r1/3}, and the validity of the above
inequality for general $m$ can be deduced similarly without any
additional difficulty. Consequently, the required estimate
\reff{20097232} follows from \reff{20097233} and \reff{20097231}.
Thus the proof of Lemma \ref{r1} is completed.
\end{proof}

\bigbreak Recall that the constants $C_1, C_2, C_3$   in Lemma
\ref{r0}, Lemma \ref{r1/3} and Lemma \ref{r1} depend only on $s,
\sigma, n, \norm{u}_{H^{n+6}(W)}$ and $\norm{a}_{C^{2n+2}(\Omega)}$.
Now take $L$ in such a way that $L>\max\Big\{C_1,\,C_2,\,C_3,\,
2^{s+1}B_a, \,B_f,\, 4^{n+3}(\norm{u}_{H^{n+6}(W)}+1)\Big\}$. Then
by the above three Lemmas, we get the truth of $(E)_{r,N}$ for any
$r\in[0,~1].$ This complete the proof of Proposition \ref{prp4}.


\section{Gevrey regularity of nonlinear equation}
\label{sect4} \setcounter{equation}{0}

In this section, $\mc C_j, j\geq 4,$ will be used to denote suitable
constants depending only on $\sigma,$ the Gevrey index $s$, the
dimension $n$ and the Gevrey constants of the functions $a, F$. The
existence and the Sobolev regularity of weak solutions for
non-linear Cauchy problems was proved in \cite{MX}.  Now let $u\in
L^{\infty}_{loc}(\bb R^{2n+1})$ be a weak solution of (\ref{++1.2}).
We first prove $u\in C^\infty(\bb R^{2n+1}),$ and we need the
following stability results by nonlinear composition (see for
example \cite{Xu}).

\begin{lem}\label{composition}
Let $F(t,x,v,q)\in C^\infty(\bb R^{2n+1}\times\bb R)$ and $r\geq 0$.
If $u\in L^{\infty}_{loc}(\bb R^{2n+1})\cap H_{loc}^{r}(\bb
R^{2n+1})$, then $F(\cdot,u(\cdot))\in H_{loc}^{r}(\bb R^{2n+1}).$
\end{lem}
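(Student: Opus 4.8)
The plan is to localize and then combine the generalized chain rule with Gagliardo--Nirenberg interpolation inequalities; this is the standard tame estimate for superposition operators, so one could also simply invoke \cite{Xu}, but I sketch a self-contained argument. First I would fix an arbitrary $\phi\in C_0^\infty(\bb R^{2n+1})$ and reduce the claim to $\phi\,F(\cdot,u(\cdot))\in H^r(\bb R^{2n+1})$. Picking $\tilde\phi\in C_0^\infty(\bb R^{2n+1})$ with $\tilde\phi\equiv1$ near $\mathrm{Supp}\,\phi$, the hypothesis $u\in L^\infty_{loc}$ gives $|u|\le M$ on $\mathrm{Supp}\,\tilde\phi$ for some $M$, so only the restriction of $F$ to the compact set $\mathrm{Supp}\,\tilde\phi\times[-M,M]$ is relevant, and on that set $F$ together with all its derivatives is bounded. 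In particular, for $r=0$ one gets $\phi F(\cdot,u)\in L^\infty\subset L^2$ immediately, so from now on $r>0$.

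For the case $r=m\in\bb N$, after a Leibniz expansion in $\phi$ it suffices to bound $D^{\alpha}\big(F(\cdot,u(\cdot))\big)$ in $L^2$ on $\mathrm{Supp}\,\phi$ for $|\alpha|\le m$. By the Fa\`a di Bruno (generalized chain rule) formula this derivative is a finite linear combination, with universal coefficients, of terms
\[
\big(\partial_q^{\,j}D^{\gamma}F\big)(\cdot,u(\cdot))\prod_{i=1}^{j}D^{\beta_i}u,\qquad |\gamma|+\sum_{i=1}^{j}|\beta_i|=|\alpha|,\quad |\beta_i|\ge1 .
\]
The $F$-factor is bounded by the previous paragraph. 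Since $D^{\beta_i}u=D^{\beta_i}(\tilde\phi u)$ on $\mathrm{Supp}\,\phi$, I would apply the Gagliardo--Nirenberg--Moser inequality $\|D^{\beta}(\tilde\phi u)\|_{L^{2m/|\beta|}}\le C\|\tilde\phi u\|_{L^\infty}^{1-|\beta|/m}\|\tilde\phi u\|_{H^m}^{|\beta|/m}$ for $1\le|\beta|\le m$, followed by H\"older's inequality with the exponents $2m/|\beta_i|$, whose reciprocals sum to $\big(\sum_i|\beta_i|\big)/(2m)\le|\alpha|/(2m)\le1/2$; this puts the product in $L^2$ with norm controlled by $\|u\|_{L^\infty(\mathrm{Supp}\,\tilde\phi)}$ and $\|\tilde\phi u\|_{H^m}$. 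Summing the finitely many terms yields $\phi F(\cdot,u)\in H^m$.

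For non-integer $r=m+\theta$ with $m\in\bb N$, $0<\theta<1$: the derivatives of order $\le m$ are handled as above via $H^r\hookrightarrow H^m$, so it remains to control the Slobodeckij $\theta$-seminorm of $g=D^\alpha\big(\phi F(\cdot,u)\big)$ for $|\alpha|=m$, i.e. $\iint|h|^{-(2n+1)-2\theta}\,|g(x+h)-g(x)|^2\,dx\,dh$. Expanding $g$ through the same Leibniz/chain-rule formula and telescoping each difference of products into a sum in which exactly one factor carries the increment $g(\cdot+h)-g(\cdot)$, I would reduce matters to two ingredients: the $L^2$-moduli of continuity of the derivatives $D^{\beta_i}u$ inherited from $u\in H^r$ (again by Gagliardo--Nirenberg, now applied to difference quotients), and the Lipschitz continuity of $F$ and its derivatives on the relevant compact set, together with the $L^\infty$ bound on $u$. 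Alternatively one may use the nonlinear interpolation identity $H^r=[H^m,H^{m+1}]_{\theta,2}$ once the integer estimates are recorded with constants monotone in $\|u\|_{L^\infty}$, but this requires a Tartar-type nonlinear interpolation theorem.

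The step I expect to be the main obstacle is precisely this non-integer case: matching the H\"older exponents through the telescoped Fa\`a di Bruno expansion and verifying that the modulus-of-continuity bounds for products of derivatives of $u$ genuinely follow from $u\in L^\infty\cap H^r$. The integer case, by contrast, is a routine combination of the Leibniz rule, the chain rule, Gagliardo--Nirenberg and H\"older, and the localization reduction is immediate.
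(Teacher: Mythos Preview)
Your proposal is correct and in fact goes well beyond what the paper does: the paper gives no proof of this lemma at all, simply citing \cite{Xu} for the result and then recording, as a separate remark, the product estimate $\|u_1u_2\|_r\le C(\|u_1\|_{L^\infty}\|u_2\|_r+\|u_2\|_{L^\infty}\|u_1\|_r)$ for later use. Your self-contained sketch via Fa\`a di Bruno, Gagliardo--Nirenberg and H\"older for integer $r$, followed by a Slobodeckij/interpolation argument for the fractional part, is the standard route and is sound; the non-integer step you flag as the main obstacle is indeed where the bookkeeping is heaviest, but the telescoping-plus-Lipschitz strategy you outline does work (and is essentially what underlies the paralinearization results in \cite{Xu}). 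Since the paper is content to quote the result, either your full argument or the bare citation would be acceptable here.
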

In fact, if $u_1, u_2\in H^{r}(\bb R^{2n+1}) \cap L^{\infty}(\bb
R^{2n+1})$, then
\[
\|u_1 u_2\|_{r}\leq C_n\{ \|u_1\|_{L^{\infty}}\|u_2\|_{r} +
\|u_2\|_{L^{\infty}}\|u_1\|_{r}\}.
\]
Thus if $r>(2n+1)/2$, the Sobolev embedding theorem implies that
\begin{equation}\label{4.1}
\|u_1 u_2\|_{r}\leq C \|u_1\|_{r}\|u_2\|_{r}.
\end{equation}

Suppose that  $u\in L^{\infty}_{loc}(\bb R^{2n+1})$ is a weak
solution of (\ref{++1.2}). Then by the subelliptic estimate
(\ref{sub0}), one has
\begin{eqnarray}\label{+estimate}
\|\psi_1u\|_{r+\delta}\leq C \{~\|\psi_2
F(\cdot,u(\cdot))\|_r+\|\psi_2 u\|_r~\},
\end{eqnarray}
where $\psi_1, \psi_2 \in C_0^{\infty}(\bb R^{2n+1})$ and $\psi_2=1
$ in the support of $\psi_1$. Combining Lemma \ref{composition} and
the above subelliptic estimate (\ref{+estimate}), we have $u\in
H_{loc}^{\infty}(\bb R^{2n+1})$ by standard iteration. We state this
result in the following Proposition:

\smallskip

\begin{prp}\label{+smooth}

Let $u\in L^{\infty}_{loc}(\bb R^{2n+1})$ be a weak solution of
(\ref{++1.2}). Then $u\in C^\infty(\bb R^{2n+1})$.

\end{prp}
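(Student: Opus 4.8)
The plan is to bootstrap the Sobolev regularity of the weak solution $u$ using the subelliptic estimate (\ref{sub0}) together with the nonlinear composition stability of Lemma \ref{composition}. First I would observe that since $u\in L^\infty_{loc}(\bb R^{2n+1})$, in particular $u\in L^2_{loc}(\bb R^{2n+1})=H^0_{loc}(\bb R^{2n+1})$, so we may start the iteration at regularity level $r=0$. The strategy is then to show that if $u\in H^r_{loc}$ for some $r\geq 0$, then in fact $u\in H^{r+\delta}_{loc}$, and to iterate this gain-of-$\delta$ infinitely many times; since $\delta>0$, after finitely many steps we pass any prescribed Sobolev threshold, and letting the number of steps go to infinity yields $u\in H^\infty_{loc}(\bb R^{2n+1})$, which by the Sobolev embedding theorem gives $u\in C^\infty(\bb R^{2n+1})$.

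Next I would make the bootstrap step precise. Fix cutoffs $\psi_1,\psi_2\in C_0^\infty(\bb R^{2n+1})$ with $\psi_2\equiv 1$ on a neighborhood of the support of $\psi_1$. Since $\mc P(\psi_1 u)=\psi_1 F(\cdot,u(\cdot))+[\mc P,\psi_1]u$, and since $[\mc P,\psi_1]$ is (after inserting $\psi_2$ harmlessly, as in the estimate (\ref{T}) argument with the cutoff $\widetilde\psi$ in the $v$-variable) an operator of order $\leq 2\sigma$ with coefficients involving only $\psi_1$ and $a$, applying Proposition \ref{prp2} to the compactly supported function $\psi_1 u$ yields the estimate
\[
\|\psi_1 u\|_{r+\delta}\leq C\bigset{\|\psi_2 F(\cdot,u(\cdot))\|_r+\|\psi_2 u\|_r},
\]
which is exactly (\ref{+estimate}); here the commutator contributions are absorbed into $\|\psi_2 u\|_r$ after a standard interpolation, using that $\delta\leq\sigma$ so that commutator terms of order $<2\sigma$ cost no more than $r+\delta$ derivatives hitting $u$ already bounded by induction (one can always retreat the top-order commutator term $[a\widetilde\Lambda^{2\sigma}_v,\psi_1]u$ via the splitting $2a[\widetilde\Lambda^\sigma_v,\psi_1]\widetilde\Lambda^\sigma_v u+a[\widetilde\Lambda^\sigma_v,[\widetilde\Lambda^\sigma_v,\psi_1]]u+[a,\psi_1]\widetilde\Lambda^{2\sigma}_v u$ as was done in the proof of Proposition \ref{prp2}, together with (\ref{T})). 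Granting $u\in H^r_{loc}$, the right-hand side is finite: the term $\|\psi_2 u\|_r$ trivially, and $\|\psi_2 F(\cdot,u(\cdot))\|_r$ by Lemma \ref{composition} applied on a neighborhood of $\mathrm{supp}\,\psi_2$ (using $u\in L^\infty_{loc}\cap H^r_{loc}$). Hence $\psi_1 u\in H^{r+\delta}$; since $\psi_1$ was arbitrary, $u\in H^{r+\delta}_{loc}$.

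The main obstacle, and the only point requiring care, is the treatment of the commutator $[\mc P,\psi_1 \Lambda^r]$-type terms in the iteration: because $\mc P$ is not a classical pseudodifferential operator on $\bb R^{2n+1}$ (the coefficient $a(t,x,v)\widetilde\Lambda^{2\sigma}_v$ is unbounded in the full space), one must, exactly as the authors do throughout Section \ref{sect2}, first localize in the $v$-variable with an auxiliary cutoff $\widetilde\psi(v)$ (equal to $1$ on the relevant compact set) so that $\widetilde X_0=\partial_t+\widetilde\psi(v)\,v\cdot\partial_x$ replaces $X_0$ without changing the action on compactly supported functions, and then one controls $\|[a\widetilde\Lambda^{2\sigma}_v,\psi_1\Lambda^r]u\|_0$ via the three-term splitting above. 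Once this is handled, the rest is the routine iteration: starting from $r=0$ and adding $\delta$ each time, in $\lceil k/\delta\rceil$ steps one reaches $H^k_{loc}$ for every $k\in\NN$, so $u\in\bigcap_{k}H^k_{loc}(\bb R^{2n+1})=C^\infty(\bb R^{2n+1})$, which is the assertion of Proposition \ref{+smooth}. I would remark that this is precisely the scheme of \cite[Section 4, Proposition 4.1]{MX} adapted to the nonlinear source term $F(\cdot,u(\cdot))$, the only new ingredient being the stability Lemma \ref{composition}.
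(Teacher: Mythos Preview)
Your proposal is correct and follows essentially the same approach as the paper: the authors derive the localized estimate (\ref{+estimate}) from the subelliptic estimate (\ref{sub0}), invoke Lemma \ref{composition} to control $\|\psi_2 F(\cdot,u(\cdot))\|_r$, and then iterate from $r=0$ to conclude $u\in H^\infty_{loc}=C^\infty$. You simply spell out in more detail the commutator manipulations (the three-term splitting of $[a\widetilde\Lambda^{2\sigma}_v,\psi_1]$ and the $\widetilde X_0$ localization) that the paper leaves implicit, and you correctly identify the reference to \cite[Section 4]{MX} for the mollifier/commutator machinery.
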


\smallskip

In this section we keep the same notations that we have set up in
the previous sections. We prove the Gevrey regularity of the smooth
solution $u$ of Equation (\ref{++1.2}) on $\Omega$. Set
$W=2\Omega=\big\{(t,x);\:\inner{t^2+\abs
x^2}^{1/2}<2\big\}\times\set{v\in\Real^n,\:\abs v<2}$ and
$$
M=\max_{(t, x, v)\in{\bar {W}}}|u(t, x, v)|.
$$
Let $\{M_j\}$ be a sequence of positive coefficients. We say that it
satisfies the monotonicity condition if there exists $B_0>0$ such
that for any $j\in\bb N$,
\begin{equation}\label{monotonicity}
\frac{j!}{i!(j-i)!}M_iM_{j-i}\leq B_0M_j, ~~(i=1,2,\cdots,j).
\end{equation}
Let $\norm{u}_{C^k(\Omega)}$ be the classic H\"{o}rder norm, that
is, $\norm{u}_{C^k(\Omega)}=\sum_{j=0}^{k}\norm{D^j
u}_{L^\infty(\Omega)}.$

We study now the stability of the Gevrey regularity by the non
linear composition, which is an analogue of Lemma 1 in Friedman's
work \cite{Friedman}.

\begin{lem}\label{+Friedman}

Let $N>n+2$ and $0<\rho<1$ be given.  Let $\{M_j\}$ be a positive
sequence satisfying the monotonicity condition \reff{monotonicity}
and that for some constant $\mc C_n$ depending only on $n,$
\begin{equation}\label{++}
  \nrho^{n+2}M_{N-n-2}\leq \mc C_n M_{N-2};\qquad  M_{j}\geq \rho^{-j},\quad j\geq 2.
\end{equation}
Suppose that there exists $\mc C_4>1,$ depending only on the Gevrey
constant of $F,$ such that:

1) the function $F(t, x, v; q)$ satisfies the following conditions:
$\big\|F\big\|_{C^{n+2}(\bar{\Omega}\times [-M, M])}\leq \mc C_4$
and for any $k,l$ with $k+l\geq 1$,
\begin{eqnarray}\label{condition1}
\big\|D_{t,x,v}^{\gamma}D_q^lF\big\|_{C^{n+2}(\bar{\Omega}\times
[-M, M])}\leq \mc C_4^{k+l} M_{k-2}M_{l-2},\quad \forall
~\abs\gamma=k,
\end{eqnarray}
where we assume $M_{-j}=1$ for nonnegative integer $j.$

 2) the smooth function $g(t, x, v)$ satisfies the following
conditions: $\norm{g}_{L^\infty(\bar W)}\leq M$ and
\begin{eqnarray}\label{condition2}
\|D^j g\|_{C^{n+3}\inner{\bar W}}\leq H_0, \quad 0\leq j\leq 1,
\end{eqnarray}
and for any $0<\rho<1$ and any $j, 2\leq j\leq N,$ one has
\begin{eqnarray}\label{condition3}
\|\Phi_{\rho,j}D^\gamma g\|_{\nu+n+1}\leq H_1^{j-2}M_{j-2}, \quad
\forall ~ \abs \gamma=j,
\end{eqnarray}
where  $\nu$ is a real number satisfying $-1/2<\nu\leq 1,$ and
$H_0,H_1\geq1, \:H_1\geq \inner{4^{n+2}\mc C_4H_0}^2.$

Then there exists $\mc C_5>1,$ depending only the Gevrey constant of
$F$ and the dimension $n,$ such that for all $\rho, 0<\rho<1,$ and
all $\alpha\in\bb N^{2n+1}$ with $|\alpha|=N$,
\begin{eqnarray}\label{conclusion}
\big\|\Phi_{\rho,N}D^{\alpha}\big[F\big(\cdot,
g(\cdot)\big)\big]\big\|_{\nu+n+1}\leq \mc C_5
H_0^2H_1^{N-2}M_{N-2}.
\end{eqnarray}

\end{lem}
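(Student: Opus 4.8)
The plan is to estimate $\Phi_{\rho,N}D^\alpha[F(\cdot,g(\cdot))]$ by a multivariate Fa\`a di Bruno expansion, organizing the terms by the multiplicities of the derivatives that fall on $g$, and then bounding each resulting product using the Sobolev multiplication inequality \reff{4.1}, the hypotheses \reff{condition1}--\reff{condition3}, the cutoff bound \reff{cutoffnorm} from Lemma \ref{3.1}, and the monotonicity condition \reff{monotonicity}. Throughout, the key structural fact to exploit is that $\nu+n+1 > (2n+1)/2$, so that $H^{\nu+n+1}$ (and a fortiori $H^{n+1}$) is a Banach algebra; this turns products of factors into products of norms at the cost of dimensional constants only.

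\medskip

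First I would write, by Fa\`a di Bruno, $D^\alpha[F(\cdot,g)]$ as a finite sum over partitions of $\alpha$: schematically,
$$
D^\alpha\bigl[F(\cdot,g(\cdot))\bigr]=\sum_{l\ge 0}\ \sum_{\substack{\gamma_0+\gamma_1+\cdots+\gamma_l=\alpha\\ |\gamma_i|\ge 1\ (i\ge1)}} c_{\gamma_0,\dots,\gamma_l}\,\bigl(D_{t,x,v}^{\gamma_0}D_q^lF\bigr)(\cdot,g)\ \prod_{i=1}^l D^{\gamma_i}g,
$$
with combinatorial coefficients $c_{\gamma_0,\dots,\gamma_l}$ bounded by the corresponding multinomial coefficients. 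Then I insert $\Phi_{\rho,N}$ and redistribute it through the product: since $\Phi_{\tilde\rho,N}=1$ on the support of $\Phi_{\rho,N}$ and the various intermediate cutoffs (as in Section 3) multiply to $1$ where needed, one may freely attach a cutoff of the appropriate scale to each factor at the price of constants, exactly as is done repeatedly in the proof of Lemma \ref{r1/3}. The algebra property of $H^{\nu+n+1}$ then gives
$$
\bignorm{\Phi_{\rho,N}D^\alpha[F(\cdot,g)]}_{\nu+n+1}\le C_n\sum_{l,\ \gamma_i}c_{\gamma_0,\dots,\gamma_l}\bignorm{\Phi D_{t,x,v}^{\gamma_0}D_q^lF(\cdot,g)}_{\nu+n+1}\prod_{i=1}^l\bignorm{\Phi_{\cdot,|\gamma_i|}D^{\gamma_i}g}_{\nu+n+1},
$$
where for the $g$-factors with $|\gamma_i|\ge 2$ we apply \reff{condition3} to get $H_1^{|\gamma_i|-2}M_{|\gamma_i|-2}$, for those with $|\gamma_i|\le 1$ we use \reff{condition2} to get $H_0$, and the factor involving $F$ is handled by \reff{condition1} together with the chain rule estimate for $D^{\gamma_0}(D_q^lF)(\cdot,g)$ — which itself unwinds into another, shorter Fa\`a di Bruno sum controlled inductively (or directly, since $|\gamma_0|$ is small relative to $N$ in the dominant terms) — yielding a bound of the shape $\mc C_4^{|\gamma_0|+l}M_{|\gamma_0|-2}M_{l-2}$ times a product of $M$'s coming from the inner $g$-derivatives.

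\medskip

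The combinatorial bookkeeping is then closed by the monotonicity condition: repeatedly applying \reff{monotonicity} collapses any product $\prod_j M_{k_j}$ with $\sum_j k_j = N - (\text{number of factors})\cdot 2 + \text{const}$ into $B_0^{(\#\text{factors})}M_{N-2}$, up to harmless shifts absorbed by the convention $M_{-j}=1$; the number of terms and the multinomial coefficients $c$ are dominated geometrically, so choosing $H_1\ge(4^{n+2}\mc C_4 H_0)^2$ makes the series $\sum_l(\mbox{const}/H_1)^{\cdots}$ converge, pulling out exactly one $H_0^2$ (one power from each of the two "low-order" $g$-factor slots allowed before the $H_1$-powered regime takes over) and the clean power $H_1^{N-2}$. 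The term with $l=0$ (no derivative on $q$, all derivatives on the explicit $(t,x,v)$ variables of $F$) is handled separately and directly by \reff{condition1} with $\gamma_0=\alpha$, giving $\mc C_4^N M_{N-2}\le \mc C_5 H_1^{N-2}M_{N-2}$; and the term with a single full-strength factor $D^\alpha g$ uses \reff{condition3} directly, contributing $\mc C_4 H_1^{N-2}M_{N-2}$. The condition \reff{++}, $M_j\ge\rho^{-j}$ and $\nrho^{n+2}M_{N-n-2}\le\mc C_n M_{N-2}$, is what lets the low-order factors (those with $|\gamma_i|\le n+2$, where \reff{cutoffnorm} costs a factor $\nrho^{\mu}$) be reabsorbed without spoiling the final bound — this is the exact analogue of how Lemma \ref{tech} is used in Section 4.

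\medskip

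\textbf{The main obstacle} I anticipate is not any single estimate but the organization of the Fa\`a di Bruno sum so that the powers of $H_0$, $H_1$, $\mc C_4$ and $B_0$ come out exactly as claimed — in particular, ensuring that no more than two factors ever fall into the "$H_0$-regime", that every other factor genuinely contributes an $H_1$, and that the shifts in the indices of the $M_j$'s (caused by the $-2$'s in \reff{condition1}, \reff{condition3} and by the Leibniz splitting of $\Phi_{\rho,N}$) all cancel against the monotonicity applications. A secondary technical point is the inner chain-rule estimate for $D^{\gamma_0}\bigl(D_q^lF\bigr)(\cdot,g)$: one must verify it does not re-introduce uncontrolled high-order derivatives of $g$, which is true because $|\gamma_0|\le N$ but the genuinely large case $|\gamma_0|$ close to $N$ forces $l$ small and the inner $g$-derivatives low-order, so it reduces to the cases already treated. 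Once this bookkeeping is pinned down, the rest is a routine application of \reff{4.1}, Lemma \ref{3.1}, and \reff{monotonicity}.
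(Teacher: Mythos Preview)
Your overall strategy---Fa\`a di Bruno, the algebra property of $H^{\nu+n+1}$, and the monotonicity of $\{M_j\}$---matches the paper's, but the execution you sketch diverges in two substantive ways and contains one genuine misconception.

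\textbf{Organizational difference.} The paper does \emph{not} attempt to track the combinatorics of the Fa\`a di Bruno sum directly. Instead it follows Friedman's classical majorizing--series trick: one introduces formal polynomials
\[
w(y)=H_0\Bigl(y+\sum_{j=2}^N\frac{H_1^{j-2}M_{j-2}}{j!}y^j\Bigr),\quad
X_1(w)=1+\mc C_4 w+\sum_{j=2}^N\frac{\mc C_4^jM_{j-2}}{j!}w^j,\quad
X_2(y)=1+\mc C_4 y+\cdots,
\]
so that every Sobolev norm appearing in the Fa\`a di Bruno expansion is majorized by the corresponding Taylor coefficient of $X_1(w(y))X_2(y)$ at $y=0$. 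The whole estimate then reduces to bounding $\frac{d^N}{dy^N}\bigl(X_1(w(y))X_2(y)\bigr)\big|_{y=0}$, which is done by an explicit inductive estimate $w^i(y)\ll 35^{i-1}H_0^i(\cdots)$. This packages all the multinomial bookkeeping into a single one--variable computation; your direct approach can be made to work, but the accounting is considerably more delicate than you suggest.

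\textbf{A real gap.} Your assertion that ``no more than two factors ever fall into the $H_0$-regime'' is incorrect. In the Fa\`a di Bruno expansion the term with $l=|\alpha-\beta|$ and all $|\gamma_i|=1$ contributes a factor $H_0^l$ with $l$ as large as $N$. The reason the final bound carries only $H_0^2$ is \emph{not} that high powers of $H_0$ never appear, but that the hypothesis $H_1\ge(4^{n+2}\mc C_4 H_0)^2$ lets one trade $H_0^l$ against a fraction of the $H_1$-budget coming from the $M_{l-2}$ factor attached to $\partial_q^lF$. In the paper this trade is exactly what the inequality $35^{j-1}\mc C_4^jH_0^jM_{j-2}\le \mc C_4 H_0 H_1^{j-2}M_{j-2}$ accomplishes inside the majorizing series. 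If you pursue the direct route you must make this absorption explicit.

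\textbf{Two simplifications you are missing.} First, the paper's opening move is to replace $\Phi_{\rho,N}$ by the \emph{fixed--scale} cutoff $\Phi_{\tilde\rho,3}$ via Lemma \ref{3.1}, splitting off a remainder $\mc I_2$ that is handled by condition \reff{++}; this is where \reff{++} is actually used, not (as you write) to reabsorb factors with small $|\gamma_i|$. After this reduction the cutoff no longer carries powers of $N/\rho$, which is what makes the subsequent product estimate clean. Second, your ``inner Fa\`a di Bruno'' for the $F$-factor is unnecessary: since $\nu+n+1\le n+2$, the paper simply bounds $\bigl\|\Phi_{\tilde\rho,3}(D_{t,x,v}^\beta\partial_q^lF)(\cdot,g)\bigr\|_{\nu+n+1}$ by the $H^{n+2}$ norm, which in turn is controlled by $C_nH_0\,\|D_{t,x,v}^\beta\partial_q^lF\|_{C^{n+2}}$ using only the low--order bound \reff{condition2} on $g$. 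No recursion is needed there.
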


\begin{proof} In the proof, we use $C_n$ to denote  constants which depend only on
$n$ and may be different in different contexts. In the following,
for each $\rho,$ we always denote
\[
 \tilde\rho=\frac{(N-1)\rho}{N},
 \qquad \tilde{\tilde\rho}=\frac{(N-2)\rho}{N}.
\]
Observe that for $\rho,\, \tilde\rho, \,\tilde{\tilde\rho},$ we have
the relation \reff{fre}. Since $\Phi_{\tilde\rho, 3}=1$ in the
support of $\Phi_{\rho, N},$ then by Lemma \ref{3.1}, one has
\begin{align*}
  &\bignorm{\Phi_{\rho,N}D^\alpha[F(\cdot,g(\cdot))]}_{\nu+n+1}
  =\bignorm{\Phi_{\rho,N}\Phi_{\tilde\rho, 3}D^\alpha[F(\cdot,g(\cdot))]}_{\nu+n+1}\\
  &\leq C_n\Big\{\bignorm{\Phi_{\tilde\rho, 3}
  D^\alpha[F(\cdot,g(\cdot))]}_{\nu+n+1}+\nrho^{n+1+\nu}\bignorm{\Phi_{\tilde\rho, 3}
  D^\alpha[F(\cdot,g(\cdot))]}_{0}\Big\} \\
  &=:\mc I_1+\mc I_2.
\end{align*}
The proof will be completed if we can show that there exists a
constant $\mc E$ depending only the Gevrey constant of $F$ and the
dimension $n,$ such that
\begin{equation}\label{mc1}
  \mc I_1\leq
  \mc EH_0^2H_1^{N-2}M_{N-2}.
\end{equation}

\vspace{0.5ex} Indeed, choose a multi-index $\tilde\alpha\leq\alpha$
such that $\abs{\tilde\alpha}=n.$ Then
\begin{align*}
  \mc I_2&=C_n
  \nrho^{n+1+\nu}\bignorm{\Phi_{\tilde\rho, 3}D^{\tilde\alpha}\Phi_{\tilde{\tilde\rho}, 3}
  D^{\alpha-\tilde\alpha}[F(\cdot,g(\cdot))]}_{0}\\
  & \leq C_n
  \nrho^{n+1+\nu}\bignorm{\Phi_{\tilde{\tilde\rho}, 3}
  D^{\alpha-\tilde\alpha}[F(\cdot,g(\cdot))]}_{n}\\
  & \leq C_n
  \nrho^{n+2}\bignorm{\Phi_{\tilde{\tilde\rho}, 3}
  D^{\alpha-\tilde\alpha}[F(\cdot,g(\cdot))]}_{\nu+n+1}.
\end{align*}
Assuming that \reff{mc1} holds, then by virtue of the condition
\reff{++}, we have
\begin{align*}
  \mc I_2\leq C_n
  \nrho^{n+2}\bignorm{\Phi_{\tilde{\tilde\rho}, 3}
  D^{\alpha-\tilde\alpha}[F(\cdot,g(\cdot))]}_{\nu+n+1}&\leq
  C_n\nrho^{n+2}\mc EH_0^2H^{N-n-2}M_{N-n-2}\\
  &\leq C_n\mc EH_0^2H^{N-2}M_{N-2}.
\end{align*}
With \reff{mc1}, the conclusion follows at once.

\vspace{0.5ex} The rest is devoted to the proof of \reff{mc1}. By
Faa di Bruno' formula,
$\Phi_{\tilde\rho,3}D^\alpha[F(\cdot,g(\cdot))]$ is the linear
combination of terms of the form
\begin{eqnarray}\label{++A}
\Phi_{\tilde\rho,3}\inner{D_{t,x,v}^\beta\partial_q^lF}(\cdot,g(\cdot))\cdot\prod_{j=1}^l
D^{\gamma_j}g,
\end{eqnarray}
where $|\beta|+l\leq |\alpha|$ and $
\gamma_1+\gamma_2+\cdots+\gamma_l=\alpha-\beta,$  and  if
$\gamma_i=0$,  $D^{\gamma_i}g$ doesn't appear in (\ref{++A}).

\vspace{0.5ex} Next we estimate the Sobolev norm of the form
\reff{++A}. Take a function $\Psi\in C_0^\infty(W)$ such that
$\Psi=1$ in $\Omega.$ Note that $n+1+\nu>(2n+1)/2.$ We apply
(\ref{4.1}) to compute
\begin{align*}
  &\|\Phi_{\tilde\rho,3}\inner{D_{t,x,v}^\beta\partial_q^lF}(\cdot,g(\cdot))\cdot\prod_{j=1}^l
D^{\gamma_j}g\|_{\nu+n+1} \\
&\leq\big\|\Phi_{\tilde\rho,3}
  \inner{D_{t,x,v}^\beta\partial_q^lF}(\cdot,g(\cdot))\big\|_{\nu+n+1}
  \cdot\prod_{j=1}^
  {l}
\big\| \Psi_j D^{\gamma_j}g\big\|_{\nu+n+1},
\end{align*}
where  $\Psi_j$ is given by setting $\Psi_j=\Psi$ if
$\abs{\gamma_j}=1,$ and $\Psi_j=\Phi_{\tilde{\tilde\rho},
\abs{\gamma_j}}$ if $\abs{\gamma_j}\geq2.$ Moreover a direct
computation yields
\begin{align*}
&\big\|\Phi_{\tilde\rho,3}
  \inner{D_{t,x,v}^\beta\partial_q^lF}(\cdot,g(\cdot))\big\|_{\nu+n+1}
  \leq\big\|\Phi_{\tilde\rho,3}\inner{D_{t,x,v}^\beta\partial_q^lF}(\cdot,g(\cdot))
 \big\|_{n+2}\\
  & \leq C_n H_0 \set{\sup\abs{D^{n+2}\Phi_{\tilde\rho,3}}
  \cdot\bignorm{D_{t,x,v}^{\beta}\partial_q^{l}F}_{C(\bar{\Omega}\times[-M,M])}
  +\bignorm{D_{t,x,v}^{\beta}\partial_q^{l}F}_{C^{n+2}(\bar{
  \Omega}\times[-M,M])}}\\
  &\leq C_n H_0 \set{\inner{3\over\rho}^{n+2}\bignorm{D_{t,x,v}^{\beta}\partial_q^{l}F}_{C(\bar{
\Omega}\times[-M,M])}
  +\bignorm{D_{t,x,v}^{\beta}\partial_q^{l}F}_{C^{n+2}(\bar{ \Omega}\times[-M,M])}}
\end{align*}
In the last inequality, we have used \reff{cutoff}. Without loss of
generality we may assume $\abs\beta\geq n+2.$ Then we may choose
$\tilde\beta\leq \beta$ such that
$\big|\tilde\beta\big|=\abs\beta-(n+2).$ Thus by \reff{++},
\reff{condition1} and the monotonicity condition
\reff{monotonicity}, one has
\[
  \bignorm{D_{t,x,v}^{\beta}\partial_q^{l}F}_{C^{n+2}(\bar{ \Omega}\times[-M,M])}
  \leq M_{|\beta|-2}M_{l-2},
\]
and
\begin{align*}
  \inner{3\over\rho}^{n+2}\bignorm{D_{t,x,v}^{\beta}\partial_q^{l}F}_{C(\bar{
  \Omega}\times[-M,M])}&\leq\inner{3\over\rho}^{n+2}\bignorm{D_{t,x,v}^{\tilde\beta}
  \partial_q^{l}F}_{C^{n+2}(\bar{\Omega}\times[-M,M])}\\
  &\leq3^{n+2}M_{n+2}M_{|\tilde\beta|-2}M_{l-2}\\
  &\leq3^{n+2}M_{|\beta|-2}M_{l-2}.
\end{align*}
Hence,
\begin{align*}
  \big\|\Phi_{\tilde\rho,3}
  \inner{D_{t,x,v}^\beta\partial_q^lF}(\cdot,g(\cdot))\big\|_{\nu+n+1}
  \leq C_n H_0 M_{|\beta|-2}M_{l-2}.
\end{align*}
Hence
\begin{equation}\label{formnorm}
  \|\Phi_{\tilde\rho,3}\inner{D_{t,x,v}^\beta\partial_q^lF}(\cdot,g(\cdot))\prod_{j=1}^l
D^{\gamma_j}g\|_{\nu+n+1} \leq  C_n H_0 M_{|\beta|-2}M_{l-2}
  \prod_{j=1}^
  {l}
\big\| \Psi_j D^{\gamma_j}g\big\|_{\nu+n+1},
\end{equation}

\vspace{0.5ex} By virtue of (\ref{condition2})-(\ref{condition3})
and (\ref{++A})-\reff{formnorm}, the situation is now similar to
\cite{Friedman}. In fact, we work with the Sobolev norm, and we
shall follow the idea of \cite{Friedman} to prove \reff{mc1}.  First
we define the polynomial functions \dol{w,\,X_1,\,X_2} in
\dol{\Real} as follows:
\[
  w=w(y)=H_0\inner{y+\sum_{j=2}^N \frac{H_1^{j-2}M_{j-2}y^j}{j!}}, \quad y\in \Real;
\]
\[
  X_1(w)=1+\mc C_4w+\sum_{j=2}^N
  \frac{\mc C_4^{j}M_{j-2}w^j}{j!};
\]
\[
  X_2(y)=1+\mc C_4y+\sum_{j=2}^N \frac{\mc C_4^{j}M_{j-2}y^j}{j!},\quad
  y\in\Real.
\]
By the conditions \reff{condition2} and \reff{condition3},  we have
\[
 \big\|\Psi_{j}D ^j
 g\big\|_{\nu+n+1}
 \leq \frac{d^{j}w(y)}{dy^{j}}\Big|_{y=0},\quad \forall~~1\leq j\leq
N;
\]
Define \dol{X(y,w)=X_1(w)X_2(y).} Then by virtue of
\reff{condition1}, it follows
\[
 M_{k-2}M_{l-2}\leq \frac{\partial^{k+l}X(y,w)}{\partial y^{k}\partial w^l}\Big|_{(y,w)=(0,0)}
 ,\quad
  \forall~~2\leq k,\:l\leq N.
\]
By \reff{formnorm} and the above two inequalities, we get that for
all \dol{\alpha, \:\abs\alpha=N,}
\begin{equation*}
   \mc I_1=C_n\bignorm{\Phi_{\tilde\rho, 3}
  D^\alpha[F(\cdot,g(\cdot))]}_{\nu+n+1}\leq C_nH_0\frac{d^N}{dy^N}
  X\inner{y,w(y)}\Big|_{y=0}.
\end{equation*}
Hence, the proof of \reff{mc1} will be complete if we show that,
\begin{equation}\label{+++++a}
 \frac{d^N}{dy^N}
  \biginner{X_1\inner{w(y)}X_2(y)}\Big|_{y=0}\leq 72 \mc C_4H_0H_1^{N-2}M_{N-2}.
\end{equation}

To prove the above inequality, we need to treat
$X_j^{(k)}(0):=\frac{d^k}{dy^k}X_j(y)\Big|_{y=0},\,0\leq k\leq
N,\,j=1,2.$ We say \dol{w(y)\ll h(y)} if the following relation
holds:
\[
  w^{(j)}(0)\leq h^{(j)}(0),\quad 0\leq j\leq N.
\]
Observe that
\[
 w(y)\ll w(y)=H_0\inner{y+\sum_{j=2}^N
 \frac{H_1^{j-2}M_{j-2}y^j}{j!}}.
\]
 We can prove that
\begin{equation}\label{w2}
  w^2(y)\ll 35 H_0^2\inner{y^2+\sum_{j=3}^N
 \frac{H_1^{j-3}M_{j-3}y^j}{(j-1)!}}.
\end{equation}
In fact, a direct verification shows that
\[
  w^2(y)=H_0^2\set{y^2+\sum_{j=3}^N\bigcom{\frac{2H_1^{j-3}M_{j-3}}{(j-1)!}+\sum_{i=2}^{j-2}
  \frac{H_1^{j-4}M_{i-2}M_{j-i-2}}{i!(j-i)!}}y^j}+O(y^{N+1}).
\]
Since \dol{\set{M_j}} satisfies the monotonicity condition
\reff{monotonicity},  we compute
\[
  \sum_{i=2}^{j-2}
  \frac{H_1^{j-4}M_{i-2}M_{j-i-2}}{i!(j-i)!}\leq
  \frac{4 H_1^{j-4}M_{j-4}}{(j-4)!j^2}\sum_{i=2}^{j-2}
  \frac{j^2}{i^2(j-i)^2}\leq\frac{32 H_1^{j-3}M_{j-3}}{(j-1)!}.
\]
Combing these, we obtain \reff{w2}.  Inductively, we have the
following relations:
\[
   w^i(y)\ll 35^{i-1} H_0^i\inner{y^{i}+\sum_{j=i+1}^N
 \frac{H_1^{j-i-1}M_{j-i-1}y^j}{(j-i+1)!}},\quad 2\leq i\leq N-1;
\]
\[
  w^N(y)\ll 35^N H_0^Ny^N.
\]
Thus by the definition of \dol{X_1}, it follows that
\begin{align*}
  &X_1(y)=X_1\inner{w(y)}\ll 1+\mc C_4H_0
  y+\inner{H_0M_0/2+35\mc C_4^2M_0H_0^2/2}y^2\\
  &+\sum_{j=3}^N\inner{\frac{H_0H_1^{j-2}M_{j-2}}{j!}+\frac{35^{j-1}\mc C_4^jH_0^jM_{j-2}}{j!}
  +\sum_{i=2}^{j-1}\frac{35^{i-1}\mc C_4^iH_0^iH_1^{j-i-1}M_{i-2}M_{j-i-1}}{i!(j-i+1)!}}y^j.
\end{align*}
This gives
\[
  X_1(0)=1,\quad X_1'(0)\leq \mc C_4H_0,\quad X_1^{(2)}(0)\leq
  H_0M_0+35\mc C_4^2M_0H_0^2,
\]
and moreover for \dol{j\geq3,}
\[
  X_1^{(j)}(0)\leq \mc C_4H_0H_1^{j-2}M_{j-2}+35^{j-1}\mc C_4^jH_0^jM_{j-2}
  +\sum_{i=2}^{j-1}\frac{j!35^{i-1}\mc C_4^iH_0^iH_1^{j-i-1}M_{i-2}M_{j-i-1}}{i!(j-i+1)!}.
\]
Observe that \dol{H_1\geq (35\mc C_4H_0)^2,} and hence
\dol{X_1^{(2)}\leq 2\mc C_4H_0H_1M_0,} and for \dol{j\geq3,}
\begin{align*}
  X_1^{(j)}(0)&\leq 2C_{4}H_0H_1^{j-2}M_{j-2}+
  \frac{4C_{4}(j-2)!H_0H_1^{j-2}M_{j-3}}{(j-3)!}\sum_{i=2}^{j-1}\frac{j^2}{i^2(j-i)^2}\\
&  \leq 6C_{4}H_0H_1^{j-2}M_{j-2} .
\end{align*}
On the other hand, it is clear that
\[
  X_2(0)=1,\quad X_2'(0)\leq \mc C_4,\quad X_2^{(j)}(0)\leq \mc C_4^jM_{j-2},\:\,2\leq j\leq N.
\]
By virtue of the above relations, we have, for \dol{H_1\geq
\inner{35\mc C_4H_0}^2,}
\begin{align*}
  &\frac{d^N}{dy^N}X\inner{y,
  w(y)}\Big|_{y=0}=\sum_{j=0}^N\frac{N!}{j!(N-j)!}X_1^{(j)}(0)X_2^{(N-j)}(0)\\
  &\leq \mc C_4^{N}M_{N-2}+\mc C_4^NN
  H_0M_{N-3}+2N(N-1)\mc C_4^{N-1}H_0H_1M_0M_{N-4}+6\mc C_4^2 H_0H_1^{N-3}M_{N-3}\\
  &\indent+6\mc C_4H_0H_1^{N-2}M_{N-2}+6C_{4}\sum_{j=3}^{N-2}
  \frac{N!}{j!(N-j)!}H_0H_1^{j-2}M_{j-2}
  \mc C_4^{N-j}M_{N-j-2}\\
  &\leq 72C_{4}H_0H_1^{N-2}M_{N-2}.
\end{align*}
This gives \reff{+++++a}, and hence \reff{mc1}. This completes the
proof of Lemma \ref{+Friedman}.
\end{proof}

Now starting from the smooth solution $u$, we prove the Gevrey
regularity result as follows:

\begin{prp}\label{prp4'}

Let $\delta=\max\set{{\sigma\over4},~{\sigma\over2}-{1\over6}},$ and
let $s\geq {2\over\delta}$ be a real number.  Let
$W=2\Omega=\set{(t,x,v);\:(\frac{t}{2},\frac{x}{2},\frac{v}{2})\in\Omega}.$
Suppose that $u\in C^\infty(\bar W)$ is a solution of \reff{++1.2}
where $ a(t,x,v) \in G^s(\bar \Omega), a>0$ and $F(t,x,v,q)\in
G^s(\bar \Omega\times[-M,M]).$ Then there exits a constant $R$ such
that for any $r\in[0,1]$ and any $N\in\bb{N}$, $N\geq4,$
$$
(E)_{r, N}'\quad\quad
\begin{array}{l}   \|\Phi_{\rho,N}D^\alpha
u\|_{r+n+1}+\|\Phi_{\rho,N}{\widetilde\Lambda}^{\sigma} D^\alpha
u\|_{r-{\delta\over2}+n+1}\\
\hskip 1cm \leq
\frac{R^{|\alpha|-1}}{\rho^{(s+n)(|\alpha|-3)}}\big((|\alpha|-3)!\big)^{s}
\inner{\frac{N}{\rho}}^{sr}
\end{array}
$$
holds for all $\alpha,\:~|\alpha|= N$ and all $0<\rho< 1.$ Thus,
$u\in G^s(\Omega).$
\end{prp}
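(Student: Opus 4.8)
\emph{Proof of Proposition~\ref{prp4'} (sketch).}
The plan is to repeat the proof of Proposition~\ref{prp4} almost verbatim, the one structural change being that the term which there came from $f\in G^s(\bar\Omega)$ is now the composition $F(\cdot,u(\cdot))$, and this I would control by the Friedman-type estimate of Lemma~\ref{+Friedman}. So I would establish $(E)'_{r,N}$ by induction on $N$. For $4\le N\le n+2$ (in particular $N=4$) the estimate follows from $u\in C^\infty(\bar W)$ and Lemma~\ref{3.1}, as in the first step of the proof of Proposition~\ref{prp4}. Assume then $N>n+2$ and that $(E)'_{r,m}$ holds for all $r\in[0,1]$ and all $m$ with $4\le m\le N-1$.

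As in \S\ref{sect3+}, I would first prove $(E)'_{0,N}$: the analogue of Lemma~\ref{r0} applies unchanged, since that lemma never invokes the equation, and it actually produces the sharper bound $C_1'R^{|\alpha|-2}\rho^{-(s+n)(|\alpha|-3)}\big((|\alpha|-3)!\big)^s$ (with $C_1'$ depending only on $n$) for $\|\Phi_{\rho,N}D^\alpha u\|_{n+1}+\|\Phi_{\rho,N}\widetilde\Lambda^\sigma D^\alpha u\|_{-\delta/2+n+1}$. Then, since $\mc{P}u=F(\cdot,u(\cdot))$, one has $\mc{P}\,\Phi_{\rho,N}D^\alpha u=[\mc{P},\,\Phi_{\rho,N}D^\alpha]u+\Phi_{\rho,N}D^\alpha\big[F(\cdot,u(\cdot))\big]$; the commutator involves only $u$, $a$ and the cut-offs, and is estimated exactly as in Steps~1--2 of the proof of Lemma~\ref{r1/3}. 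For the nonlinear term I would apply Lemma~\ref{+Friedman} with $g=u$, $\nu=-\delta/2$ (admissible since $0<\delta<1/3$), $H_0=\|u\|_{C^{n+4}(\bar W)}+1$, $H_1=R$ (legitimate once $R\ge(4^{n+2}\mc C_4H_0)^2$), and an auxiliary sequence of the type $M_j\sim(j!)^s\rho^{-(s+n)j}$ (adjusted at $j=0,1$, with $M_{-j}=1$). The hypotheses then have to be checked: condition~\reff{condition3} for $g=u$ is the bound just obtained for $\|\Phi_{\rho,N}D^\alpha u\|_{-\delta/2+n+1}$ (top order) together with the induction hypothesis and plain smoothness (lower orders); the monotonicity condition~\reff{monotonicity} and the growth conditions~\reff{++} for $\{M_j\}$ reduce, using $s+n\ge 2$ and $s\ge 2/\delta>6$, to elementary factorial inequalities uniform in $\rho,N$; and condition~\reff{condition1} holds for a suitable $\mc C_4$ because $F\in G^s(\bar\Omega\times[-M,M])$. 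Lemma~\ref{+Friedman} then gives
\[
  \big\|\Phi_{\rho,N}D^\alpha\big[F(\cdot,u(\cdot))\big]\big\|_{-\delta/2+n+1}
  \le\mc C_5H_0^2\,H_1^{|\alpha|-2}M_{|\alpha|-2}
  \le\widetilde C\,R^{|\alpha|-2}\rho^{-(s+n)(|\alpha|-3)}\big((|\alpha|-3)!\big)^s,
\]
with $\widetilde C$ depending only on $n$, the Gevrey constant of $F$, and $\|u\|_{C^{n+4}(\bar W)}$ --- in particular independent of $N$ and $\rho$. This is the nonlinear counterpart of \reff{step3}.

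From here I would finish as in \S\ref{sect3+}: combining the two bounds yields the analogue of \reff{step3} for $\|\mc{P}\Phi_{\rho,N}D^\alpha u\|_{-\delta/2+n+1}$, the subelliptic estimate \reff{sub0} (Step~4 of Lemma~\ref{r1/3}) then gives $(E)'_{r,N}$ for $0\le r\le\delta/2$ with a constant $\widetilde C_jR^{|\alpha|-2}$, and the bootstrap of Lemma~\ref{r1} --- at the stage passing from level $m$ to $m+1$ re-applying Lemma~\ref{+Friedman} with $\nu=\tfrac{(m-1)\delta}{2}\in[0,1)$ and $M_j$ multiplied by the fixed factor $(N/\rho)^{s(m-1)\delta/2}$, which changes none of its hypotheses --- promotes this to all $r\in[0,1]$. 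Choosing $R$ larger than the finitely many constants $C_1',\widetilde C,\widetilde C_j,(4^{n+2}\mc C_4H_0)^2,2^{s+1}B_a,B_F,4^{n+3}(\|u\|_{H^{n+6}(W)}+1)$ that appeared converts $\widetilde C_jR^{|\alpha|-2}$ into $R^{|\alpha|-1}$, closing the induction. Finally, letting $N\to\infty$ in $(E)'_{0,N}$ and arguing as after Proposition~\ref{prp4} --- for compact $K\subset\Omega$ pick $\rho_0$ with $K\subset\Omega_{\rho_0}$, so $\|D^\alpha u\|_{L^2(K)}\le\|\Phi_{\rho_0,N}D^\alpha u\|_{n+1}\le\big(R/\rho_0^{s+n}\big)^{|\alpha|}(|\alpha|!)^s$ --- gives $u\in G^s(\Omega)$; with Proposition~\ref{+smooth} this also proves Theorem~\ref{th3}.

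The step I expect to be the main obstacle is the bookkeeping around Lemma~\ref{+Friedman}: one must pick the auxiliary sequence $\{M_j\}$ so that it simultaneously dominates the induction bounds on the derivatives of $u$, reproduces the Gevrey constant of $F$ through \reff{condition1}, and meets the monotonicity and growth hypotheses uniformly in $\rho$ and $N$; and one must verify that the constant $\mc C_5H_0^2$ produced by Lemma~\ref{+Friedman} depends only on $n$, the Gevrey constant of $F$, and $\|u\|_{C^{n+4}(\bar W)}$, so that a single $R$ works for all $N$. Everything else is a transcription of the linear argument of \S\ref{sect3+}.
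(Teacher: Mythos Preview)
Your approach is essentially the same as the paper's: induction on $N$, the analogue of Lemma~\ref{r0} for $r=0$, the splitting $\mc P\,\Phi_{\rho,N}D^\alpha u=[\mc P,\Phi_{\rho,N}D^\alpha]u+\Phi_{\rho,N}D^\alpha[F(\cdot,u(\cdot))]$ with the commutator handled exactly as in Steps~1--2 of Lemma~\ref{r1/3}, and an appeal to Lemma~\ref{+Friedman} for the composition term. The paper takes $H_0=\|u\|_{C^{n+3}(\bar W)}+1$, $H_1=R$, and the auxiliary sequence $M_0=1$, $M_j=\rho^{-(s+n)(j-1)}\big((j-1)!\big)^s\big((j+2)/\rho\big)^{s\delta/2}$ for $j\ge1$; your simpler choice $M_j\sim(j!)^s\rho^{-(s+n)j}$ would serve equally well at the first stage $\nu=-\delta/2$.

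One caveat on the bootstrap: you propose to multiply $M_j$ by the \emph{fixed} factor $(N/\rho)^{s(m-1)\delta/2}$ and assert that this ``changes none of its hypotheses.'' It does change one: the monotonicity constant $B_0$ in \reff{monotonicity} is replaced by $(N/\rho)^{s(m-1)\delta/2}B_0$, which is $N$-dependent. Since the constant $\mc C_5$ in Lemma~\ref{+Friedman} is produced through estimates that absorb $B_0$ into the threshold $H_1\ge(4^{n+2}\mc C_4H_0)^2$, this would force $R$ to depend on $N$ and break the induction. The paper avoids this by building a \emph{$j$-dependent} factor $((j+2)/\rho)^{s\delta/2}$ into $M_j$ from the outset and then checking directly (see \reff{328}, using $s-1\ge 1+s\delta/2$) that monotonicity holds with $B_0=\widetilde C_s$ depending only on $s$. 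This is precisely the bookkeeping you flag as the main obstacle, so you have the right picture; just note that a constant-in-$j$ rescaling of $\{M_j\}$ will not do.
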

\begin{rmk}
  Here the Gevrey constant $L$ of $u$ is  determined by the
Gevrey constants $B_a$ and $B_F$ of the functions $a, F$, and
depends only on $s, \sigma, n, \norm{u}_{H^{n+6}(W)}$ and
$\norm{a}_{C^{2n+2}(\Omega)}.$
\end{rmk}

\begin{proof}
We  prove the estimate $(E)_{r, N}'$ by induction on $N$.  We shall
follow the same procedure as that in  the proof of Proposition
\ref{prp4}. First, the truth of $(E)_{r,4}'$ can be deduced by the
same argument as that in  the proof of $(E)_{r,4}$ in the previous
section.

Let now $N>4$ and assume that $(E)_{r, N-1}'$ holds for any $r\in[0,
1]$.  We need to prove the truth of $(E)_{r,N}'$ for $0\leq r\leq
1.$  In the following discussion, we fix $N$  and for each
$0<\rho<1,$ define $\tilde\rho={{N-1}\over N}\rho, \:
\tilde{\tilde\rho}={{N-2}\over N}\rho.$  Let $\Phi_{\rho,N}$ be the
cutoff function which satisfies the property \reff {cutoff}.

First,  the same argument as the proof of Lemma \ref{r0} yields
\begin{equation}\label{r:0}
\|\Phi_{\rho,N}D^\alpha
u\|_{n+1}+\|\Phi_{\rho,N}{\widetilde\Lambda}^{\sigma} D^\alpha
u\|_{-{\delta\over2}+n+1}\leq \frac{ C_1
R^{|\alpha|-2}}{\rho^{(s+n)(|\alpha|-3)}}\big((|\alpha|-3)!\big)^{s},\quad
\forall\:\:0<\rho<1.
\end{equation}
Next we prove,  for all $r, 0<r\leq {\delta\over2},$
\begin{equation}\label{r-theta+}
  \|\Phi_{\rho,N}D^\alpha u\|_{r+n+1}+\|\Phi_{\rho,N}{\widetilde\Lambda}^{\sigma}
 D^\alpha u\|_{r-{\delta\over2}+n+1}
  \leq\frac{
 \mc C_{6}R^{|\alpha|-2}}{\rho^{(s+n)(|\alpha|-3)}}\big((|\alpha|-3)!\big)^{s}
     (N/\rho)^{sr}.
\end{equation}
Observe that  we need only to show the above inequality in the case
when $r={\delta\over2},$ that is
\begin{equation}\label{r-theta++}
  \|\Phi_{\rho, N}D^\alpha u\|_{{\delta\over2}+n+1}+\|\Phi_{\rho,
 N}{\widetilde\Lambda}^{\sigma} D^\alpha u\|_{n+1}
  \leq\frac{
 \mc C_{6}R^{|\alpha|-2}}{\rho^{(s+n)(|\alpha|-3)}}\big((|\alpha|-3)!\big)^{s}
     (N/\rho)^{{{s\delta}\over2}},
\end{equation}
and the truth of \reff{r-theta+} for general
$r\in]0,\,{\delta\over2}[$ follows by the interpolation inequality
\reff{interpolation}.

To prove \reff{r-theta++}, we first show the following inequality
\begin{equation}
\label{step3'}
 \|\p\Phi_{\rho,N}D^\alpha u\|_{-{\delta\over2}+n+1}
 \leq\frac{ \mc C_{7}R^{|\alpha|-2}}{\rho^{(s+n)(|\alpha|-3)}}
      \big((|\alpha|-3)!\big)^{s}(N/\rho)^{{{s\delta}\over2}}.
\end{equation}
In fact,
\begin{align*}
  \|\p\Phi_{\rho,N}D^\alpha u\|_{-{\delta\over2}+n+1}
  &\leq \|[\p,~\Phi_{\rho,N}D^\alpha]u\|_{-{\delta\over2}+n+1}
    +\|\Phi_{\rho,N}D^\alpha\p u\|_{-{\delta\over2}+n+1}\\
  &\leq \|[\p,~\Phi_{\rho,N}D^\alpha]u\|_{-{\delta\over2}+n+1}
  +\|\Phi_{\rho,N}D^\alpha[F(\cdot,u(\cdot))]\|_{-{\delta\over2}+n+1}.
\end{align*}
Since there is no nonlinear form involved in the first term  of the
right-hand  side of the above inequality,  the  same argument as in
the proof of \reff{step1} gives that
\begin{equation}\label{abcd+}
   \|[\p,~~ \Phi_{\rho,N}D^\alpha] u\|_{-{\delta\over2}+n+1}
    \leq\frac{\mc C_8 R^{|\alpha|-2}}{\rho^{(s+n)(|\alpha|-3)}}
    \big((|\alpha|-3)!\big)^{s}(N/\rho)^{{{s\delta}\over2}},
\end{equation}
Thus we need only to treat the second term
$\|\Phi_{\rho,N}D^\alpha[F(\cdot,u(\cdot))]\|_{-{\delta\over2}+n+1}$.
The smoothness of $u$ gives
\begin{equation}\label{++++100}
    \|D^ju\|_{ C^{n+3}(\bar W)}\leq
  \|u\|_{ C^{n+5}(\bar W)},
 \qquad 0
    \leq j\leq2,
\end{equation}
and by the induction hypothesis, for any $3\leq j<N$ and  any
$0<\rho<1, $
\begin{equation}\label{100}
\begin{split}
   \|\Phi_{\rho,j} D^\beta u\|_{ -{\delta\over2}+n+1}
   &\leq \| \Phi_{\rho,j}D^\beta u\|_{n+1}
   \leq \frac{
 C_{1}R^{j-2}}{\rho^{(s+n)(j-3)}}\big((j-3)!\big)^{s}\\
   &\leq\frac{
 C_{1}R^{j-2}}{\rho^{(s+n)(j-3)}}\big((j-3)!\big)^{s}
   (j/\rho)^{{{s\delta}\over2}},\qquad \forall ~\beta, \:\abs\beta=j,
\end{split}
\end{equation}
Similarly,  by \reff{r:0},  we have for any $0<\rho<1, $
\begin{equation}\label{++N}
  \|\Phi_{\rho,N} D^\alpha u\|_{ -{\delta\over2}+n+1}
  \leq\frac{
  C_{1}R^{N-2}}{\rho^{(s+n)(N-3)}}\big((N-3)!\big)^{s}
   (N/\rho)^{{{s\delta}\over2}},\qquad \forall ~\alpha,
   \:\abs\alpha=N.
\end{equation}
Since $F\in G^s(\bar \Omega\times [-M, M])$, then
\begin{equation}\label{200}
  \|D_{t,x,v}^k\partial_q^lF\|_{C^{n+2}(\bar \Omega\times
 [-M,
 M])}
  \leq  B_{F}^{k+l}\big((k-3)!\big)^{s}\big((l-3)!\big)^{s},
 \quad k,l\geq3.
\end{equation}
Define $M_j, H_0, H_1$ by setting
  \[
   H_1=R; \:\: H_0=\norm{u}_{C^{n+3}(\bar W)}+1;\:\: M_0=1; \:\:
M_j={{\big((j-1)!\big)^{s}}\over{\rho^{(s+n)(j-1)}}}
((j+2)/\rho)^{{{s\delta}\over2}},
     \:\:j\geq1.
  \]
We can choose $R$ large enough such that $H_1=R\geq (4^{n+1}B_F
H_0)^2.$ Then (\ref{++++100})-(\ref{200}) can be rewritten as
\begin{equation}\label{+a}
  \|D^ju\|_{ C^{n+3}(\bar W)}\leq H_0,\quad
 0\leq
 j\leq1,
\end{equation}
\begin{equation}\label{+b}
  \| \Phi_{\rho,j}D^\gamma u\|_{ -{\delta\over2}+n+1}
  \leq H_0H_1^{j-2}M_{j-2},\quad\forall~0<\rho<1,~\forall~\abs\gamma=j,\quad 2\leq j\leq N,
\end{equation}
\begin{equation}\label{+c}
\|D_{t,x,v}^k\partial_q^lF\|_{C^{n+2}(\bar \Omega\times [-M, M])}
  \leq  B_{F}^{k+l}M_{k-2}M_{l-2},\quad
k,l\geq2.
\end{equation}
For each $j$, note that $s\geq{2\over\delta}.$ Hence we compute
\begin{align}\label{328}
\begin{split}
  \frac{j!}{i!(j-i)!}M_iM_{j-i}
  &=\frac{j!}{i(j-i)}\big((i-1)!\big)^{s-1}\big((j-i-1)!\big)^{s-1}
     \rho^{-(s+n)(i-1)}\rho^{-(s+n)(j-i-1)}\\
  &\indent\times(i+2)^{{{s\delta}\over2}}(j-i+2)^{{{s\delta}\over2}}\rho^{-s\delta}\\
  &\leq{j!}\big((j-2)!\big)^{s-1}\rho^{-(s+n)(j-2)}
(j+2)^{{{s\delta}\over2}}(j+2)^{{{s\delta}\over2}}\rho^{-s\delta}\\
&\leq\frac{j(j+2)^{{{s\delta}\over2}}}{(j-1)^{s-1}}\big((j-1)!\big)^{s}\rho^{-(s+n)(j-1)}
(j+2)^{{{s\delta}\over2}}\rho^{-{{s\delta}\over2}}\rho^{s+n-{{s\delta}\over2}}\\
  &\leq\frac{j(j+2)^{{{s\delta}\over2}}}{(j-1)^{s-1}}M_j\\
  &\leq\widetilde C_s M_j.
\end{split}
\end{align}
In the last inequality we used the fact that $s-1\geq
1+{{s\delta}\over2},$ where $\widetilde C_s $ is a constant
depending only on $s.$  Moreover, it is easy to verify that, $
M_j\geq {\rho}^{-(s+n)(j-1)}\geq
  \rho^{-j}$ for each $j\geq 2,$ and
\begin{align*}
  \nrho^{n+2}M_{N-n-2}&=\nrho^{n+2}{{\big((N-n-3)!\big)^{s}}\over{\rho^{(s+n)(N-n-3)}}}
  ((N-n)/\rho)^{{{s\delta}\over2}}\\
  &\leq \mc C_n {{\big((N-1)!\big)^{s}}\over{\rho^{(s+n)(N-1)}}}
  ((N+2)/\rho)^{{{s\delta}\over2}}=\mc C_n M_{N-2}.
\end{align*}
Thus $\set{M_j}$ satisfies the monotonicity condition
(\ref{monotonicity}) and  the condition \reff{++}. By virtue of
(\ref{+a})-(\ref{328}), we can use Lemma \ref{+Friedman} with $\nu=
-{\delta\over2}>-{\frac 1 2} $ to obtain
\begin{align*}
  \|\Phi_{\rho,N}D^\alpha[F(\cdot,u(\cdot))]\|_{ -{\delta\over2}+n+1}
  &\leq  \mc C_{5}H_0^2H_1^{|\alpha|-2}M_{|\alpha|-2}\\
  &\leq  2\mc C_{5}\inner{1+\norm
 u_{C^{n+3}(\bar W)}^2}\frac{R^{|\alpha|-2}}{\rho^{(s+n)(|\alpha|-3)}}
    \big((|\alpha|-3)!\big)^{s}(N/\rho)^{{{s\delta}\over2}}.
\end{align*}
This along with \reff{abcd+} yields \reff{step3'}, if we choose $\mc
C_7= \mc C_8+2\mc C_{5}\inner{1+\norm u_{C^{n+3}(\bar W)}^2}$. By
virtue of \reff{step3'}, we can repeat the discussion as in Step 4
in the previous section. This gives  \reff{r-theta++}, and hence
\reff{r-theta+}.

Similarly,  we can prove that for any $r$ with ${\delta\over2}\leq
r\leq\delta$,
\[
  \|D^\alpha
  u\|_{r+n+1,\Omega_\rho}+\|{\widetilde\Lambda}^{\sigma} D^\alpha
  u\|_{r-{\delta\over2}+n+1,\Omega_\rho}\leq
  \frac{\mc C_{9}R^{|\alpha|-2}}{\rho^{(s+n)(|\alpha|-3)}}
  \big((|\alpha|-3)!\big)^{s}(N/\rho)^{sr}.
\]
Inductively, for any $m\in\Natural$ with
${m\delta\over2}<1+{\delta\over2},$ the above inequality still holds
for any $r$ with ${{(m-1)\delta}\over2}\leq r\leq
{{m\delta}\over2}.$ Hence, for  $r$ with $0\leq r\leq 1,$  we obtain
the truth of $(E)_{r,N}'$. This completes the proof of Proposition
\ref{prp4'}.
\end{proof}



\begin{thebibliography}{99}

\bibitem{aumxy} R.Alexandre, Y.Morimoto, S.Ukai, C.-J.Xu, T.Yang,
 Uncertainty principle and kinetic equations,  {\em J. Funct. Anal.}
 {\bf 255}  (2008), 2013-2066

\bibitem{al-1} R. Alexandre, L. Desvillettes, C. Villani, B. Wennberg,
Entropy  dissipation and long-range interactions, {\em Arch.
Rational Mech. Anal.} {\bf 152} (2000) 327-355.

\bibitem{al-2} R. Alexandre, M. Safadi, Littlewood Paley decomposition
 and regularity issues in Boltzmann equation homogeneous equations. I.
Non cutoff and Maxwell cases, {\em Math. Models Methods Appl. Sci.}
{\bf 15} (2005) 907-920.

\bibitem{Bou}\label{Bou}F. Bouchut,~
Hypoelliptic regularity in kinetic equations. {\em J. Math. Pure
Appl.} {\bf 81} (2002), 1135-1159.

\bibitem{clx} H. Chen,  W.-X. Li, C.-J. Xu,
Gevrey hypoellipticity for linear and non-linear Fokker-Planck
equations, {\em J. Diff. Equ.} {\bf 246} (2009), 320-339.

\bibitem{CR} H. Chen,  L.Rodino,
General theory of PDE and Gevrey class. {\sl General theory of
partial differential equations and microlocal analysis}(Trieste
1995), Pitman Res. Notes in Math. Ser., {\bf 349}, Longman, Harlow,
6-81, (1996).

\bibitem{DZ}\label{DZ}M.Derridj,   C.Zuily,
Sur la r\'{e}gularit\'{e} Gevrey des op\'{e}rateurs de
H\"{o}rmander. {\em J. Math. Pure Appl.} {\bf 52} (1973), 309-336.

\bibitem{desv-wen1} L. Desvillettes, B. Wennberg, Smoothness of the
 solution of the spatially homogeneous Boltzmann equation without cutoff. {\em
Comm. Partial Differential Equations} {\bf 29} (2004), no. 1-2,
133--155.

\bibitem{DFT}\label{DFT} L. Desvillettes, G. Furioli and E. Terraneo,
Propagation of Gevrey regularity for solutions of Boltzmann equation
for Maxwellian molecules, {\em Trans. Amer. Math. Soc.} {\bf 361}
(2009) 1731-1747.

\bibitem{Dur} \label{Dur}M. Durand,
R\'{e}gularit\'{e} Gevrey d'une classe d'op\'{e}rateurs
hypo-elliptiques. {\em J. Math. Pure Appl.} {\bf 57} (1978),
323-360.

\bibitem{Friedman} A.Friedman,
On the Regularity of the solutions of Non-linear Elliptic and
Parabolic Systems of Partial Differential Equations. {\em J. Math.
Mech}. {\bf 7} (1958), 43-59.

\bibitem{helffer-nier} B. Helffer, F. Nier, Hypoelliptic estimates and
spectral theory for Fokker-Planck operators and Witten Laplacians.
{\em Lecture Notes in Mathematics}, {\bf 1862} Springer-Verlag,
Berlin, 2005.

\bibitem{herau-nier}F. H\'erau, F. Nier, Isotropic hypoellipticity and
trend to equilibrium for the Fokker-Planck equation with a
high-degree potential. {\em Arch. Ration. Mech. Anal.} {\bf 171}
(2004), no. 2, 151--218.

\bibitem{Kohn}  \label{Kohn}J.Kohn,
Lectures on degenerate elliptic problems. Psedodifferential
operators with applications, C.I.M.E., Bressanone 1977,
89-151(1978).

\bibitem{MX}\label{MX}Y.Morimoto, C.-J. Xu,
Hypoellipticity for a class of kinetic equations. {\em J. Math.
Kyoto Univ} {\bf 47} (2007), 129--152.

\bibitem{MX2}Y. Morimoto and C.-J. Xu, Ultra-analytic effect of Cauchy problem
for a class of kinetic equations, {\em to appear in "Journ. Diff.
Equ."}

\bibitem{MUXY1} Y.Morimoto, S.Ukai, C.-J.Xu, T.Yang, Regularity of
solutions to the spatially homogeneous Boltzmann equation without
Angular cutoff, {\em Discrete and Continuous Dynamical Systems -
Series A} {\bf 24} (2009) 187-212.

\bibitem{Ro} \label{Ro} L.Rodino,
{\sl Linear partial differential operators in Gevrey class.}  World
Scientific, Singapore, 1993.

\bibitem {Tr}\label{Treves} F.Treves,
{\sl Introduction to Pseudodifferential and Fourier Integral
Operators.} Plenum, New York, 1980.

\bibitem{ukai}S. Ukai, Local solutions in Gevrey classes to the
 nonlinear Boltzmann equation
without cutoff, {\em Japan J. Appl. Math.} {\bf 1}(1984), no. 1,
141--156.

\bibitem{villani}C. Villani, On a new class of weak solutions to the
 spatially
homogeneous Boltzmann and Landau equations, {\em Arc. Rational Mech.
Anal., } {\bf 143}, 273--307, (1998).

\bibitem{Xu} C.-J. Xu,  Nonlinear microlocal analysis. {\sl General
 theory of partial differential equations and microlocal analysis}(Trieste
1995), Pitman Res. Notes in Math. Ser., {\bf 349}, Longman, Harlow,
155-182, (1996).
\end{thebibliography}
\end{document}